\documentclass{amsart}
\usepackage{amsthm, graphicx, graphics, latexsym}
\usepackage{psfrag}
\usepackage{multirow}

\newtheorem{theorem}{Theorem}[section]
\newtheorem{example}{Example}[section]
\newtheorem{proposition}{Proposition}[section]
\newtheorem{definition}[theorem]{Definition}

\newtheorem{conjecture}{Conjecture}[section]
\newtheorem{lemma}{Lemma}[section]
\newtheorem{corollary}{Corollary}[section]

\title{On the shape of subword complexity sequences of finite words}
\author{Hannah Vogel\textsuperscript{1}}

\begin{document}
\maketitle
\footnotetext[1]{Carnegie Mellon University, Masters Thesis}


\section{Introduction}

Let $\mathcal{A}$ be a set. We will call $\mathcal{A}$  an alphabet and the elements of $\mathcal{A}$ letters. An alphabet $\mathcal{A}$ is finite if it has a finite number of letters. We will only be considering finite alphabets. By $\mathcal{A}_k$ we denote an alphabet with $k$ letters.

\begin{definition}
A word $w$ over an alphabet $\mathcal{A}$ is a sequence of letters $w=w_1 w_2 \ldots w_n$ such that $ w_i \in \mathcal{A}$ for all $i=1,\ldots,n$. The reverse word, denoted by $w^{-1}$, is $w^{-1}=w_n w_{n-1}\ldots w_1$. The length of $w$, denoted by $\left|w\right|$, is $n$.
\end{definition}

If $u$ and $v$ are words, $uv$ denotes their concatenation. For a positive integer $n$, $u^n= uuu...u$ ($n$ times). By $\epsilon$ we denote the only word of length 0, the \textit{empty word}. $\mathcal{A}^n$ is the set of all words over $\mathcal{A}$ of length $n$. $\mathcal{A}^+ = \cup_{n \ge 1} \mathcal{A}^n $ is a free semigroup (an associative set with a binary operation) over $\mathcal{A}$ with the group operation being string concatenation. We define $\mathcal{A}^*=\mathcal{A}^+ \cup \{ \epsilon \}$. 

\begin{definition}
A word $u$ is a subword of a word $w$ if there exist $p, q \in \mathcal{A}^*$ such that $w = puq$. Equivalently, we say that a word $u$ is a subword (factor) of a word $w=w_1w_2\ldots w_n$ if there exist integers $i, j \in \mathbb{N}$ with $0 < i \le j$ such that $u=w_i w_{i+1} \ldots w_j$. We denote this occurrence of $u$ in $w$ by $w[i,j]$.
\end{definition}

\begin{example}
If $w=0110101110$, then $u=1010$ is a subword of $w$, but $v=1001$ is not a subword of $w$.
\end{example}

\begin{definition}
Let $w$ be a word. We define $Sub_w(m)$ to be the set of all subwords of length $m$ of $w$. We define $Sub(w)$ to be the set of all subwords of $w$. \\
\end{definition}

\begin{example}
Let $w=2110$. Then
\begin{align*}
 Sub_w(0) & = \{ \epsilon \}; \\
 Sub_w(1) & = \{0, 1, 2\}; \\
 Sub_w(2) & = \{10, 11, 21\}; \\
 Sub_w(3) & = \{110, 211\}; \\
 Sub_w(4) & = \{ 2110 \}; \\ 
 Sub(w) & = \{ \epsilon, 0, 1, 2, 10, 11, 21, 110, 211, 2110\}. 
\end{align*}
\end{example}

\begin{definition}
An integer $p \ge1$ is a period of a word $w=a_1a_2 \ldots a_n$, where $a_i \in \mathcal{A}$, if $a_i = a_{i+p}$ for $i=1 \ldots n-p$. If no such $p$ exists then we say $w$ is aperiodic.
\end{definition}

In this paper, we will be focusing on the subword complexity sequences of words. The subword complexity of a word $w$ is a function that assigns for each positive integer $n$,  the number of distinct subwords of length $n$ in $w$, $p_w(n)$.

\begin{definition}
Given a word $w$ of length $N$ over $\mathcal{A}_k$, the subword complexity function of $w$, $p_w(n)$, counts the number of distinct subwords of length $n$ in $w$. The subword complexity sequence of $w$ is the sequence $p_w=(p_w(1), p_w(2), \ldots, p_w(N))$.
\label{subcomp:def}
\end{definition}

\begin{example}
Let $w=01101$. Then
\begin{align*}
& Sub_w(1) = \{ 0, 1 \} & \left|Sub_w(1)\right| = 2 \\
& Sub_w(2) = \{01, 10, 11 \} & \left|Sub_w(2)\right| = 3 \\
& Sub_w(3) = \{ 011, 101, 110 \} & \left|Sub_w(3)\right| = 3 \\
& Sub_w(4) = \{0110, 1101\} & \left|Sub_w(4)\right| = 2 \\
& Sub_w(5) = \{ 01101 \} & \left|Sub_w(5)\right| = 1
\end{align*}

\par so $p_w = (2, 3, 3, 2, 1)$.\\
\end{example}

The subword complexity of a word is a good measure of the randomness of the word. The randomness of a word is dependent not only on the number of distinct letters in the word, but also how they are positioned. For example, periodic words are of low randomness, and have low subword complexity. Aperiodic words have higher randomness, and subword complexity, than periodic words, but there are varying degrees of randomness in aperiodic words. The shape of the subword complexity sequence of a word gives insight to what the word itself looks like. For example, consider $$p_w= (1,1,1,1,1,1).$$ Then, without loss of generality, we know the word of length $6$ is $w=000000$.

There are no known necessary and sufficient conditions for which sequences of numbers are subword complexity sequences. For a list of necessary conditions and a list of sufficient conditions, see Ferenczi \cite{F:1999}. Subword complexity sequences of finite and infinite words have become an important area of research in the combinatorics of words. Applications of subword complexity sequences include Computer Science, Algebra, and Biology.

We will restrict our attention mainly to subword complexity sequences of finite words. In Section 3 we will discuss subword complexity sequences in more detail. For further reading on subword complexity, see Anisiu and Cassaigne \cite{AC:2004}, Allouche \cite{A:1994}. Section 4 will discuss de Bruijn words, which have maximal subword complexity. For more on de Bruijn words, see Anisiu, Blazsik and Kasa \cite{ABK:2010}, Chan, Games and Key \cite{CGK:1982}, Matou\v{s}ek and Ne\v{s}et\v{r}il \cite{MN:2009}. In Sections 5 and 6 we will discuss Sturmian words, which have minimal subword complexity for non-ultimately periodic words. For further reading on Sturmian words, see Allouche and Shallit \cite{AS:2003}, de Luca \cite{AL:1997}, de Luca and de Luca \cite{dLdL:1994}, Fogg \cite{PF:2002}, Lothaire \cite{L:2002}, Matom\"aki and Saari \cite{KS:2012}, Vuillon \cite{V:2003}.

\bigskip

\section{Preliminaries}

\begin{definition}
For two words $u$ and $w$ in $\mathcal{A}^*$, we say that $u$ is a prefix of $w$ if there exists a word $q$ such that $w=uq$. We denote the set of prefixes of a word $w$ by $Pref_w$.
\end{definition}

\begin{definition}
For two words $u$ and $w$ in $\mathcal{A}^*$, we say that $u$ is a suffix of $w$ if there exists a word $p$ such that $w=pu$. We denote the set of suffixes of a word $w$ by $Suf_w$.
\end{definition}

\begin{example}
Let $w=0110101110$. Then $u=01110$ is a suffix of $w$ of length 5, and $v=011010$ is a prefix of $w$ of length 6.
\end{example}

\begin{definition}
The multiplicity of a subword $u$ of $w$ is the number of occurrences of  $u$ in $w$.
\end{definition}

\begin{example}
Let $w=01101100$. Then the multiplicity of $u=0110$ in $w$ is $2$. The two occurrences of $u$ in $w$ are $w[1,4]$ and $w[4,7]$.
\end{example}

For a subword $u$ of $w$ we consider the maximal subset $R_u$ of $\mathcal{A}$ such that
\[
uR_u \subseteq Sub(w)
\]
so that $u$ occurs in $w$ followed on the right by any one of the letters in $R_u$, and only by letters in $R_u$. 

In a symmetric way we can define the left maximal subset $L_u$ of $\mathcal{A}$ such that 
\[
L_uu \subseteq Sub(w)
\]
so that $u$ occurs in $w$ preceded on the left by any one of the letters in $L_u$ and only by letters in $L_u$. 

\begin{definition}
 A subword $u$ has \text{valence }$k$ if it can be extended on the right in $w$ by exactly $k$ distinct letters. 
\end{definition}

Note that $\left|R_u\right|$ is the valence of $u$ in $w$.

\begin{example}
Let $w=1211210121122$, and consider $u=121$. We have that

\begin{align*}
&R_u = \{0, 1\} \\
&uR_u = \{1210, 1211\}
\end{align*}

the multiplicity of $u$ in $w$ is 3, but the valence of $u$ in $w$ is $\left|R_u\right|= 2$.

\end{example}

 In this example  $u$ has valence $2$ in $w$ because it can be extended on the right by the two letters $0$ and $1$. The subword $v=122$ in the example above has valence $0$ because it is not followed by any letters in $w$.

\begin{definition}
A nonempty subword $u$ of $w$ is called \textit{special} if it has valence $\ge 2$. This implies that there exist at least two letter $p,q \in \mathcal{A}$, $p \neq q$ such that $up, uq \in Sub(w)$. So all special subwords of a word have a valence of at least $2$. 
\end{definition}

\begin{example}
Let $w= 011010$. Then the set of special subwords of $w$ is $\{1, 01\}$. \\
\end{example}

\begin{definition}
Let $s_w(n,i)$, or just $s(n,i)$ when there is no ambiguity, be the number of distinct subwords of length $n$ in $w$ that have valence $i$. Let $K_w$ be the minimal length of a suffix that occurs only once in $w$. Let $R_w$ be the minimum $n$ such that all the subwords of $w$ of length $n$ have valence at most $1$.
\end{definition}

\begin{example}
Let $w=101100$. Then
\[Sub(w) = \{ \epsilon, 0,1, 00, 01, 10, 11, 100, 101, 110, 011, 0110, 1011, 1100, 01011, 10110, 101100\}\]
The subword structure of $w$ can be represented as a tree in which the subwords of $w$ are the tree nodes, and there is an edge between a "parent" node, a subword of length $n$, and a "child" node, a subword of length $n+1$, if the parent subword is a prefix of the child subword in $w$ :

 \begin{figure}[ht]
\begin{center}
\psfragscanon
\psfrag{e}{$\epsilon$}
\psfrag{0 }{$0$}
\psfrag{1}{$1$}
\psfrag{00}{$00$}
\psfrag{01}{$01$}
\psfrag{10}{$10$}
\psfrag{11}{$11$}
\psfrag{011}{$011$}
\psfrag{100}{$100$}
\psfrag{101}{$101$}
\psfrag{110}{$110$}
\psfrag{0110}{$0110$}
\psfrag{1011}{$1011$}
\psfrag{1100}{$1100$}
\psfrag{01100}{$01100$}
\psfrag{10110}{$10110$}
\psfrag{101100}{$101100$}
\includegraphics[scale=1]{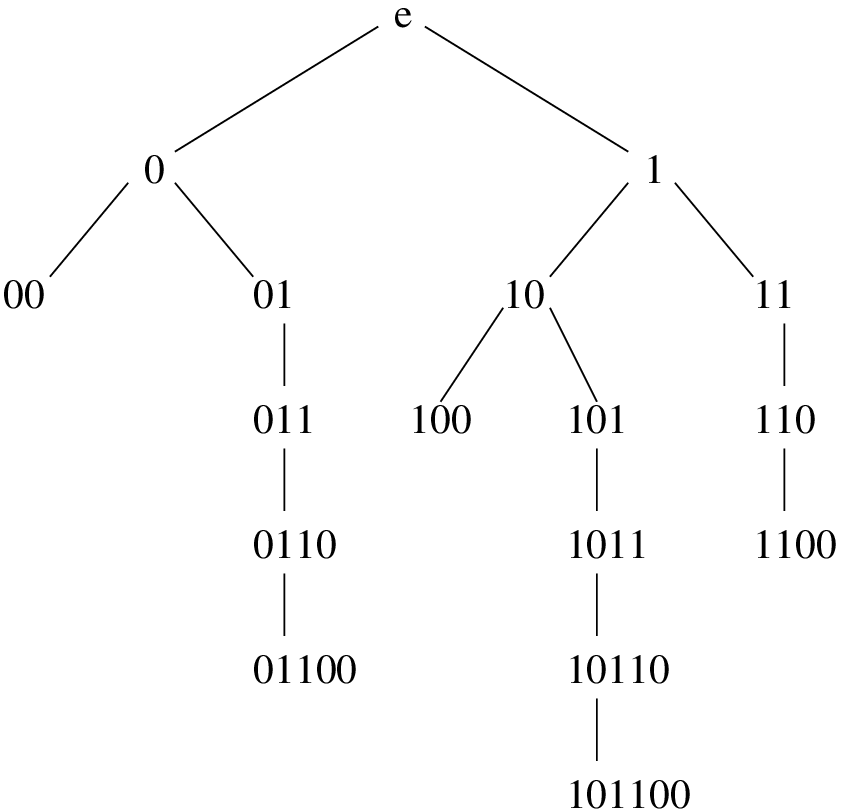}
\end{center}
\end{figure}

So for example, 0 is a parent of 00 and 01. 
We consider $\epsilon$  to be in the zeroth row. We can see that $R_w = 3$ because the third row is the earliest row where each parent has at most one child. This is the same as saying that every subword of length 3 has multiplicity 1. Now $K_w =2$, which we can see because in the second row, $00$ does not have any descendants, and this corresponds to the suffix of $w$ having multiplicity 1.
\end{example}

Note that for any word $w$, $s(n,0)=0$ for $0 \le n \le K_w -1$. This is because any subword of $w$ of length $ < K_w$ is followed by at least one letter in $w$. Also, $s(n,0)=1$ for $K_w \le n \le \left|w\right|$, because for any $n$, $K_w \le n \le \left|w\right|$, only one subword of $w$ of length $n$ is not followed by any letter in $w$, the suffix of $w$ of length $n$.

Thus we have that $s(n,0) \le 1$ for all $0 \le n \le \left|w\right|$ and the number of subwords of $w$ that are not followed by any letter in $w$ is $\left|w\right| - K_w + 1$.

Note that $s_w(\left|w\right| - 1,2) = s_w(\left|w\right|,2) = 0$ and thus $R_w$ is always defined for finite words.


\bigskip
\section{Subword Complexity}

Recall from Definition \ref{subcomp:def} that a subword complexity function of $w$, $p_w(n)$, counts the number of distinct subwords of length $n$ in $w$. There are several interesting open problems involving the subword complexity of finite words:
\begin{itemize}
\item Which finite sequences of natural numbers are subword complexity sequences?
\item How many distinct subword complexity sequences of words of length $N$ over $\mathcal{A}_k$ are there? 
\item How many words of length $N$ over $\mathcal{A}_k$ have exactly $m$ distinct subwords of length $l$?
\end{itemize} 

As we mentioned earlier, the subword complexity of a word is of interest because in some sense it measures the randomness of a word. 
 
In this section we give several necessary conditions for a function to be a subword complexity function (Propositions \ref{3.1}, \ref{card:prop}, Theorems \ref{min:th}, \ref{mineq:th}, \ref{main}, and \ref{kineq:th}). Additional information about the shape of a subword complexity function is given by Propositions \ref{Nmax:prop} and \ref{R+K}.
 
\begin{example}
The subword complexity sequences of all binary words of length 3:
\begin{align*}
& w= 000 & p_w= (1,1,1) \\
& w= 100 & p_w= (2,2,1) \\
& w= 101 & p_w= (2,2,1) \\
& w= 110 & p_w= (2,2,1) \\
& w= 011 & p_w= (2,2,1) \\
& w= 010 & p_w= (2,2,1) \\
& w= 001 & p_w= (2,2,1) \\
& w= 111 & p_w= (1,1,1) 
\end{align*}

Note that there are only two distinct subword complexity sequences of binary words of length 3.
\end{example}

\begin{proposition}
Let $w$ be a word. Then for integers $m, n \ge 0$ we have that $p_w(m+n) \le p_w(m)p_w(n)$.
\label{3.1}
\end{proposition}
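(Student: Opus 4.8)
The plan is to exhibit an injection from $Sub_w(m+n)$ into the Cartesian product $Sub_w(m) \times Sub_w(n)$; once such a map is in hand, comparing cardinalities immediately yields $p_w(m+n) = \left|Sub_w(m+n)\right| \le \left|Sub_w(m)\right|\left|Sub_w(n)\right| = p_w(m)p_w(n)$. The whole proof is really just the observation that every length-$(m+n)$ subword splits canonically into a length-$m$ prefix and a length-$n$ suffix, each of which is again a subword of $w$.

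First I would dispose of the degenerate cases. If $m+n > \left|w\right|$, then $Sub_w(m+n)$ is empty, so $p_w(m+n)=0$ and the inequality holds trivially. If either $m=0$ or $n=0$, say $m=0$, then $Sub_w(0)=\{\epsilon\}$ gives $p_w(0)=1$, and the claim reduces to $p_w(n) \le p_w(n)$. So I may henceforth assume $m,n \ge 1$ and $m+n \le \left|w\right|$.

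For the main case, I take an arbitrary $u \in Sub_w(m+n)$. By the definition of subword there exist $p,q \in \mathcal{A}^*$ with $w = puq$. I write $u = u'u''$, where $u'$ is the prefix of $u$ of length $m$ and $u''$ is the suffix of $u$ of length $n$. Then $w = p\,u'\,(u''q)$ exhibits $u'$ as a subword of $w$, so $u' \in Sub_w(m)$, and $w = (pu')\,u''\,q$ exhibits $u''$ as a subword of $w$, so $u'' \in Sub_w(n)$. Hence the assignment $u \mapsto (u',u'')$ defines a map $\phi : Sub_w(m+n) \to Sub_w(m) \times Sub_w(n)$. This map is injective, since $u$ is recovered from its image by concatenation: if $\phi(u_1)=\phi(u_2)=(u',u'')$, then $u_1 = u'u'' = u_2$.

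There is no serious obstacle here, as the construction is just the splitting of a word at position $m$. The only points that require any care are the edge cases handled above and the verification that the prefix $u'$ and suffix $u''$ of a subword are themselves subwords of $w$, which follows directly from the factorization $w = puq$ supplied by the definition of subword.
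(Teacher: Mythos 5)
Your proof is correct and follows essentially the same route as the paper: the paper's one-line argument is precisely the observation that every subword of length $m+n$ factors as a length-$m$ subword followed by a length-$n$ subword, which is the injection $u \mapsto (u', u'')$ you spell out. Your version simply makes the injectivity and the edge cases ($m+n > \left|w\right|$, $m=0$ or $n=0$) explicit, which the paper leaves implicit.
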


\begin{proof}
We can express every subword of length $m+n$ as a subword of length $m$ followed by a subword of length $n$, so there are at most $p_w(m)p_w(n)$ subwords of length $n+m$.
\end{proof} 

\begin{corollary}
Let $w$ be a word of length $N$ over $\mathcal{A}_k$. Then for $1 \le n \le N-1$, $p_w(n+1) \le k\,p_w(n)$.
\end{corollary}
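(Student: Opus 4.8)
The plan is to derive this corollary directly from Proposition \ref{3.1}. The key observation is that a word over $\mathcal{A}_k$ uses at most $k$ distinct letters, so the number of subwords of length $1$ is bounded by the size of the alphabet: we have $p_w(1) \le k$. This is the only fact about the alphabet size we will need.

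First I would apply Proposition \ref{3.1} with the choices $m = n$ and $n$ replaced by $1$; that is, I would instantiate the inequality $p_w(a+b) \le p_w(a)\,p_w(b)$ at $a = n$ and $b = 1$. This yields $p_w(n+1) \le p_w(n)\,p_w(1)$ for every $n$ in the range $1 \le n \le N-1$. Next I would substitute the bound $p_w(1) \le k$ into this inequality to conclude $p_w(n+1) \le k\,p_w(n)$, which is exactly the claimed statement.

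The argument is essentially immediate once Proposition \ref{3.1} is in hand, so there is no substantial obstacle. The only point requiring a moment's care is justifying $p_w(1) \le k$: since every length-$1$ subword of $w$ is a single letter of $\mathcal{A}_k$, and $\mathcal{A}_k$ has exactly $k$ letters, there can be at most $k$ distinct subwords of length $1$. I would also note in passing that the range restriction $1 \le n \le N-1$ simply ensures that $p_w(n+1)$ is defined, since $w$ has length $N$ and hence no subwords of length exceeding $N$.
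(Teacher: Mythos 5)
Your proof is correct and is exactly the intended argument: the paper states this corollary immediately after Proposition \ref{3.1} without a separate proof, precisely because it follows by taking $m=n$, $n=1$ there and using $p_w(1) \le k$. Your added justification of $p_w(1) \le k$ (each length-$1$ subword is a letter of $\mathcal{A}_k$) fills in the one small detail the paper leaves implicit.
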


\begin{proposition}
Let $w$ be a word over $\mathcal{A}_k$. Then $1 \le p_w(n) \le k^n$ for $1 \le n \le \left|w\right|$. 
\label{card:prop}
\end{proposition}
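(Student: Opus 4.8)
The plan is to establish the two inequalities separately, each by a short direct counting argument, since the statement splits cleanly into a lower bound and an upper bound. For the lower bound $p_w(n) \ge 1$, I would use the hypothesis $n \le \left|w\right|$: because the length of $w$ is at least $n$, the word $w$ contains at least one subword of length $n$, namely its prefix $w[1,n]$. Hence $Sub_w(n)$ is nonempty, and since $p_w(n) = \left|Sub_w(n)\right|$ by Definition \ref{subcomp:def}, we conclude $p_w(n) \ge 1$.

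For the upper bound $p_w(n) \le k^n$, the key observation is that every subword of length $n$ of $w$ is in particular a word of length $n$ over $\mathcal{A}_k$, so that $Sub_w(n) \subseteq \mathcal{A}_k^n$. A straightforward product count shows that the set of all strings of length $n$ over a $k$-letter alphabet has exactly $\left|\mathcal{A}_k^n\right| = k^n$ elements. Monotonicity of cardinality under inclusion then yields $p_w(n) = \left|Sub_w(n)\right| \le \left|\mathcal{A}_k^n\right| = k^n$, completing the bound.

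I do not expect any genuine obstacle, as both bounds reduce to elementary counting; the only point requiring care is the role of the range $1 \le n \le \left|w\right|$, since the prefix argument for the lower bound is valid precisely because $n \le \left|w\right|$. I note that the upper bound could alternatively be obtained by induction from the Corollary above (using $p_w(n+1) \le k\,p_w(n)$) together with the base case $p_w(1) \le k$, but the direct counting argument is cleaner and self-contained, so I would present that one.
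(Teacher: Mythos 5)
Your proof is correct and follows essentially the same route as the paper: the paper's proof is a one-line counting argument noting that there are only $k^n$ words of length $n$ over $\mathcal{A}_k$ (leaving the lower bound implicit), and your write-up simply fills in the details — the explicit inclusion $Sub_w(n) \subseteq \mathcal{A}_k^n$ for the upper bound and the prefix $w[1,n]$ for the lower bound. No substantive difference in approach, only in the level of detail.
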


\begin{proof}
There are only $k^n$ words of length $n$ over $\mathcal{A}_k$, thus$1 \le p_w(n) \le k^n$ for $1 \le n \le \left|w\right|$. 
\end{proof}

So if we consider the binary alphabet $\mathcal{A}_2 = \{0,1 \}$, then $p_w(n) \le 2^n$ for all $n$. 

\begin{theorem}
Let $w$ be a word over the alphabet $\mathcal{A}_k$, $\left|w\right| = N$. Then $p_w(n) \le \min\{k^n, N-n+1\}$ for $1 \le n \le N$.
\label{min:th}
\end{theorem}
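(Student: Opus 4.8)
The plan is to establish the bound $p_w(n) \le \min\{k^n, N-n+1\}$ by proving the two inequalities $p_w(n) \le k^n$ and $p_w(n) \le N-n+1$ separately, since the minimum of the two is automatically an upper bound once each individual bound is known. The first of these, $p_w(n) \le k^n$, is already available to me: it is exactly the content of Proposition \ref{card:prop}, which I may invoke directly. So the real work lies entirely in the second inequality, $p_w(n) \le N - n + 1$.

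For the bound $p_w(n) \le N - n + 1$, the key observation is a simple counting argument on the positions where a length-$n$ subword can begin inside $w$. First I would note that every subword of length $n$ occurring in $w$ has the form $w[i, i+n-1]$ for some starting index $i$. Since $w$ has length $N$, the occurrence $w[i, i+n-1]$ fits inside $w$ precisely when $1 \le i$ and $i + n - 1 \le N$, that is, when $1 \le i \le N - n + 1$. Thus there are exactly $N - n + 1$ possible starting positions, and each starting position determines a single length-$n$ subword. Therefore the total number of occurrences of length-$n$ subwords is $N - n + 1$, and the number of \emph{distinct} such subwords, namely $p_w(n) = \left|Sub_w(n)\right|$, can only be smaller (distinct subwords may occur at multiple positions, so the map from positions to subwords is surjective onto $Sub_w(n)$ but need not be injective). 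This yields $p_w(n) \le N - n + 1$.

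Combining the two bounds gives $p_w(n) \le k^n$ and $p_w(n) \le N - n + 1$ simultaneously, hence $p_w(n) \le \min\{k^n, N-n+1\}$ for all $1 \le n \le N$, as claimed. I do not expect any genuine obstacle here, since both halves reduce to elementary counting; the only point requiring a moment's care is the boundary arithmetic in the range of valid starting indices $i$, to confirm that the count is $N - n + 1$ rather than an off-by-one variant, and to check the edge cases $n = 1$ (giving $N$ starting positions) and $n = N$ (giving a single position, the whole word). The heart of the argument is simply recognizing that distinct subwords cannot outnumber their occurrence positions.
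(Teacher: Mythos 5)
Your proof is correct and follows essentially the same route as the paper: it invokes Proposition \ref{card:prop} for the bound $p_w(n) \le k^n$ and then counts the $N-n+1$ starting positions of length-$n$ occurrences in $w$ to get $p_w(n) \le N-n+1$. Your write-up is merely a more explicit version of the paper's position-counting argument, with the index arithmetic spelled out.
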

\begin{proof}
By Proposition 3.2 we have that $p_w(n) \le k^n$. We need to show that $p_w(n) \le N-n+1$. Let $w$ be a word of length $N$. Then there are $N-n+1$ not necessarily distinct subwords (contiguous blocks of letters) of length $n$ in $w$. So we have at most $N-n+1$ distinct subwords of length $n$. Thus $p_w(n) \le N-n+1$, so $p_w(n) \le \min\{k^n, N-n+1\}$.\\
\end{proof}

The following theorem is going to be proved in Section 4.

\begin{theorem}
For every $n \ge 0$, there exists a word $w$ of length $N$ over $\mathcal{A}_k$ with subword complexity $p_w(n)= \min\{k^n, N-n+1\}$ for $1 \le n \le N$. It follows that for every word $u$ of length $N$ over $\mathcal{A}_k$ we have that $p_u(n) \le p_w(n)$ for all $1 \le n \le N$.
\label{mineq:th}
\end{theorem}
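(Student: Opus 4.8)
The upper-bound half of this statement is already in hand: Theorem~\ref{min:th} gives $p_u(n)\le\min\{k^n,N-n+1\}$ for \emph{every} word $u$ of length $N$, so as soon as I exhibit one word $w$ of length $N$ that attains this value for all $n$, the inequality $p_u(n)\le p_w(n)$ is automatic. Thus (reading the statement as ``for every length $N$'') the entire content is to construct, for each $N$, an extremal word of length $N$.

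First I would pin down where the two quantities cross. Let $R$ be the largest integer with $k^R\le N-R+1$, equivalently $k^R+R-1\le N\le k^{R+1}+R-1$. Then $\min\{k^n,N-n+1\}=k^n$ for $n\le R$ and $\min\{k^n,N-n+1\}=N-n+1$ for $n\ge R+1$. I claim it is enough to produce a word $w$ of length $N$ such that (a) every word of length $R$ over $\mathcal{A}_k$ lies in $Sub(w)$, and (b) no word of length $R+1$ occurs twice in $w$. Indeed, (a) forces $p_w(n)=k^n$ for all $n\le R$, since every shorter word is a prefix of some length-$R$ word and hence also occurs; and (b) forces, for each $n\ge R+1$, all $N-n+1$ length-$n$ blocks of $w$ to be distinct, because a repeated length-$n$ block would contain a repeated length-$(R+1)$ block, giving $p_w(n)=N-n+1$. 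At length $R$ itself repetitions are allowed, which is consistent since $N-R+1\ge k^R$.

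Next I would translate (a) and (b) into the de Bruijn graph. Let $G$ be the directed graph with the $k^R$ words of length $R$ as vertices and the $k^{R+1}$ words of length $R+1$ as edges, where $a_1a_2\cdots a_{R+1}$ runs from $a_1\cdots a_R$ to $a_2\cdots a_{R+1}$. Every vertex has in-degree and out-degree $k$ and $G$ is strongly connected, so $G$ is Eulerian. A word of length at least $R$ is precisely a walk in $G$: its length-$R$ factors are the vertices it visits and its length-$(R+1)$ factors are the edges it traverses. Under this dictionary, (b) says the walk repeats no edge (it is a \emph{trail}) and (a) says the walk meets every vertex. So I need a trail in $G$ with exactly $N-R$ edges that visits every vertex, and the edge count is feasible because $k^R-1\le N-R\le k^{R+1}-1$. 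A de Bruijn word of order $R$ is exactly a Hamiltonian path of $G$: it visits all $k^R$ vertices using $k^R-1$ distinct edges, and its length is $k^R+R-1$, which already settles the case $N=k^R+R-1$.

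For larger $N$ I would extend this trail to length $N-R$. The cleanest route is to realize the Hamiltonian path as the opening segment of an Eulerian circuit of $G$ --- equivalently, to grow the order-$R$ de Bruijn word into an order-$(R+1)$ de Bruijn word --- and then truncate that circuit after exactly $N-R$ edges; since all vertices are already met in the first $k^R-1\le N-R$ edges, the truncation still covers every vertex and reads off a length-$N$ word obeying (a) and (b). The main obstacle is exactly this embedding: I must show the edges left unused by the Hamiltonian path still form a connected graph, so that the path genuinely completes to an Eulerian circuit rather than stranding some edges. Here the regular structure of $G$ (equal in- and out-degrees, strong connectivity) must be used. A convenient tool is the balance observation that in a digraph with in-degree equal to out-degree at every vertex, a trail can fail to be extendable only when it has returned to its starting vertex with that vertex's out-edges exhausted; this lets me lengthen the trail one edge at a time, preserving the coverage already attained, until it has the required $N-R$ edges. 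Combining the resulting word with the upper bound of Theorem~\ref{min:th} then yields $p_w(n)=\min\{k^n,N-n+1\}$ and the domination $p_u(n)\le p_w(n)$, completing the proof.
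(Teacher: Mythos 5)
Your reduction is sound, and it is essentially the paper's as well: Theorem \ref{min:th} supplies the upper bound, your $R$ is the right crossing point, and your conditions (a) and (b) are exactly what force $p_w(n)=k^n$ for $n\le R$ and $p_w(n)=N-n+1$ for $n\ge R+1$. So the entire theorem rests on constructing a trail in $B_k(R)$ with exactly $N-R$ edges that visits every vertex, and this is where the proposal breaks down. The ``cleanest route'' you propose --- realizing a Hamiltonian path as the opening segment of an Eulerian circuit, equivalently growing an order-$R$ de Bruijn word into an order-$(R+1)$ one --- is not merely unproven but false in general. Take $k=2$, $R=2$. The unique Hamiltonian cycle of $B_2(2)$ is $00\to 01\to 11\to 10\to 00$, so every Hamiltonian path is one of its four arcs; consider $00\to 01\to 11\to 10$, i.e.\ the de Bruijn word $00110$. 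The only edges entering the vertex $11$ are $011$ and the loop $111$, and the path has already used $011$; hence no continuation of this path can ever traverse $111$, so no Eulerian circuit of $B_2(2)$ begins with it (the other three arcs strand one of the loops $000$, $111$ in the same way). In particular, $00110$ is a prefix of no order-$3$ de Bruijn word, so the unused edges do \emph{not} form a connected graph, and the truncation plan has nothing to truncate.

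Your fallback --- extending the trail one edge at a time via the in-/out-degree balance --- is the right kind of tool but is incomplete at exactly the critical point. Balance shows a trail can only get stuck at its starting vertex; it does not show this cannot happen before the trail reaches $N-R$ edges, and it can: continuing the same example by $10\to 00$ (edge $100$) and $00\to 00$ (loop $000$) leaves the trail stuck at its start $00$ with only $5$ edges, whereas $N-R$ may be as large as $7$. The missing idea is what to do when stuck, and supplying it is the real content of the paper's proof. The paper works with the Hamiltonian \emph{cycle} $C$ rather than a path: in $B_k(n)-E(C)$ every vertex has in-degree and out-degree $k-1$, so each weakly connected component is Eulerian, and the word is read off a tour that follows $C$ and splices in whole Eulerian circuits of as many components as needed, plus an initial segment of one last circuit, to total exactly $N-n$ edges; because these detours are \emph{closed}, no connectivity hypothesis on the complement of $C$ is required. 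Alternatively, you could repair your version with a Hierholzer-style re-rooting (when the closed trail is stuck at its start, re-root it at a vertex of the trail that still has an unused out-edge; this preserves both the trail property and vertex coverage), but some such device must be stated and justified --- without it, the construction, and hence the existence half of the theorem, is not established.
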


\begin{definition}
A word of length $N$ over $\mathcal{A}_k$ with subword complexity $p_w(n)= \min\{k^n, N-n+1\}$ for $1 \le n \le N$ is called a de Bruijn word. 
\label{deBdef}
\end{definition}

De Bruijn words are often used in decoding problems. We will discuss de Bruijn words extensively in Section 4.

\begin{definition}
A sequence $(s_1, s_2, \ldots, s_n)$ is unimodal if there exists $t$, $1 \le t \le n$, such that 
$
s_1 \le s_2 \le \ldots \le s_t
$ 
and
$
s_t \ge s_{t+1} \ge \ldots \ge s_n .
$
\end{definition}

\begin{example}
Consider all words of length $1 \le n \le 7$ over $\mathcal{A}_2$. The different subword complexity sequences associated to these lengths are:\\

\end{example}
\smallskip
\begin{center}
\begin{tabular}{|c|c|}
\hline
\multicolumn{2}{|c|}{Subword complexity sequences of binary words of length $n$} \\
\hline
$n$ & $p_w$  \\ \hline\hline
\multirow{1}{*}{1} & (1) \\ \hline
\multirow{2}{*}{2} & (1,1) \\
 & (2,1)  \\ \hline
\multirow{2}{*}{3} & (1,1,1) \\
 & (2,2,1) \\ \hline
 \multirow{3}{*}{4}  & (1,1,1,1) \\ 
 & (2,2,2,1) \\ & (2,3,2,1) \\ \hline 
 \multirow{4}{*}{5} & (1,1,1,1,1) \\ & (2,2,2,2,1) \\ & (2,3,3,2,1) \\ & (2,4,3,2,1) \\ \hline
\multirow{5}{*}{6}  & (1,1,1,1,1,1) \\ & (2,2,2,2,2,1)  \\ & (2,3,3,3,2,1)  \\ & (2,3,4,3,2,1)   \\ & (2,4,4,3,2,1) \\ \hline
\multirow{7}{*}{7} & (1,1,1,1,1,1,1)  \\ & (2,2,2,2,2,2,1)  \\ & (2,3,3,3,3,2,1)  \\ & (2,3,4,4,3,2,1)   \\ & (2,3,5,4,3,2,1) \\ & (2,4,4,4,3,2,1)  \\ & (2,4,5,4,3,2,1) \\ \hline
\end{tabular}\\
\end{center}
\bigskip

Looking at these subword complexity sequences for words of lengths $1$ to $7$ we see that they are all unimodal. In the following theorem, we will prove that, in fact, all subword complexity sequences are unimodal. The statement of the following theorem is taken from de Luca \cite{AL:1999}.

\begin{theorem}
	The subword complexity sequence of a finite word over $\mathcal{A}_k$ is unimodal. Moreover, once it starts decreasing, it decreases by 1 until it reaches 1.
\label{main}	
\end{theorem}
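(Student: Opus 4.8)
The plan is to control the first difference $d(n) := p_w(n+1) - p_w(n)$ by relating it to the valence counts $s(n,i)$ and to the two quantities $R_w$ and $K_w$. First I would record the basic counting identity coming from the tree description of $Sub(w)$: each subword of length $n+1$ is obtained from its length-$n$ prefix $u$ by appending one of the $|R_u|$ letters that may follow $u$, so every subword of length $n+1$ is the child of exactly one subword of length $n$. Hence
\[
p_w(n+1) = \sum_{|u|=n} |R_u| = \sum_{i \ge 0} i\, s(n,i), \qquad p_w(n) = \sum_{i \ge 0} s(n,i),
\]
and subtracting gives
\[
d(n) = \sum_{i \ge 0}(i-1)\, s(n,i) = B(n) - s(n,0), \qquad B(n) := \sum_{i \ge 2}(i-1)\, s(n,i) \ge 0,
\]
where $B(n)$ measures the total branching contributed by the special subwords of length $n$.

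The key lemma I would prove next is a monotonicity statement for special subwords: if a subword $u$ of length $n+1$ is special, then its suffix $u'$ of length $n$ is special. Writing $u = cu'$, if $ua, ub \in Sub(w)$ for distinct letters $a \neq b$, then $u'a$ and $u'b$ are subwords of $ua$ and $ub$ respectively, hence of $w$, so $u'$ has valence $\ge 2$. Contrapositively, if no subword of length $n$ is special then none of length $n+1$ is, that is, $B(n)=0 \Rightarrow B(n+1)=0$. Since $R_w$ is by definition the least $n$ with $B(n)=0$, this forces $\{\,n : B(n)=0\,\}$ to be an up-set, so $B(n) \ge 1$ for $n < R_w$ and $B(n) = 0$ for $n \ge R_w$.

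Finally I would combine this with the behavior of $s(n,0)$ recorded earlier in the Preliminaries: $s(n,0)=0$ for $n < K_w$ and $s(n,0)=1$ for $K_w \le n \le N$, so in particular $s(n,0)$ is nondecreasing. For $n < R_w$ we then get $d(n) = B(n) - s(n,0) \ge 1 - 1 = 0$, while for $n \ge R_w$ we get $d(n) = -s(n,0) \in \{0,-1\}$. Setting $T := \max\{R_w, K_w\}$, a short case check shows $d(n) \ge 0$ for $1 \le n < T$ and $d(n) = -1$ for $T \le n \le N-1$: once $n \ge T$ we simultaneously have $n \ge R_w$ and $n \ge K_w$, and both conditions persist for larger $n$. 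This says exactly that $p_w$ is nondecreasing up to index $T$ and then strictly decreases by exactly $1$ at every subsequent step; since $p_w(N)=1$, it decreases by $1$ all the way down to $1$, which is the unimodality statement together with the refined "decreases by $1$ until it reaches $1$" claim.

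The main obstacle is the monotonicity lemma on special subwords (the suffix of a special word is special): it is what makes $\{\,n : B(n)=0\,\}$ an up-set and thereby prevents the complexity from rising again after it has begun to fall. Everything after that is bookkeeping with the difference identity and the already-established facts about $s(n,0)$.
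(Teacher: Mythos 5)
Your proof is correct and takes essentially the same route as the paper: both control the difference $p_w(n+1)-p_w(n)$ via the identity $p_w(n+1)-p_w(n)=\sum_{i\ge 2}(i-1)s(n,i)-s(n,0)$ (the paper's equations for $n<K_w$ and $n\ge K_w$ are exactly your $d(n)=B(n)-s(n,0)$ split according to the value of $s(n,0)$) and then perform a case check on the intervals determined by $R_w$ and $K_w$. If anything, your write-up is slightly more complete: you explicitly prove that the length-$n$ suffix of a special subword of length $n+1$ is special, which is the fact needed to conclude $s(n,i)=0$ for $i\ge 2$ for \emph{all} $n\ge R_w$ rather than just $n=R_w$, a step the paper's proof uses implicitly.
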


\begin{proof}
Let $w$ be a finite word of length $N$, and let $p_w$ be its subword complexity sequence. Recall that the parameter $R_w$ is the minimum $n$ such that all the subwords of $w$ of length $n$ are followed by at most one letter in $w$, and the parameter $K_w$ is the minimal length of a suffix of $w$ that occurs only once in $w$.
 
Note that for $n <K_w$, 
\begin{equation}
p_w(n+1) = p_w(n) + \sum_{i=2}^k (i-1)s_w(n,i) \ge p_w(n)+s_w(n),
\label{eq1}
\end{equation}
 where $s_w(n,i)$ is the number of subwords of length $n$ followed by $i$ distinct letters in $w$, and  $$s_w(n)= \sum_{i=2}^k s_w(n,i).$$ For $n \ge K_w$, 
 \begin{equation}
 p_w(n+1) = p_w(n)+\sum_{i=2}^k (i-1)s_w(n,i)-1 \ge p_w(n) + s_w(n) -1.
 \label{eq2}
 \end{equation}
 Let $m=\min\{R_w, K_w\}$ and $M = \max\{R_w, K_w\}$. We will show that $p_w$ is strictly increasing on the interval $[1, m]$, nondecreasing on the interval $[m, M]$, and decreasing on the interval $[M, N]$. Also, if $R_w < K_w$ then $p_w$ is constant on the interval $[m,M]$. We will consider two cases:

\begin{enumerate}
\item $R_w < K_w$
\par For $n < R_w$, we have that $s_w(n) \ge 1$, so by equation (\ref{eq1}) $$p_w(n+1) \ge p_w(n) + s_w(n) \ge p_w(n)+1,$$ hence $p_w$ is strictly increasing on the interval $[1,R_w]$. 

For $R_w \le n< K_w$, we have that $s_w(n,i)=0$ for $i \ge 2$, by equation (\ref{eq1}) $$p_w(n+1)=p_w(n),$$ hence $p_w$ is constant on the interval $[R_w, K_w]$.  

For $K_w \le n< N$, we have that $s_w(n,i)=0$ for $i \ge 2$, by equation (\ref{eq2}) $$p_w(n+1)=p_w(n) -1,$$ so $p_w$ is strictly decreasing on the interval $[K_w, N]$. \\

\begin{figure}[ht]
\begin{center}
\psfragscanon
\psfrag{1}{$1$}
\psfrag{R}{$R_w$}
\psfrag{K}{$K_w$}
\psfrag{N}{$N$}
\psfrag{p}{$p_w(n)$}
\psfrag{n}{$n$}
\includegraphics[scale=.45]{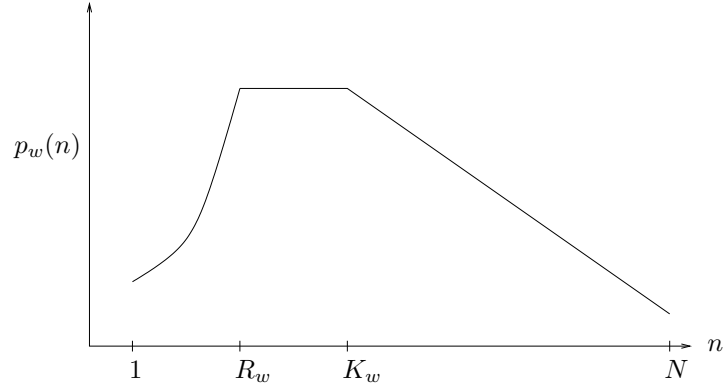}
\caption{The subword complexity function when $R_w < K_w$.} 
\end{center}
\end{figure}

\item $K_w \le R_w$
\par For $n < K_w$, we have that $s_w(n) \ge 1$, by equation (\ref{eq1}) $$p_w(n+1) \ge p_w(n) + s_w(n) \ge p_w(n)+1,$$ hence $p_w$ is strictly increasing on the interval $[1,K_w]$.

 For $K_w \le n< R_w$, we have that $s_w(n)\ge1$, by equation (\ref{eq2}) $$p_w(n+1)=p_w(n)+s_w(n)-1\ge p_w(n),$$ hence $p_w$ is nondecreasing on the interval $[K_w, R_w]$.  
 
 For $R_w \le n< N$, we have that $s_w(n,i)=0$ for $i \ge 2$, by equation (\ref{eq2}) $$p_w(n+1)=p_w(n) -1,$$ so $p_w$ is strictly decreasing on the interval $[R_w, N]$. \\

\begin{figure}[ht]
\begin{center}
\psfragscanon
\psfrag{1}{$1$}
\psfrag{R}{$K_w$}
\psfrag{K}{$R_w$}
\psfrag{N}{$N$}
\psfrag{p}{$p_w(n)$}
\psfrag{n}{$n$}
\includegraphics[scale=.45]{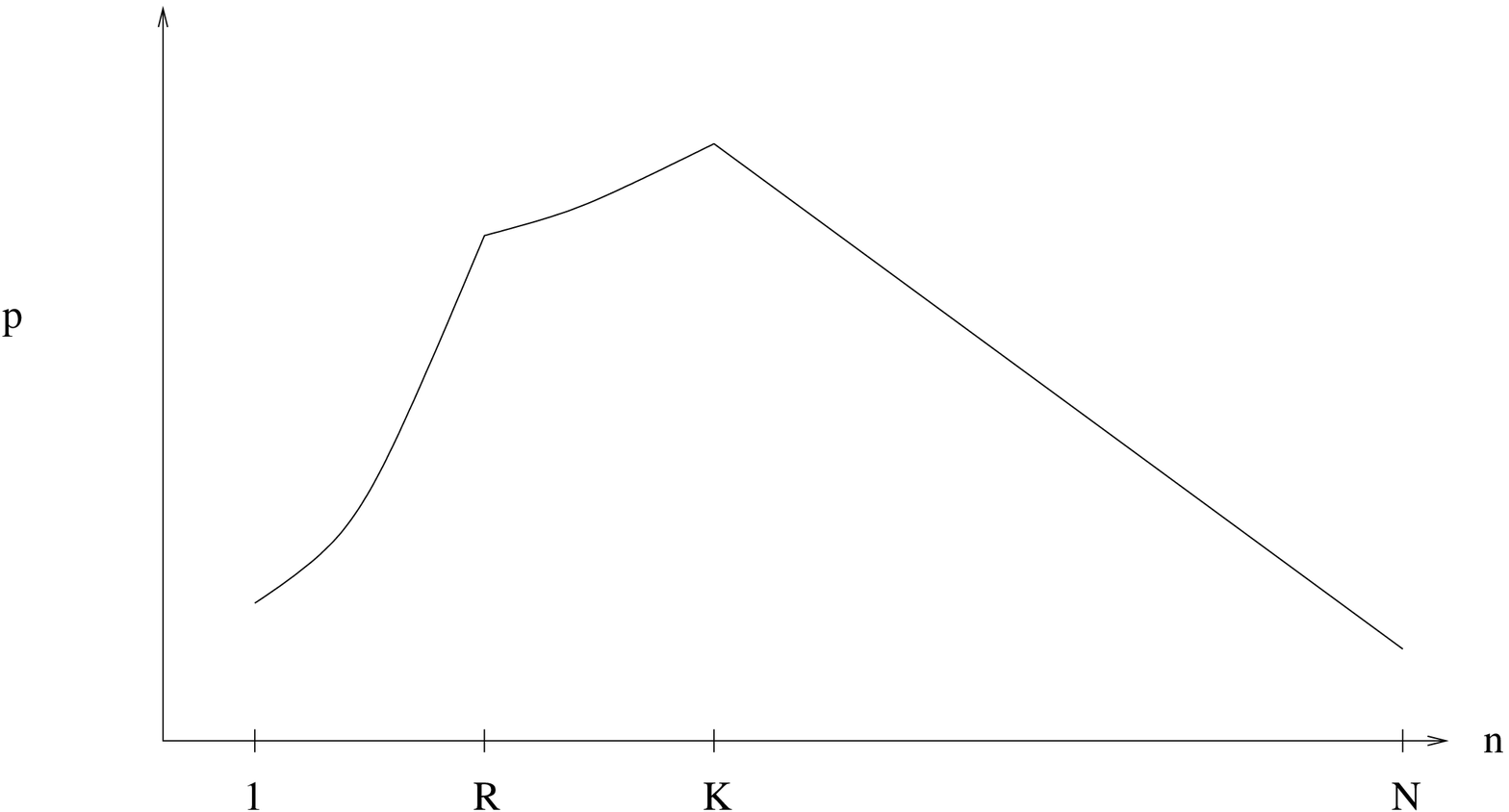}
\caption{The subword complexity function when $K_w \le R_w$.} 
\end{center}
\end{figure}

\end{enumerate}

Note that $p_w(n+1) = p_w(n) - 1$ for $n \ge M$, and that $p_w(N)=1$.\\
\end{proof}

The following theorem, originally proved for infinite words in Allouche and Shallit \cite{AS:2003}, has been modified to apply to finite words.

\begin{theorem}
Let $w$ be a word over an alphabet $\mathcal{A}_k$ and $\left|w\right|=N$. Then for all $1 \le n < K_w$,
\[ 
p_w(n+1) - p_w(n) \le k(p_w(n)- p_w(n-1)).
\]
\label{kineq:th}
\end{theorem}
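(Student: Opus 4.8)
The plan is to reinterpret both first differences through the counting identity (\ref{eq1}) and then bound one by the other via a suffix grouping. Since $1 \le n < K_w$ forces $n-1 < K_w$ as well, every subword of length $n$ and every subword of length $n-1$ has valence at least $1$, so (\ref{eq1}) applies at both $n$ and $n-1$. I would first record that
\[
p_w(n+1)-p_w(n)=\sum_{i=2}^{k}(i-1)s_w(n,i)=\sum_{|u|=n}\bigl(|R_u|-1\bigr),
\]
where the sum runs over all distinct subwords $u$ of length $n$ (valence-$1$ subwords contribute $0$), and likewise $p_w(n)-p_w(n-1)=\sum_{|v|=n-1}(|R_v|-1)$. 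Thus the whole theorem reduces to comparing $\sum_{|u|=n}(|R_u|-1)$ with $k\sum_{|v|=n-1}(|R_v|-1)$.

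Next I would group the length-$n$ subwords by their length-$(n-1)$ suffix. Each subword $u$ of length $n$ factors uniquely as $u=cv$, where $c$ is its first letter and $v$ is its suffix of length $n-1$; moreover the admissible first letters $c$ are precisely the elements of $L_v$. Hence the length-$n$ subwords are partitioned according to $v$, with exactly $|L_v|$ of them sharing a given suffix $v$, giving
\[
\sum_{|u|=n}\bigl(|R_u|-1\bigr)=\sum_{|v|=n-1}\ \sum_{c\in L_v}\bigl(|R_{cv}|-1\bigr).
\]

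The key step, which I expect to be the crux, is the monotonicity of valence under prepending a letter: if $cva\in Sub(w)$ then its suffix $va\in Sub(w)$, so $R_{cv}\subseteq R_v$ and therefore $|R_{cv}|\le |R_v|$. Because $n-1<K_w$ guarantees $|R_v|\ge 1$, each term $|R_v|-1$ is nonnegative, which lets me replace each inner summand by $|R_v|-1$ and bound the number of summands by $|L_v|\le k$:
\[
\sum_{c\in L_v}\bigl(|R_{cv}|-1\bigr)\le |L_v|\,\bigl(|R_v|-1\bigr)\le k\,\bigl(|R_v|-1\bigr).
\]
Summing over all $v$ of length $n-1$ then yields $p_w(n+1)-p_w(n)\le k\bigl(p_w(n)-p_w(n-1)\bigr)$. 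The two subtleties to handle carefully are that the nonnegativity of $|R_v|-1$ is exactly what makes the replacement $|L_v|\le k$ run in the correct direction (and this is where the hypothesis $n-1<K_w$ is used), and the boundary case $n=1$, where $v=\epsilon$, $|R_\epsilon|=p_w(1)=|L_\epsilon|\le k$, and the identical estimate gives $p_w(2)-p_w(1)\le k\,(p_w(1)-1)=k\bigl(p_w(1)-p_w(0)\bigr)$.
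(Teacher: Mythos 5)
Your proof is correct and is essentially the paper's own argument in different bookkeeping: the paper's key step, that a length-$n$ subword of valence at least $i$ has a length-$(n-1)$ suffix of valence at least $i$ (giving $T(n,i) \subseteq \mathcal{A}_k T(n-1,i)$ and hence $t(n,i) \le k\, t(n-1,i)$ for the level sets $T(n,i)$ of valence $\ge i$), is exactly your monotonicity $R_{cv} \subseteq R_v$ combined with the fiber bound $|L_v| \le k$, and summing the paper's inequality over $i = 2, \ldots, k$ is just the layer-cake form of your sum of excess valences $\sum_{|u|=n}(|R_u|-1)$ grouped by suffix. Both proofs rest on the same two facts -- the suffix map is valence-nonincreasing and has fibers of size at most $k$ -- so the substance coincides, with your explicit treatment of the $n=1$ case and of where $n < K_w$ enters (nonnegativity of $|R_v|-1$) being a careful spelling-out of what the paper uses implicitly.
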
 

\begin{proof}
Let $n$ be an integer,$1 \le n < K_w$.

For $1 \le i \le k$, let $T(n,i)$ be the set of subwords of $w$ of length $n$ with valency \textit{at least} $i$, that is the set of subwords of $w$ of length $n$ that are followed by at least $i$ distinct letters of $\mathcal{A}_k$. Let $t(n,i)= \left|T(n,i)\right|$.

Let $u \in T(n,i)$. The suffix of length $n-1$ of $u$ is an element of $T(n-1,i)$. Thus $T(n,i) \subseteq \mathcal{A}_k T(n-1,i)$. So $$t(n,i) = \left|T(n,i)\right| \le \left|\mathcal{A}_k\right| \left|T(n-1,i)\right| = k\,t(n-1,i).$$ 

Note that since $n < K_w$, every subword of length $n$ of $w$ is a the prefix of at least one subword of length $n+1$ of $w$ and $$p_w(n+1)-p_w(n)= t(n,2) + t(n,3) + \ldots + t(n,k).$$

Similarly $p_w(n)-p_w(n-1)= t(n-1,2) + t(n-1,3) + \ldots + t(n-1,k).$
 
Since  $t(n,i) \le kt(n-1,i)$,

\begin{align*}
p_w(n+1)-p_w(n) &= t(n,2) + \ldots + t(n,k) \le k \,t(n-1,2) + \ldots + k\,t(n-1,k)\\
& = k(p_w(n)-p_w(n-1)).
\end{align*}

\end{proof}

The proofs of the following two theorems proved by de Luca in \cite{AL:1999} have been modified.

\begin{proposition}
The subword complexity of a word $w$ of length $N$ takes its maximal value at $R_w$ and, moreover, $p_w(R_w) = N - \max\{R_w,K_w\} +1$.
\label{Nmax:prop}
\end{proposition}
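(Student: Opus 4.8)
The plan is to read essentially everything off the unimodality analysis of Theorem \ref{main}, since that theorem already determines the monotonicity of $p_w$ on each of the three intervals cut out by $m = \min\{R_w, K_w\}$ and $M = \max\{R_w, K_w\}$. First I would recall its conclusions: $p_w$ is strictly increasing up to $m$, then nondecreasing on $[m, M]$ (in fact constant there precisely when $R_w < K_w$), and strictly decreasing by steps of $1$ on $[M, N]$, with $p_w(N) = 1$.

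To locate the maximum, I would split into the same two cases used in Theorem \ref{main}. When $R_w < K_w$ we have $M = K_w$, and $p_w$ is constant on the plateau $[R_w, K_w]$ and strictly decreasing afterward, so the maximum is attained throughout $[R_w, K_w]$ and in particular at $R_w$. When $K_w \le R_w$ we have $M = R_w$; here $p_w$ is nondecreasing up to $R_w$ and strictly decreasing after it, so the peak is at $R_w$. In both cases the maximal value equals $p_w(R_w)$, and it also equals $p_w(M)$: trivially when $M = R_w$, and via the constant plateau when $M = K_w$.

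It then remains to evaluate $p_w(M)$. Because the decreasing phase on $[M, N]$ drops by exactly $1$ at each step and terminates at $p_w(N) = 1$, counting the $N - M$ descending steps gives $p_w(M) = 1 + (N - M) = N - M + 1$. Combining this with the preceding paragraph yields $p_w(R_w) = N - M + 1 = N - \max\{R_w, K_w\} + 1$, which is exactly the claimed identity.

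I do not expect a serious obstacle once Theorem \ref{main} is available; the argument is bookkeeping on the interval structure already established. The only points requiring care are confirming that the maximum is genuinely attained at $R_w$ (and not merely somewhere on the top plateau) in the case $R_w < K_w$, and correctly counting that there are exactly $N - M$ unit decrements strictly between $M$ and $N$ so that the telescoping back from $p_w(N)=1$ lands on $N - M + 1$.
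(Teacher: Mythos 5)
Your proposal is correct and follows essentially the same route as the paper: the same case split on $R_w < K_w$ versus $K_w \le R_w$, the same use of Theorem \ref{main} (constant plateau in the first case, peak at $M$ in the second), and the same telescoping of the unit decrements from $p_w(N)=1$ back to $p_w(M) = N - M + 1$. The only difference is that the paper treats $w = a^N$ as a separate degenerate case before running the recursion, whereas your step count $N - M = 0$ handles it automatically, so nothing is missing.
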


\begin{proof}
If $R_w \ge K_w$ then by Theorem \ref{main}, $p_w$ takes its maximal value in $R_w$.  If $R_w < K_w$ then again by Theorem \ref{main} we have that $p_w$ takes its maximal value at $K_w$, but because $p_w$ is constant in the interval $[R_w, K_w]$, $p_w(R_w) = p_w(K_w)$ and so $p_w$ reaches its maximal value in $R_w$.

If $w = a^N$ for some $a \in \mathcal{A}$ then we have that $R_w = 1$, $K_w = N$ and $p_w(R_w) = p_w(1) = N - N+1 = 1$ which we know to be true since $p_w(n)=1$ for every $ n \in [1,N]$.

Assume that $w$ contains at least two letters. If $R_w \ge K_w$ then for every $n \in [R_w, N-1]$, $p_w(n+1)=p_w(n)-1$. This implies that $1 = p_w(n) - p_w(n+1)$. Continuing this recursion we have that $1 = p_w(N) = p_w(R_w) - (N-R_w)$ and so $p_w(R_w) = N - R_w +1$. If $R_w < K_w$ then for every $n \in [K_w, N-1]$, $p_w(n+1)=p_w(n)-1$ and we can derive that $1 = p_w(N) = p_w(K_w) - (N-K_w)$. Since $p_w(R_w)=p_w(K_w)$, the result follows.\\
\end{proof}

\begin{proposition}
Let $w$ be a word of length $N$ over $\mathcal{A}_k$. If $R_w=1$ then $R_w+K_w=N+1$. Otherwise $N \ge R_w + K_w$.
\label{R+K}
\end{proposition}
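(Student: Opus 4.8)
The plan is to reduce both assertions to the single identity furnished by Proposition \ref{Nmax:prop}. Writing $m=\min\{R_w,K_w\}$ and $M=\max\{R_w,K_w\}$, that proposition gives $p_w(R_w)=N-M+1$, equivalently $M=N-p_w(R_w)+1$, and therefore
\[
R_w+K_w=m+M=m+N+1-p_w(R_w).
\]
So everything comes down to bounding the peak value $p_w(R_w)$ from below. For this I would invoke Theorem \ref{main}: $p_w$ attains its maximum at $R_w$ and is strictly increasing on $[1,m]$, so $p_w(R_w)\ge p_w(m)\ge p_w(1)+(m-1)$, the last step because a strictly increasing integer sequence grows by at least $1$ per step. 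Substituting this into the displayed identity makes the $m$-dependence cancel and yields the master inequality $R_w+K_w\le N+2-p_w(1)$, where $p_w(1)$ is the number of distinct letters of $w$. Both cases will then be read off from the value of $p_w(1)$.

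First I would settle the generic branch $R_w\ge 2$ (the \emph{otherwise} clause). Since $R_w$ is the least length at which all valences are at most $1$, having $R_w\ge 2$ forces some letter to have valence at least $2$ at length $1$, i.e. some letter is followed in $w$ by two distinct letters; in particular $w$ uses at least two distinct letters, so $p_w(1)\ge 2$. The master inequality then gives $R_w+K_w\le N+2-2=N$, exactly as required.

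The case $R_w=1$ is where I expect the genuine difficulty to sit. Here $m=1$, the identity degenerates to $p_w(1)=N-K_w+1$, and the target equality $R_w+K_w=N+1$ is equivalent to $p_w(1)=1$, that is, to $w=a^{N}$ being a single repeated letter. The crux is thus to argue that the hypothesis really does pin down this degenerate structure, and then to compute the endpoints directly: for $w=a^{N}$ every proper suffix $a^{j}$ with $j<N$ recurs inside $w$, so the shortest suffix occurring exactly once is $w$ itself, whence $K_w=N$ and $R_w+K_w=1+N=N+1$. I would treat this single-letter configuration as the extremal case in which the monotonicity estimate is tight, and verify against Proposition \ref{Nmax:prop} and against small words that equality, rather than the strict drop of the generic branch, is the right conclusion. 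Isolating this extremal structure—separating the constant word from every word that carries a genuine special factor—is the step that carries the weight of the argument.
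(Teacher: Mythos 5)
Your reduction to a ``master inequality'' is correct and is, in substance, the same monotonicity argument the paper runs: Proposition \ref{Nmax:prop} gives $\max\{R_w,K_w\}=N+1-p_w(R_w)$, Theorem \ref{main} gives $p_w(R_w)\ge p_w(\min\{R_w,K_w\})\ge p_w(1)+\min\{R_w,K_w\}-1$, and adding these yields $R_w+K_w\le N+2-p_w(1)$. Your handling of the \emph{otherwise} clause is also sound: if $R_w\ge 2$ then by minimality of $R_w$ some letter has valence $\ge 2$, so $p_w(1)\ge 2$ and the master inequality gives $R_w+K_w\le N$. This packages uniformly what the paper does in two separate cases ($K_w=1$, and $R_w,K_w>1$), which is a mild improvement.

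The genuine gap is the case $R_w=1$, precisely where you said the weight of the argument sits. You correctly reduce the desired equality $R_w+K_w=N+1$ to the implication ``$R_w=1$ implies $p_w(1)=1$, i.e.\ $w=a^N$,'' and you verify the easy converse computation for $w=a^N$; but the implication itself is announced as ``the crux'' and never proved, so the proof is incomplete as written. Worse, this step cannot be supplied: under the paper's definitions the implication is false. Take $w=0101$. Every occurrence of $0$ is followed only by $1$ and every occurrence of $1$ is followed only by $0$, so all length-$1$ subwords have valence $\le 1$ and $R_w=1$; yet the shortest suffix occurring only once is $101$, so $K_w=3$ and $R_w+K_w=4=N$, not $N+1$. (The paper's own proof has the same defect: it opens with the bare, unproved assertion ``If $R_w=1$ then $w=a^N$,'' which this example refutes; constant words are not the only words in which every letter has at most one right extension.) The dichotomy that actually works is on $p_w(1)$, not on $R_w$: if $p_w(1)=1$ then $w=a^N$ and $R_w+K_w=N+1$ by your direct computation, while if $p_w(1)\ge 2$ your master inequality at once gives $R_w+K_w\le N$. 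So your diagnosis of where the difficulty lies was exactly right, but the missing step is not a missing idea of yours --- it is an unfixable hole in the statement's case split as formulated.
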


\begin{proof}
If $R_w=1$ then $w=a^N$ for some $a \in \mathcal{A}_k$. So $K_w=N$ and it follows that $R_w+K_w = N+1$. 

If $K_w=1$ then $w=ua$, where $a \in \mathcal{A}_k$ and $a$ does not occur in $u$. Then $R_w \le N-1$, equality holding when $w=b^{N-1}a$, $a \neq b$. So $R_w+K_w \le N$.
 
Assume $R_w,K_w > 1$ and let $m = \min\{R_w,K_w\}$. For all $n \in [1, m-1]$, $s(n,1)=0$, so $p_w(n+1) \ge p_w(n) +1$. 
Suppose that $R_w \le K_w$. Then $m = R_w$ and since $p_w(1) \ge2$, $p_w(R_w) \ge R_w +1$. From Proposition \ref{Nmax:prop} we have that $p_w(R_w) = p_w(K_w) = N -K_w + 1$. So $N - K_w +1 = p_w(R_w) \ge R_w +1 $ and thus $N \ge R_w + K_w$. 
If $R_w > K_w$  we have that $m = K_w$ and $p_w(K_w) \ge K_w +1$. Again by Proposition \ref{Nmax:prop} we have $p_w(K_w) \le p_w(R_w) = N - R_w +1$ and thus $N \ge R_w + K_w$. 
\end{proof}


\bigskip
\section{de Bruijn Words}

Recall from the previous section that a de Bruijn word is a word of length $N$ over $\mathcal{A}_k$ with subword complexity $p_w(n)= \min\{k^n, N-n+1\}$ for $1 \le n \le N$. This is a generalization of the traditional definition of the de Bruijn words, which arise from de Bruijn graphs, and can be thought of as the shortest random-like words.\\

\begin{definition}
A directed graph $G$ is a pair $(V,E)$, where $E$ is a subset of the cartesian product $V \times V$. The ordered pairs $(v,w) \in E$ are called directed edges.
\end{definition}

\begin{figure}[ht]
\begin{center}
\psfragscanon
\psfrag{u}{$v_1$}
\psfrag{v}{$v_4$}
\psfrag{w}{$v_2$}
\psfrag{z}{$v_3$}
\psfrag{e1}{$e_1$}
\psfrag{e2}{$e_2$}
\psfrag{e3}{$e_3$}
\psfrag{e4}{$e_4$}
\psfrag{e5}{$e_5$}
\includegraphics[scale=1]{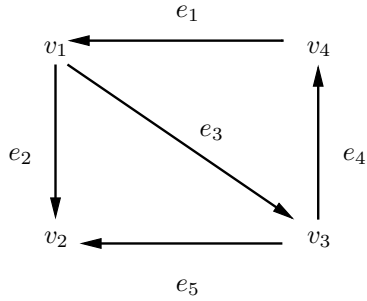}
\caption{A directed graph on four vertices.} 
\label{dir:line}
\end{center}
\end{figure}

The number of directed edges ending in $v$ is the in-degree of $v$ and is denoted by $deg^+(v)$. Similarly, the number of directed edges originating in $v$ is the out-degree of $v$ and is denoted by $deg^-(v)$. In the figure above, $deg^+(v_3) = 1$ and $deg^-(v_3)=2$.  

We say that a directed edge $e=(v,w)$ has source $v$, denoted by $s(e)$, and  target $w$, denoted by $t(e)$.

\begin{definition}
A directed walk in a directed graph $G= (V,E)$ is a sequence $(v_0, e_1, v_1, e_2, \ldots, e_n, v_n)$ such that $e_i = (v_{i-1}, v_i) \in E$ for each $i = 1, 2, \ldots, n$.
\end{definition}

\begin{definition}
The de Bruijn graph $B_k(n) = (V,E)$ is a directed graph whose vertices are words of length $n$ over $\mathcal{A}_k$ and whose edges are words of length $n+1$ over $\mathcal{A}_k$ such that a word $w$ of length $n+1$ is a directed edge from vertex $v_1$, which is the prefix of length $n$ of $w$, to vertex $v_2$, which is the suffix of length $n$ of $w$.
\end{definition}

Note that a directed walk of length $m$ in $B_k(n)$ from vertex $v$ to vertex $w$ corresponds to a word of length $n+m$ with prefix $v$ and suffix $w$. Therefore there is a correspondence between words over $\mathcal{A}_k$ of length at least $n$ and walks in $B_k(n)$. Let $u,v \in E(B_k(n))$. Then the word $uv$ is a walk in $B_k(n)$ from $u$ to $v$. This proves that $B_k(n)$ is a connected directed graph.

\begin{figure}[ht]
\begin{center}
\psfragscanon
\psfrag{0}{$0$}
\psfrag{1}{$1$}
\includegraphics[scale=.75]{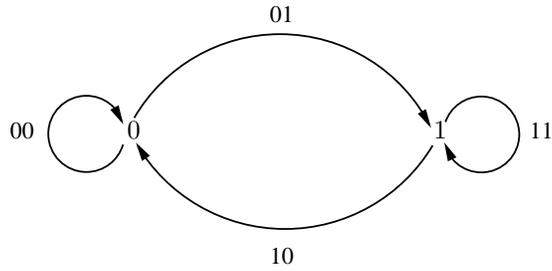}
\end{center}
\caption{ The de Bruijn graph $B_2(1)$.}
\end{figure}

\begin{figure}[ht]
\begin{center}
\psfragscanon
\psfrag{00}{$00$}
\psfrag{11}{$11$}
\psfrag{01}{$01$}
\psfrag{10}{$10$}
\psfrag{100}{$100$}
\psfrag{110}{$110$}
\psfrag{001}{$001$}
\psfrag{011}{$011$}
\psfrag{000}{$000$}
\psfrag{111}{$111$}
\psfrag{010}{$010$}
\psfrag{101}{$101$}
\includegraphics[scale=.75]{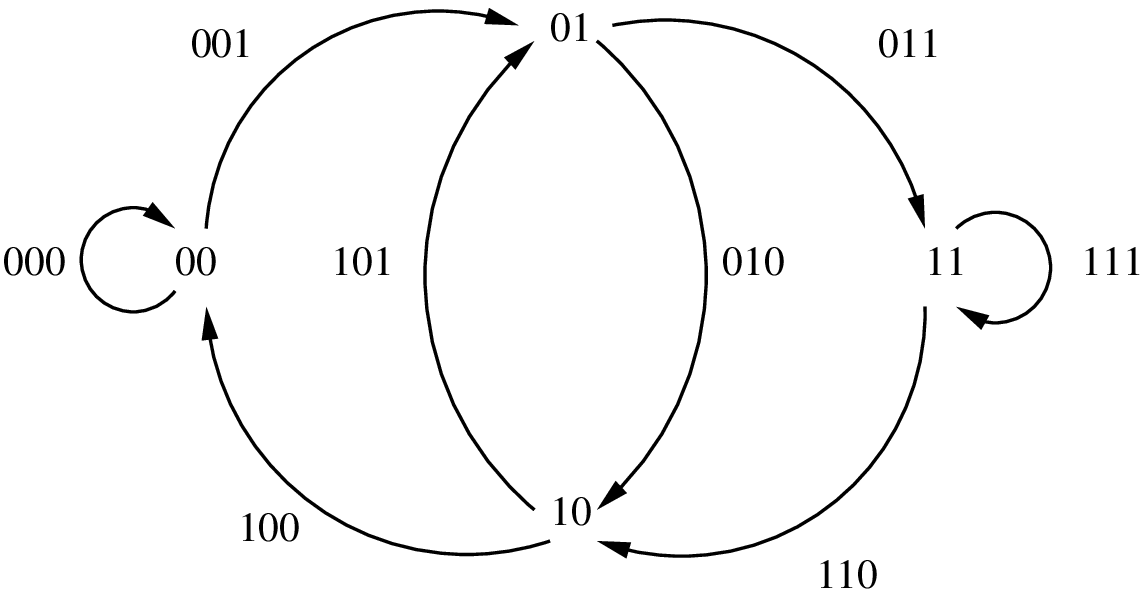}
\end{center}
\caption{The de Bruijn graph $B_2(2)$.}
\end{figure}

\begin{figure}[ht]
\begin{center}
\psfragscanon
\psfrag{000}{$\textbf{000}$}
\psfrag{111}{$\textbf{111}$}
\psfrag{010}{$\textbf{010}$}
\psfrag{100}{$\textbf{100}$}
\psfrag{101}{$\textbf{101}$}
\psfrag{110}{$\textbf{110}$}
\psfrag{001}{$\textbf{001}$}
\psfrag{011}{$\textbf{011}$}
\psfrag{0000}{$0000$}
\psfrag{1111}{$1111$}
\psfrag{0100}{$0100$}
\psfrag{1000}{$1000$}
\psfrag{1010}{$1010$}
\psfrag{1100}{$1100$}
\psfrag{0001}{$0001$}
\psfrag{1110}{$1110$}
\psfrag{0101}{$0101$}
\psfrag{1001}{$1001$}
\psfrag{1011}{$1011$}
\psfrag{1101}{$1101$}
\psfrag{0010}{$0010$}
\psfrag{0111}{$0111$}
\psfrag{0011}{$0011$}
\psfrag{0110}{$0110$}
\includegraphics[scale=.75]{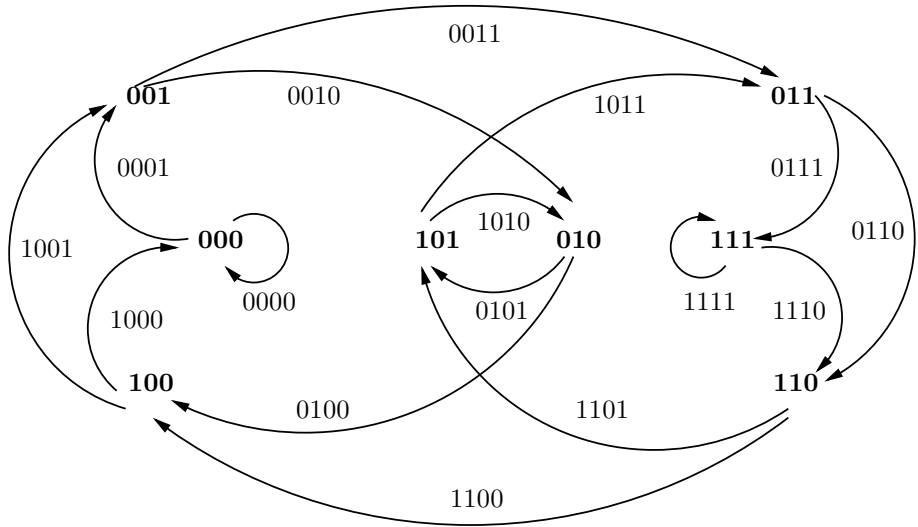}
\end{center}
\caption{The de Bruijn graph $B_2(3)$.}
\end{figure}

\begin{definition}
Let $G=(V,E)$ be a directed graph. The line graph $\mathcal{L}G$ of $G$ is a directed graph such that $V(\mathcal{L}G) = E(G)$, and there is an edge $(e,f)$ for every pair of edges $e,f \in E(G)$ with $t(e)=s(f)$. 
\end{definition}

\begin{figure}[ht]
\begin{center}
\psfragscanon
\psfrag{e1}{$e_1$}
\psfrag{e2}{$e_2$}
\psfrag{e3}{$e_3$}
\psfrag{e4}{$e_4$}
\psfrag{e5}{$e_5$}
\includegraphics[scale=1]{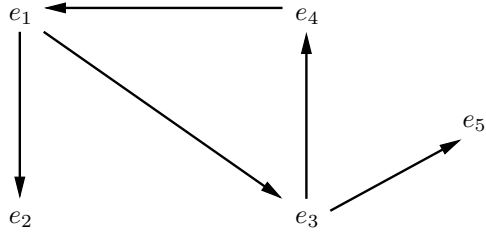}
\caption{The line graph of  the graph in Figure \ref{dir:line}.}
\end{center}
\end{figure}

\begin{definition}
A tour in a directed graph $G$ is a sequence $(v_0, e_1, v_1, e_2, \ldots, e_n, v_n)$ such that $e_i = (v_{i-1}, v_i) \in E$ for each $i = 1, 2, \ldots, n$, and moreover, $e_i \neq e_j$ whenever $i \neq j$.
\end{definition}

\begin{definition}
A directed graph $G$ is Eulerian if there exists a closed directed tour containing all vertices and passing every directed edge exactly once. 
\end{definition}

\begin{definition}
The symmetrization of a directed graph $G=(V,E)$  is the undirected graph $sym(G) = (V, \tilde{E})$, where $$\tilde{E} = \{ \{x,y\} : (x,y) \in E \text{ or } (y,x) \in E \}.$$
\end{definition}

\begin{theorem}
A directed graph $G$ is Eulerian if and only if it's symmetrization is connected, and $deg^+(v)=deg^-(v)$ for every $v \in V(G)$.
\end{theorem}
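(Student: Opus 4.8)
The plan is to prove the two implications separately; the forward (necessity) direction is routine, while the converse (sufficiency) carries the real content.

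For necessity, I would fix a closed Eulerian tour $(v_0, e_1, v_1, \ldots, e_n, v_n)$ with $v_0 = v_n$ that uses every edge exactly once and meets every vertex. The degree equality follows from a pairing argument: as the tour proceeds, each arrival at a vertex $v$ along an incoming edge is matched by a subsequent departure along an outgoing edge, and because the tour is closed this matching is exact even at $v_0$ (the initial departure pairs with the final arrival). Since each edge is traversed exactly once, counting arrivals and departures at $v$ yields $deg^+(v) = deg^-(v)$. Connectivity of $sym(G)$ is immediate: consecutive vertices $v_{i-1}, v_i$ are joined by $e_i$ and hence adjacent in $sym(G)$, and the tour meets every vertex, so all vertices lie in a single component.

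For sufficiency I would proceed in two stages. First I would establish that \emph{any maximal tour is closed}: starting from an arbitrary vertex $v_0$, greedily extend a tour (a walk with no repeated edge) until it halts at some vertex $u$. If $u \neq v_0$, then every previous visit to $u$ both entered and left, while the final visit entered but cannot leave, so the number of outgoing edges used at $u$ is one less than the number of incoming edges used; but halting means all $deg^-(u)$ outgoing edges are exhausted, forcing the incoming edges used to be $deg^-(u)+1 = deg^+(u)+1$, which exceeds $deg^+(u)$, a contradiction. Hence $u = v_0$ and the tour is closed.

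Second, a \emph{splicing} argument would upgrade a closed tour to an Eulerian one. Let $T$ be a closed tour and remove its edges to obtain $G'$. Removing a closed tour deletes equally many incoming and outgoing edges at each vertex, so the balance condition $deg^+_{G'}(v) = deg^-_{G'}(v)$ survives in $G'$. If $G'$ still has an edge, then connectivity of $sym(G)$ forces some vertex $w$ on $T$ to be incident to an edge of $G'$ (otherwise the vertices of $T$ would be wholly separated from the rest of $G$), and balance at $w$ then supplies an \emph{unused outgoing} edge there. Running the first-stage construction inside $G'$ from $w$ yields a closed tour $T'$, which I splice into $T$ at an occurrence of $w$ to form a strictly longer closed tour. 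Iterating and invoking finiteness of $E$, the process terminates in a closed tour using every edge; connectivity rules out any isolated vertex (a vertex with $deg^+ = deg^- = 0$ would disconnect $sym(G)$ unless $G$ is a single point), so this tour meets every vertex and is Eulerian.

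The main obstacle is the sufficiency direction, and within it the splicing step. I must check three things: that the residual graph $G'$ inherits the degree balance; that connectivity of $sym(G)$ genuinely yields a splice vertex lying on the current tour and carrying an unused outgoing edge; and that each splice strictly increases the number of edges used, so that the iteration terminates. The closed-maximal-tour lemma and the accounting needed for the ``contains all vertices'' clause are comparatively straightforward once balance is in hand.
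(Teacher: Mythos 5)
The paper gives no proof of this theorem to compare against: it is stated as a known classical fact (Euler's theorem for directed graphs, proved by the splicing argument going back to Hierholzer) and is immediately applied to show that $B_k(n)$ is Eulerian. Your proposal therefore supplies what the paper omits, and it is correct, following exactly the classical route: necessity by pairing each arrival at a vertex with a departure along the closed tour (exact at the base vertex because the tour is closed), together with the observation that consecutive tour vertices are adjacent in $sym(G)$; sufficiency by the maximal-tour lemma plus splicing. The three checkpoints you flag all go through. Removing a closed tour removes equally many incoming and outgoing edges at each vertex, so balance passes to the residual graph $G'$. If $G'$ still has an edge, then some edge of $G'$ must be incident to the vertex set $S$ of the current tour $T$: every edge of $T$ has both endpoints in $S$, so if no edge of $G'$ touched $S$ there would be no edge of $G$ at all between $S$ and $V \setminus S$, and $sym(G)$ would be disconnected (the case $V \setminus S = \emptyset$ is trivial, since then any remaining edge is automatically incident to $S$). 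Balance in $G'$ then converts an incident edge at $w$, even an incoming one, into an available outgoing edge, so each splice strictly enlarges the closed tour and finiteness of $E$ forces termination at a closed tour using every edge; your isolated-vertex remark correctly disposes of the ``contains all vertices'' clause. One point worth praising rather than correcting: the paper uses the nonstandard convention that $deg^+(v)$ is the in-degree and $deg^-(v)$ the out-degree, and your stage-one contradiction (incoming edges used would number $deg^-(u)+1 = deg^+(u)+1 > deg^+(u)$) is stated consistently with that convention.
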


\begin{theorem}
The de Bruijn graph $B_k(n)$ is Eulerian.
\label{eul:th}
\end{theorem}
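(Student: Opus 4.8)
The plan is to apply the preceding characterization theorem: a directed graph is Eulerian if and only if its symmetrization is connected and $deg^+(v) = deg^-(v)$ for every vertex $v$. So it suffices to verify these two hypotheses for $B_k(n)$.

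First I would dispatch the connectivity hypothesis, which is essentially already in hand. The remark following the definition of $B_k(n)$ established that any two vertices $u, v$ (words of length $n$) are joined by the directed walk corresponding to the word $uv$, so $B_k(n)$ is connected as a directed graph. Since the symmetrization $sym(B_k(n))$ retains an undirected edge $\{x,y\}$ for every directed edge $(x,y)$, any directed walk projects onto an undirected walk between the same endpoints. Hence $sym(B_k(n))$ is connected.

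The bulk of the argument is the degree count, which I would do by exhibiting the in- and out-edges of an arbitrary vertex explicitly. Fix a vertex $v = v_1 v_2 \ldots v_n$. By definition the edges are the length-$(n+1)$ words, and $v$ is the source of precisely those edges having $v$ as their prefix of length $n$; these are exactly the words $v_1 v_2 \ldots v_n a$ as $a$ ranges over $\mathcal{A}_k$. Distinct choices of $a$ give distinct words, so there are exactly $k$ such edges and $deg^-(v) = k$. Symmetrically, $v$ is the target of precisely those edges having $v$ as their suffix of length $n$, namely the words $a v_1 v_2 \ldots v_n$ for $a \in \mathcal{A}_k$, giving $deg^+(v) = k$. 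Therefore $deg^+(v) = deg^-(v) = k$ for every vertex $v$.

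With both hypotheses verified, the characterization theorem immediately yields that $B_k(n)$ is Eulerian, completing the proof. I do not expect a genuine obstacle here: the connectivity was pre-established and the degree computation is a direct unwinding of the definition of $B_k(n)$. The only point deserving a word of care is that the $k$ candidate out-edges (and likewise in-edges) are genuinely distinct edges, which holds because different appended (resp. prepended) letters produce different length-$(n+1)$ words; this is what makes both degrees equal to $k$ rather than merely bounded by $k$.
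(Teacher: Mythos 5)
Your proposal is correct and follows exactly the paper's route: invoke the Eulerian characterization, cite the previously established connectivity of $B_k(n)$ (hence of its symmetrization), and compute $deg^+(v)=deg^-(v)=k$ for every vertex. You merely spell out the degree count in more detail than the paper's one-line remark, which is a welcome clarification but not a different argument.
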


\begin{proof}
We already showed that $B_k(n)$ is connected, therefore its symmetrization is connected. Note that $deg^+(u)=deg^-(v)=k$, so $B_k(n)$ is Eulerian.
\end{proof}

\begin{definition}
A Hamiltonian tour in a directed graph $G$ is a tour that visits each vertex of $G$ exactly once. A Hamiltonian cycle is a Hamiltonian tour that is a cycle. If a graph $G$ has a Hamiltonian cycle then we say that $G$ is Hamiltonian.
\end{definition}

\begin{theorem}
The line graph $\mathcal{L}G$ of an Eulerian directed graph $G$ is Hamiltonian.
\end{theorem}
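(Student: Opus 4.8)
The plan is to translate the Eulerian circuit of $G$ directly into a Hamiltonian cycle of $\mathcal{L}G$; the two objects are really the same walk viewed from two different vantage points, which is exactly why the line graph operation trades ``edges traversed once'' for ``vertices visited once.''

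First I would invoke the hypothesis: since $G$ is Eulerian, by definition there is a closed directed tour $(v_0, e_1, v_1, e_2, \ldots, e_m, v_m)$ with $v_m = v_0$ that passes through every directed edge of $G$ exactly once, so $m = \left|E(G)\right|$ and the edges $e_1, \ldots, e_m$ are pairwise distinct and exhaust $E(G)$. The central observation is then that consecutive edges in this tour are adjacent as vertices of $\mathcal{L}G$: for each $i$ with $1 \le i \le m-1$ we have $t(e_i) = v_i = s(e_{i+1})$, so by the definition of the line graph there is a directed edge $(e_i, e_{i+1})$ in $\mathcal{L}G$. Because the tour is closed, $t(e_m) = v_m = v_0 = s(e_1)$, which likewise yields an edge $(e_m, e_1)$ in $\mathcal{L}G$.

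Assembling these adjacencies, the sequence $(e_1, e_2, \ldots, e_m, e_1)$ is a closed directed walk in $\mathcal{L}G$. I would then verify the two defining properties of a Hamiltonian cycle. Since $V(\mathcal{L}G) = E(G)$ and the Eulerian tour uses each edge of $G$ exactly once, the vertices $e_1, \ldots, e_m$ of this walk are pairwise distinct and constitute \emph{all} of $V(\mathcal{L}G)$; hence the walk visits every vertex of $\mathcal{L}G$ exactly once and is genuinely a cycle. This is precisely a Hamiltonian cycle, so $\mathcal{L}G$ is Hamiltonian.

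I do not expect a serious obstacle here, as the argument is essentially a bookkeeping correspondence rather than a construction requiring estimates. The only points demanding care are the two verifications in the final step, namely that the closed walk is a true cycle (distinctness of the $e_i$, which follows from ``each edge exactly once'') and that it covers all vertices (which follows from ``all edges''); both are immediate consequences of the Eulerian property. One should also keep the direction conventions straight, matching $t(e_i) = s(e_{i+1})$ to the line-graph adjacency rule, so that the orientation of the cycle in $\mathcal{L}G$ is consistent.
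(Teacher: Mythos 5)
Your proof is correct and is essentially the same as the paper's: both take the closed Eulerian tour $(v_0, e_1, v_1, \ldots, e_m, v_0)$ and read off its edge sequence as the vertex sequence of a cycle in $\mathcal{L}G$, using $t(e_i)=s(e_{i+1})$ for line-graph adjacency. Your write-up just makes explicit the two verifications (distinctness and coverage of the $e_i$) that the paper leaves implicit.
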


\begin{proof}
Let $C= (v_0, e_1, v_1, e_2, \ldots, e_n, v_0)$ be a closed Eulerian tour in $G$. Note that $e_i = (v_{i-1}, v_i)$. Thus $f_i = (e_i, e_{i+1}) \in E(\mathcal{L}G)$ for $1 \le i \le n-1$, and $ f_n = (e_n, e_1) \in E(\mathcal{L}G)$.  So $(e_1, f_1, e_2, f_2,  \ldots, e_n, f_n, e_1)$ is a Hamiltonian cycle in $\mathcal{L}G$.
\end{proof}

The above theorem holds for both directed and undirected graphs.\\

\begin{theorem}
The de Bruijn graph $B_k(n)$ is Hamiltonian.
\end{theorem}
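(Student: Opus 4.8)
The plan is to realize $B_k(n)$ as the line graph of $B_k(n-1)$ and then invoke the two immediately preceding theorems. First I would establish the isomorphism of directed graphs $B_k(n) \cong \mathcal{L}B_k(n-1)$. By definition the vertices of $\mathcal{L}B_k(n-1)$ are the edges of $B_k(n-1)$, and the edges of $B_k(n-1)$ are precisely the words of length $n$ over $\mathcal{A}_k$. These are exactly the vertices of $B_k(n)$, so the two vertex sets coincide canonically.

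Next I would check that the edge sets agree under this identification. An edge of $\mathcal{L}B_k(n-1)$ is a pair $(e,f)$ of words of length $n$ with $t(e)=s(f)$, that is, the suffix of length $n-1$ of $e$ equals the prefix of length $n-1$ of $f$. Writing $e = au$ and $f = ub$ with $\left|u\right| = n-1$ and $a,b \in \mathcal{A}_k$, the overlap word $aub$ has length $n+1$; its prefix of length $n$ is $e$ and its suffix of length $n$ is $f$, so it is exactly the edge of $B_k(n)$ directed from $e$ to $f$. Conversely, every word of length $n+1$ factors uniquely in this way, so this yields a bijection between $E(\mathcal{L}B_k(n-1))$ and $E(B_k(n))$ that respects sources and targets. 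Hence $B_k(n) \cong \mathcal{L}B_k(n-1)$.

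Finally I would conclude the argument. By Theorem \ref{eul:th} the graph $B_k(n-1)$ is Eulerian, and by the preceding theorem the line graph of an Eulerian directed graph is Hamiltonian. Therefore $\mathcal{L}B_k(n-1) \cong B_k(n)$ is Hamiltonian, as desired.

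I expect the only genuine work to lie in the edge-correspondence step: one must unwind two different encodings of adjacency — the \emph{overlap} encoding of de Bruijn edges versus the \emph{head-meets-tail} encoding of line-graph edges — and verify they describe the same directed adjacency, including a brief confirmation that the source and target data match so that the map is a true isomorphism and not merely a bijection of edge sets. A minor loose end to address is the base case $n=1$, where $B_k(0)$ is the single-vertex graph $\{\epsilon\}$ carrying $k$ loops; this graph is still Eulerian, so the reduction remains valid there as well.
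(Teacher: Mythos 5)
Your proof is correct and follows essentially the same route as the paper: identify $B_k(n)$ with $\mathcal{L}B_k(n-1)$, then combine the fact that de Bruijn graphs are Eulerian with the theorem that the line graph of an Eulerian directed graph is Hamiltonian. The only (immaterial) difference is the base case: the paper exhibits an explicit Hamiltonian cycle $012\ldots(k-1)0$ in $B_k(1)$, whereas you push the reduction down to the one-vertex graph $B_k(0)$ with $k$ loops, which is indeed Eulerian; both work, and you additionally spell out the edge correspondence that the paper asserts without proof.
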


\begin{proof}
First consider $B_k(1)$. Note that $w=012\ldots (k-1)0$ corresponds to a Hamiltonian cycle in $B_k(1)$, so $B_k(1)$ is Hamiltonian. 
Suppose that $n \ge 2$. Note that $B_k(n)=\mathcal{L}B_k(n-1)$. By Theorem \ref{eul:th} we know that $B_k(n-1)$ is Eulerian, so $B_k(n)$ is Hamiltonian.
\end{proof}

Recall that a word of length $N$ over $\mathcal{A}_k$ with subword complexity $p_w(i)=\min\{k^i, N-i+1\}$ for $1 \le i \le N$ is called a de Bruijn word. 

Fix $n \ge 0$. Let $N=k^n+n-1$. Consider the de Bruijn graph $B_k(n)$. Let $P$ be a path obtained by removing one edge from a Hamiltonian cycle in $B_k(n)$. The length of $P$ is $k^n-1$ and it corresponds to a word $w$ of length $k^n+n-1$ that contains all the $k^n$ words of length $n$ over $\mathcal{A}_k$ as subwords, each exactly once. Thus $p_w(n)=k^n$.

Note that $p_w(i)=k^i$ for $1 \le i \le n$. Also, $p_w(n)=k^n=N-n+1$ and $p_w(i)= N-i+1$ for $n \le i \le N$. Therefore $w$ is a de Bruijn word.
 
Next we will show that any de Bruijn word $u$ of length $N=k^n+n-1$ over $\mathcal{A}_k$ corresponds to a Hamiltonian cycle in $B_k(n)$ with one edge removed.

By the definition of the de Bruijn word, $$p_u(n)=\min\{k^n, N-n+1\}=\min\{k^i, k^n+n-i\}.$$ In particular, $p_u(n)=k^n$. This means that $u$ contains  all the $k^n$ words of length $n$ over $\mathcal{A}_k$ as subwords, each exactly once. Therefore $u$ corresponds to a path in $B_k(n)$ of length $k^n-1$ that contains each vertex of $B_k(n)$ exactly once. Thus $u$ corresponds to a Hamiltonian cycle in $B_k(n)$ with one edge removed.

We showed a bijection between the set of de Bruijn words of length $N=k^n+n-1$ and Hamiltonian cycles in $B_k(n)$ with one edge removed. In particular we proved the following proposition. 

\begin{figure}[ht]
\begin{center}
\psfragscanon
\psfrag{100}{$100$}
\psfrag{110}{$110$}
\psfrag{001}{$001$}
\psfrag{011}{$011$}
\psfrag{000}{$000$}
\psfrag{111}{$111$}
\psfrag{010}{$010$}
\psfrag{101}{$101$}
\includegraphics[scale=.65]{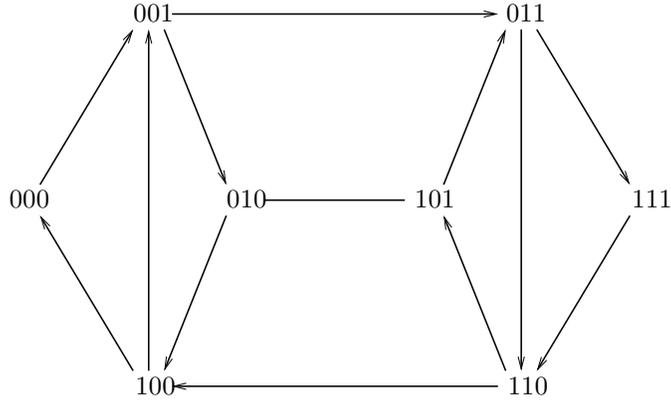}
\caption{The line graph for $B_2(2)$.}
\end{center}
\end{figure}

\begin{proposition}
For every $n > 0$, there exists a de Bruijn word over $\mathcal{A}_k$ of length $N = k^n + n -1$. 
\label{n:pr}
\end{proposition}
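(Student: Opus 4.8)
The plan is to build the required word directly from the combinatorial structure of the de Bruijn graph $B_k(n)$, leaning on the fact established just above that $B_k(n)$ is Hamiltonian. First I would fix $n > 0$, set $N = k^n + n - 1$, and take a Hamiltonian cycle in $B_k(n)$; deleting a single edge from it yields a Hamiltonian path $P$ visiting each of the $k^n$ vertices exactly once, so $P$ has exactly $k^n - 1$ edges. The point of doing this, rather than working with words directly, is that Hamiltonicity does all the heavy lifting, and the rest is translation and bookkeeping.

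Next I would pass from the path back to a word. By the correspondence between walks in $B_k(n)$ and words over $\mathcal{A}_k$ noted earlier, a walk with $m$ edges corresponds to a word of length $n + m$; applying this to $P$ with $m = k^n - 1$ produces a word $w$ with $\left|w\right| = n + (k^n - 1) = N$. Since $P$ meets every vertex exactly once, each of the $k^n$ words of length $n$ over $\mathcal{A}_k$ occurs in $w$ exactly once, whence $p_w(n) = k^n$.

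It then remains to verify that $p_w(i) = \min\{k^i, N - i + 1\}$ for all $1 \le i \le N$, which I would split at the threshold $i = n$. For $1 \le i \le n$, every word of length $i$ is a prefix of some word of length $n$, and since all $k^n$ words of length $n$ already occur in $w$, all $k^i$ words of length $i$ occur as well, giving $p_w(i) = k^i$; one checks $k^i \le N - i + 1$ on this range, so this equals the claimed minimum. For $n \le i \le N$, Theorem \ref{min:th} supplies the upper bound $p_w(i) \le N - i + 1$, and for the matching lower bound I would use that each length-$n$ subword of $w$ occurs exactly once: the starting position of any length-$i$ window with $i \ge n$ is pinned down by its length-$n$ prefix, so the $N - i + 1$ windows are pairwise distinct and $p_w(i) = N - i + 1$, which is again the minimum on this range. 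Combining both ranges shows $w$ satisfies Definition \ref{deBdef}.

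Given how much of the apparatus is already in place, there is no serious obstacle; the construction is essentially immediate from Hamiltonicity. The one step I would treat with a little care is the lower bound for $i \ge n$, i.e. confirming that the length-$i$ windows are genuinely all distinct. The cleanest justification is the observation that the unique occurrence of each length-$n$ subword forces distinct starting positions to carry distinct length-$i$ extensions; alternatively one could invoke the unimodality of Theorem \ref{main} together with the maximal-value computation of Proposition \ref{Nmax:prop}, but the direct prefix argument is more transparent here.
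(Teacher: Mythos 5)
Your proof is correct and follows essentially the same route as the paper: delete an edge from a Hamiltonian cycle in $B_k(n)$, translate the resulting Hamiltonian path into a word of length $k^n+n-1$ containing every length-$n$ word exactly once, and then verify $p_w(i)=\min\{k^i,N-i+1\}$ on the two ranges $i\le n$ and $i\ge n$. In fact you supply details the paper leaves implicit, namely the prefix argument for $i\le n$ and the observation that unique occurrence of length-$n$ subwords forces all length-$i$ windows to be distinct for $i\ge n$.
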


Next we generalize Proposition \ref{n:pr}.

\begin{theorem}
For every $N>0$, there exists a de Bruijn word of length $N$ over $\mathcal{A}_k$.
\end{theorem}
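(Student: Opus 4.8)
The plan is to reduce the existence of a de Bruijn word of length $N$ to a graph-theoretic statement about trails in a de Bruijn graph, and then to build the required trail by induction on its length. Given $N$, let $m$ be the largest integer with $k^m + m - 1 \le N$; from the choice of $m$ one checks that $k^m - 1 \le N - m \le k^{m+1} - 1$, that $\min\{k^j, N-j+1\} = k^j$ for $1 \le j \le m$, and that $\min\{k^j, N-j+1\} = N-j+1$ for $m < j \le N$. I claim it suffices to produce a trail (a walk with no repeated edges) in $B_k(m)$ that visits every vertex and uses exactly $N-m$ edges. Indeed, such a trail corresponds to a word $w$ of length $m + (N-m) = N$; since it visits every vertex, $w$ contains all $k^m$ words of length $m$, hence all words of length $j \le m$, so $p_w(j) = k^j$; and since no edge repeats, every length-$(m+1)$ subword of $w$ (which is exactly an edge of the trail) occurs once, whence every longer subword occurs at most once and $p_w(j) = N-j+1$ for $j \ge m+1$. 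Matching these against the displayed minima shows that $w$ is a de Bruijn word in the sense of Definition \ref{deBdef}.

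To build the trail I would induct on the edge count $\ell$, starting from $\ell = k^m - 1$. Since $B_k(m)$ is Hamiltonian (established above), removing one edge from a Hamiltonian cycle yields a Hamiltonian path: a vertex-covering trail with exactly $k^m - 1$ edges. This is precisely the de Bruijn word of Proposition \ref{n:pr} for $n=m$, so the base case is already in hand. For the inductive step I must show that any vertex-covering trail $P$ with $\ell$ edges, where $\ell < k^{m+1}$, can be enlarged to a vertex-covering trail with $\ell + 1$ edges; since enlarging can only preserve vertex-coverage, the content is simply to add one unused edge while keeping $P$ a trail.

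The crux, and the step I expect to be the main obstacle, is guaranteeing that an extra edge can always be appended. Here the key observation is a degree count at the terminal vertex $t$ of $P$: in a trail ending at $t$ the number of used in-edges exceeds the number of used out-edges by exactly one, or they are equal if $P$ is closed. Since $t$ has in-degree $k$ in $B_k(m)$, at most $k-1$ of its out-edges are used when $P$ is open, so an unused out-edge of $t$ exists and may be appended. If $P$ happens to be closed, I would instead pick any unused edge $(a,b)$, which exists because $\ell < k^{m+1}$, note that $a$ lies on $P$ (all vertices are covered) and hence has an unused out-edge, rotate the closed trail so that it begins and ends at $a$, and then append $(a,b)$. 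Either way $\ell$ increases by one, so by induction we obtain vertex-covering trails of every length in $[\,k^m - 1,\ k^{m+1}\,]$, and in particular of length $N - m$.

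Finally I would dispatch the small-$N$ degenerate range $1 \le N \le k-1$ (where $m = 0$) directly, taking $w = 012\cdots(N-1)$, and record the routine verification that the boundary values $p_w(m) = k^m$ and $p_w(m+1) = N-m$ equal $\min\{k^m, N-m+1\}$ and $\min\{k^{m+1}, N-m\}$ respectively, which follows from the inequalities on $N-m$ noted at the outset.
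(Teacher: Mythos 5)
Your proposal is correct, and it reaches the same reduction as the paper — both arguments boil down to producing a vertex-covering trail (the paper says ``tour'') with exactly $N-m$ edges in $B_k(m)$, starting from the same base case (a Hamiltonian path, i.e.\ Proposition \ref{n:pr}) and handling $N<k$ separately with $w=012\cdots(N-1)$; the complexity bookkeeping ($p_w(j)=k^j$ for $j\le m$, $p_w(j)=N-j+1$ for $j>m$, and the comparison with the minima) is also essentially identical. Where you genuinely diverge is in how the trail of the right length is built. The paper does it in one global step: delete the edges of a Hamiltonian cycle $C$ from $B_k(n)$, observe that the remaining graph has all in- and out-degrees equal to $k-1$ so each connected component is Eulerian, and then splice Eulerian circuits of enough components into $C$, truncating the last one to hit exactly $N-n$ edges. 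You instead grow the trail one edge at a time by an augmentation lemma: at an open terminal vertex $t$ the used in-edges exceed the used out-edges by one, so some out-edge of $t$ is unused and can be appended; if the trail is closed, rotate it to the source of any unused edge and append that edge. Your degree-parity argument and the rotation step are both sound, and the inequalities $k^m-1\le N-m\le k^{m+1}-1$ you need are exactly what the maximality of $m$ gives. The trade-off: your induction is more elementary — it never needs the structure of the complement $B_k(n)-C$ or the Eulerian-components observation — and it transparently yields vertex-covering trails of \emph{every} length in $[k^m-1,\,k^{m+1}]$, whereas the paper's construction is more explicit, exhibiting the desired word in a single picture rather than as the end of a chain of augmentations.
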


\begin{proof}
First consider the case when $N <k$. Let $w=0123 \ldots (N-1)$. Note that $p_w(i)=N-i+1$ for all $i$ such that $1 \le i \le N$. We have that $N <k$ and thus $N-i+1 < k^i$ for all $i >0$. It follows that $\min\{k^i, N-i+1\} = N-i+1$ for all $i>0$ and therefore $w$ is a de Bruijn word.
 
Suppose $N \ge k$. Let $n$ be the largest positive integer such that $ k^n+n-1\le N$. If $k^n+n-1=N$, the claim of the theorem follows from Proposition \ref{n:pr}.

Suppose $k^n+n-1< N$. Let $C = (v_0, e_1, v_1, e_2, \ldots, e_{k^n}, v_0)$ be a Hamiltonian cycle in $B_k(n)$. Now consider the graph $G = B_k(n) - \{e_1, e_2, \ldots, e_{k^n}\} $. Let $K_1, K_2, \ldots, K_l$ be the connected components of $G$ with $m_1, m_2, \ldots, m_l$ edges respectively. Note that $m_1 +m_2+ \ldots +m_l = k^{n+1}-k^n$. Since the indegree and outdegree of each vertex of $G$ is $k-1$, each connected component $K_i$ is Eulerian.
Let $C_1, C_2, \ldots ,C_l$ be Eulerian circuits in $K_1, K_2, \ldots, K_l$ respectively. On each Eulerian circuit $C_i$ choose a vertex $u_i$.  Let $r$ be the minimum positive integer such that $$\sum_{i=1}^r m_i \ge N-k^n-n.$$

We will construct a tour $T$ of length $N-n$ in $B_k(n)$ which contains all the vertices of $B_k(n)$. The tour $T$ will start at vertex $u_r$ and then follows the cycle $C$ until it reaches one of the vertices $u_i$, where $1 \le i \le r-1$. Once it reaches $u_i$, it follows the circuit $C_i$ around and back to $u_i$, and then continues along $C$ until it reaches $u_r$. The tour $T$ then follows the first $N-k^n-n- \sum_{i=1}^{r-1}m_i$ edges of $C_r$. Note that the tour $T$ contains all the vertices of $B_k(n)$ and $N-n$ distinct edges of $B_k(n)$. Let $w$ be the word that corresponds to $T$.

We claim that $w$ is a de Bruijn word. Note that $p_w(i)=k^i$ for $1 \le i \le n$. Also $p_w(n+1)=N-n$ and, in general, $p_w(i)=N-i+1$ for $n<i \le N$. Since $k^n+n-1 <N$, $\min\{k^i, N-i+1\}=k^i$ for $1 \le i \le n$ and $\min\{k^i, N-i+1\}=N-i+1$ for $n<i\le N$. Hence $p_w(i)=\min\{k^i, N-i+1\}$ for $1 \le i \le N$ and, by our definition, $w$ is a de Bruijn word of length $N$ over $\mathcal{A}_k$.

\end{proof}

\begin{figure}[ht]
\begin{center}
\psfragscanon
\psfrag{1}{$1$}
\psfrag{n}{$n$}
\psfrag{p}{$p_w(n)$}
\psfrag{N}{$N$}
\psfrag{L}{$\log_k(N)$}
\includegraphics[scale=.4]{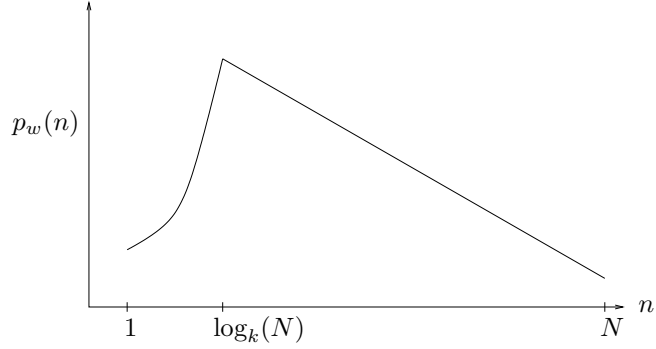}
\caption{The subword complexity function of a de Bruijn word of length $N$ over $\mathcal{A}_k$.} 
\end{center}
\end{figure}

We have just shown that for every $n \ge 0$, there exists a word $w$ of length $N$ over $\mathcal{A}_k$ with subword complexity $p_w(n)= \min\{k^n, N-n+1\}$ for $1 \le n \le N$. Theorem \ref{mineq:th} then follows from this and Theorem \ref{min:th}.


\bigskip
\section{Infinite Sturmian Words}
Before we develop a theory of finite words with low subword complexity, we need to discuss infinite words of low subword complexity. In this section we introduce infinite Sturmian words, which are aperiodic, infinite words of minimal subword complexity. Infinite Sturmian words appear in dynamical systems, and have a nice geometric interpretation. The notation in this section is taken from Lothaire \cite{L:2002}.\\

We denote by $\mathcal{A}^\mathbb{N}$ the set of right-infinite words, and $\mathcal{A}^\infty = \mathcal{A}^* \cup \mathcal{A}^\mathbb{N}$ is the set of all finite or infinite words. A finite word $u$ is a subword of an infinite word $w$ if $w=puq$ for $p,q \in \mathcal{A}^\infty$. 

\begin{definition}
An infinite word $w$ is ultimately periodic if $w=uv^\infty$ for finite words $u,v$ and $v \neq \epsilon$. If $w$ is a finite nonempty word, then $w^\infty = wwww\dots$ is called purely periodic. \\
\end{definition}

\begin{theorem}
Let $w$ be an infinite word.
\begin{enumerate}
\item Then $p_w(n) \le p_w(n+1)$ for $n \ge 1$.
\item If $\exists N$ such that $p_w(N)=p_w(N+1)$, then $w$ is ultimately periodic and $p_w(n)=p_w(N)$ for all $n \ge N$.
\item If $w$ is not ultimately periodic, then $p_w(n) \ge n+1$.
\end{enumerate}
\label{per:th}
\end{theorem}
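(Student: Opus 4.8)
The plan is to prove the three parts in the order stated, since part (3) will drop out of (1) and (2). The common engine is the elementary observation that, because $w$ is right-infinite, every factor of length $n$ occurs at some position $i$ and is therefore the length-$n$ prefix of the factor $w[i,i+n]$ of length $n+1$. Thus the map $\rho\colon Sub_w(n+1)\to Sub_w(n)$ sending a word to its prefix of length $n$ is surjective, and comparing cardinalities of finite sets gives $p_w(n)\le p_w(n+1)$. This settles part (1).

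For part (2) I would first show that the complexity stabilizes. The hypothesis $p_w(N)=p_w(N+1)$ forces the surjection $\rho$ at level $N+1\to N$ to be a bijection, so every factor of length $N$ has exactly one right extension, i.e.\ valence $1$. I claim this propagates upward: if $u$ is a factor of length $N+1$, its length-$N$ suffix $v$ has a unique right extension $va$, and any length-$(N+2)$ extension $ua'$ of $u$ has length-$(N+1)$ suffix $va'$, which forces $a'=a$. Hence every factor of length $N+1$ also has valence $1$, and by induction every factor of length $m\ge N$ has a unique right extension. The map $\rho$ is then a bijection at every level $m\ge N$, so $p_w(m)=p_w(N)$ for all $m\ge N$.

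The heart of the argument, and the step I expect to be the main obstacle, is deducing ultimate periodicity from the statement that every factor of length $N$ has valence $1$. Since the unique right extension of a length-$N$ factor determines a single next letter, the map $w[i,i+N-1]\mapsto w[i+N]$ is well defined, and hence so is the successor map $\sigma\colon Sub_w(N)\to Sub_w(N)$ sending $w[i,i+N-1]$ to $w[i+1,i+N]$. Writing $f_i=w[i,i+N-1]$, the sequence $f_1,f_2,\dots$ is the forward orbit of $f_1$ under $\sigma$ inside the finite set $Sub_w(N)$, so by the pigeonhole principle there are indices $i<j$ with $f_i=f_j$. Determinism of $\sigma$ then gives $f_{i+t}=f_{j+t}$ for all $t\ge 0$; comparing first letters yields $w[i+t]=w[i+t+p]$ for all $t\ge 0$, where $p=j-i$. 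Thus $w=uv^\infty$ with $u=w[1,i-1]$ and $v=w[i,i+p-1]$, so $w$ is ultimately periodic. The delicate points are making $\sigma$ well defined (which is exactly what valence $1$ buys us) and extracting a genuine period of $w$ from a repeated length-$N$ window rather than merely a repeated factor.

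Finally, part (3) follows by contraposition of (2) together with (1). If $w$ is not ultimately periodic, then $p_w(n)=p_w(n+1)$ can never hold, so by part (1) the sequence is strictly increasing: $p_w(n+1)\ge p_w(n)+1$. Moreover a non-ultimately-periodic infinite word cannot equal $a^\infty$ for a single letter $a$, so it contains at least two distinct letters and $p_w(1)\ge 2$. Chaining the strict inequalities gives $p_w(n)\ge p_w(1)+(n-1)\ge n+1$, as required.
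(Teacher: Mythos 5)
Your proof is correct, and its overall skeleton matches the paper's: part (1) via surjectivity of the prefix map from $Sub_w(n+1)$ onto $Sub_w(n)$, part (2) by first showing that valence $1$ at length $N$ propagates to all lengths $\ge N$ (so the complexity stabilizes) and then extracting ultimate periodicity, and part (3) by contraposition together with $p_w(1)\ge 2$ and chaining strict inequalities. The one place you genuinely diverge is the periodicity extraction in part (2). The paper argues that some length-$N$ factor occurs infinitely often, picks two \emph{non-overlapping} occurrences to write $w=uvu'vw'$, and then uses the everywhere-valence-$1$ property to force $w'$ to begin with $u'v$, iterating blockwise to conclude $w=u(u'v)^\infty$. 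You instead package the valence-$1$ property as a well-defined successor map $\sigma$ on the finite set $Sub_w(N)$, apply pigeonhole to the orbit $f_1,f_2,\dots$ of sliding windows, and let determinism propagate the repetition letter by letter to get the period $p=j-i$. Both arguments rest on the same two ingredients (pigeonhole plus determinism coming from valence $1$), but your formulation buys a small simplification: it needs only a single repeated window, with no requirement that the occurrences be non-overlapping and no appeal to a factor occurring infinitely often, whereas the paper needs non-overlap to make sense of its block decomposition. The paper's version, in exchange, exhibits the preperiod and period explicitly in terms of the factorization $w=uvu'vw'$, which it reuses in spirit elsewhere (e.g.\ in Lemma 5.1 of the paper, whose proof is again a "repeated window plus determinism" argument close to yours).
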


\begin{proof}
Let $w$ be an infinite word.
\begin{enumerate}
\item Since $w$ is infinite, we will always be able to prolong a subword on the right, and therefore each subword of length $n$ is a prefix for at least one subword of length $n+1$ for all $n \ge 1$. Distinct subwords of length $n$ of $w$ are prefixes of distinct subwords of length $n+1$ of $w$. Thus $p_w(n) \le p_w(n+1)$ for all $n \ge 1$.

\item Let $N$ be an integer such that $p_w(N)=p_w(N+1)$. This means that there are no special subword of length $N$ in $w$. Let $n >N$. Then there are no special subwords of length $n$ in $w$ because otherwise the suffix of length $N$ of such a special subword would be special. We conclude that $p_w(n)=p_w(n-1)=\ldots = p_w(N)$.

Since $w$ is infinite, there exists a subword of length $N$ in $w$ that occurs an infinite number of times in $w$. At least two such occurrences are non-overlapping. Then $w=uvu'vw'$, where $u$ and $u'$ are finite words and $w'$ is an infinite word. Since there are no special subwords of length $\ge N$ in $w$, every occurrence of a word of length $\ge N$ in $w$ is followed by the same letter in $w$. Thus $w'$ has prefix $u'v$, and moreover, each occurrence of $u'v$ is followed by $u'v$. Hence $w=u(u'v)^\infty$ and $w$ is ultimately periodic.

\item By part (2) it follows that if $w$ is not ultimately periodic, then $p_w(n) < p_w(n+1)$ for all $n \ge 1$. Thus $p_w(1) \ge 2$ and $p_w(n+1) \ge p_w(n)+1$. It follows by induction that $p_w(n) \ge n+1$ for all $n \ge 1$.
\end{enumerate}
\end{proof}

\begin{proposition}
Let $w$ be a right-infinite word. The following statements are equivalent:
\begin{enumerate}
\item $w$ is ultimately periodic,
\item there exists $N$ such that $p_w(N)=p_w(N+1)$,
\item there exists $N$ such that, for all $n \ge N$, $p_w(n) = p_w(N)$,
\item there exists $C$ such that $p_w(n) < C$ for all $n$.
\end{enumerate}
\end{proposition}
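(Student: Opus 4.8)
The plan is to prove this four-way equivalence by establishing a cycle of implications, leaning heavily on Theorem \ref{per:th}, which already does most of the heavy lifting. I would prove $(1) \Rightarrow (3) \Rightarrow (4) \Rightarrow (2) \Rightarrow (1)$, since each arrow in this order is either immediate or a direct citation of an earlier result. The main structural observation is that Theorem \ref{per:th} already contains the deep content: part (2) of that theorem gives us the equivalence between ultimate periodicity and the existence of a stabilization point $N$ with $p_w(N) = p_w(N+1)$.

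First I would prove $(1) \Rightarrow (3)$. Suppose $w = uv^\infty$ is ultimately periodic with $|v| = q$. Every subword of length $n$ of $w$ occurs at a starting position that, once we pass the prefix $u$, falls into one of $q$ residue classes modulo the period; hence for all sufficiently large $n$ the number of distinct subwords is eventually constant. More cleanly, part (1) of Theorem \ref{per:th} shows $p_w$ is nondecreasing, and if $p_w$ were strictly increasing for all $n$ then part (2) (contrapositive) would force $w$ to be non-ultimately-periodic; so there must exist an $N$ with $p_w(N) = p_w(N+1)$, and then part (2) of Theorem \ref{per:th} gives $p_w(n) = p_w(N)$ for all $n \ge N$, which is exactly statement $(3)$.

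Next, $(3) \Rightarrow (4)$ is essentially trivial: if $p_w(n) = p_w(N)$ for all $n \ge N$, then since $p_w$ is nondecreasing by Theorem \ref{per:th}(1), the finitely many values $p_w(1), \ldots, p_w(N)$ together with the constant tail are all bounded, so taking $C = p_w(N) + 1$ works. Then $(4) \Rightarrow (2)$ follows from the nondecreasing property as well: if $p_w(n) < C$ for all $n$, then the nondecreasing integer sequence $p_w(n)$ is bounded above, hence eventually constant, so there exists $N$ with $p_w(N) = p_w(N+1)$, which is statement $(2)$. Finally $(2) \Rightarrow (1)$ is precisely the content of part (2) of Theorem \ref{per:th}, giving ultimate periodicity.

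I do not anticipate a genuine obstacle here, since the proposition is a repackaging of Theorem \ref{per:th} into a clean equivalence form; the real work was done in proving that theorem. The only point requiring a little care is making sure the ``eventually constant'' argument in $(4) \Rightarrow (2)$ is stated correctly: a nondecreasing sequence of positive integers bounded above must stabilize, and the first place it fails to strictly increase gives the desired $N$. If one wanted to avoid even invoking Theorem \ref{per:th}(1) twice, an alternative is to prove the implications $(2) \Leftrightarrow (1)$ and $(2) \Leftrightarrow (3)$ directly from the theorem and then note $(3) \Rightarrow (4) \Rightarrow (2)$ closes the loop, but the single cyclic chain above is the most economical presentation.
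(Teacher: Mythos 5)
Your cyclic structure $(1) \Rightarrow (3) \Rightarrow (4) \Rightarrow (2) \Rightarrow (1)$ is sound, and the last three arrows are correct as you state them; the paper itself disposes of the proposition with a single sentence (``The proof follows from Theorem \ref{per:th}''), so your explicit decomposition is more careful than the original. However, your preferred (``more cleanly'') argument for $(1) \Rightarrow (3)$ is circular. Part (2) of Theorem \ref{per:th} says: if $p_w(N)=p_w(N+1)$ for some $N$, then $w$ is ultimately periodic. Its contrapositive is: if $w$ is \emph{not} ultimately periodic, then $p_w$ is strictly increasing. What you invoke is the reverse implication --- strictly increasing forces $w$ to be non-ultimately-periodic --- which is the \emph{converse} of part (2), and that converse is precisely the implication $(1) \Rightarrow (2)$ that your cycle is supposed to establish. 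So the ``clean'' version assumes the very thing being proved.

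The good news is that your first argument for $(1) \Rightarrow (3)$ is exactly what is needed: if $w=uv^\infty$ with $|v|=q$, then every occurrence of a subword either starts inside $u$ or starts at a position determined modulo $q$, so $p_w(n) \le |u|+q$ for all $n$. This boundedness is the one piece of content genuinely missing from Theorem \ref{per:th} --- none of its three parts asserts that ultimately periodic words have bounded complexity, and no contrapositive of them yields it. Keep that bound, combine it with monotonicity (part (1) of the theorem) to conclude the integer sequence $p_w(n)$ cannot be strictly increasing, obtain an $N$ with $p_w(N)=p_w(N+1)$, and then quote part (2) for the constant tail; delete the ``more cleanly'' sentence entirely. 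With that repair the proof is complete and closes a gap that the paper's one-line citation quietly glosses over.
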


\begin{proof}
The proof follows from Theorem \ref{per:th}.
\end{proof}

\begin{definition}
A Sturmian word $w$ is a right-infinite word satisfying $p_w(n)=n+1$ for all integers $n \ge 0$. 
\label{51:def}
\end{definition}

Note that since $p_w(1)=2$, infinite Sturmian words always contain exactly two distinct letters. 

The following definition of a Sturmian word is equivalent to Definition \ref{51:def}.

\begin{definition}
An infinite word is Sturmian if it is binary and there is exactly one special subword of each length.
\end{definition}

It follows from Theorem \ref{per:th} that an infinite Sturmian word has the lowest subword complexity among aperiodic infinite words. 
\begin{definition}
The height of a finite word $w$ over $\mathcal{A}_2 = \{0,1\}$ is the number $h(w)$ of occurrences of the letter 1 in $w$. Given two words $u$ and $v$ of the same length, their balance $\delta(u,v)$ is the number
\[ \delta(u,v) =  \left| h(u) - h(v) \right| .\]
\end{definition}

\begin{definition}
We say a set of words $X$ is balanced if for $u,v \in X$, 
\[\left|u\right| = \left|v\right| \Rightarrow \delta(u,v) \le 1.\]
A finite or infinite word $w$ is balanced if its set of subwords $X=Sub(w)$ is balanced.\\
\end{definition}

The statements of Propositions \ref{unbal:pr}, \ref{bal:pr}, \ref{rec:pr}, \ref{pref} and Theorem \ref{SBA:th} are taken from Lothaire \cite{L:2002} and Fogg \cite{PF:2002}, and the proofs of the statements have been modified for this paper.
 
\begin{proposition}
If $w$ is an unbalanced infinite word, then there exists a subword $w'$ of $w$ such that $0w'0$ and $1w'1$ are both subwords of $w$.
\label{unbal:pr}
\end{proposition}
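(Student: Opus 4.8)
The plan is to argue by an extremal (shortest counterexample) principle on pairs of equal-length subwords. Since $w$ is unbalanced, by definition its set of subwords is not balanced, so there exist subwords $u,v$ of $w$ with $|u|=|v|$ and $\delta(u,v)\ge 2$. Among all such pairs I would choose one whose common length $n$ is minimal, and, after possibly swapping $u$ and $v$, assume $h(u)\ge h(v)$, so that $h(u)-h(v)\ge 2$. The goal is to show that such a minimal pair must have the rigid form $u=1u'1$ and $v=0v'0$ with $u'=v'$, whence $w'=u'=v'$ is the desired central subword.

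First I would pin down the boundary structure by a stripping argument. If $u$ and $v$ began with the same letter, deleting it would produce a strictly shorter pair with the same height difference, contradicting minimality; hence their first letters differ, and by the symmetric argument their last letters differ. Quantifying how deleting a first (or last) letter changes $h(u)-h(v)$, and using that the shortened pair must have balance $\le 1$, forces the difference to be exactly $2$ and forces $u=1u'1$ and $v=0v'0$. Comparing heights then yields $|u'|=|v'|=n-2$ and $h(u')=h(v')$.

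The heart of the proof, and the step I expect to be the main obstacle, is upgrading the numerical identity $h(u')=h(v')$ to the genuine word equality $u'=v'$, which does not follow from equal heights alone. Here I would assume $u'\neq v'$ and track the prefix height difference $f(m)=h(u'[1,m])-h(v'[1,m])$, which satisfies $f(0)=f(n-2)=0$. Comparing the equal-length subwords $u'[c+1,d]$ and $v'[c+1,d]$ (each shorter than $n$) and invoking minimality gives the window bound $|f(d)-f(c)|\le 1$ for all $c\le d$. Prolonging the first point of disagreement on the left by the known initial letters $1$ and $0$ and again using minimality forces $f=-1$ at that first difference; the symmetric prolongation on the right at the last point of disagreement forces that step to be a descent, which together with $f(n-2)=0$ makes $f$ equal to $+1$ immediately before it. Thus $f$ attains both $-1$ and $+1$, contradicting the window bound $|f(d)-f(c)|\le 1$. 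This contradiction yields $u'=v'$, and setting $w'=u'$ finishes the argument, since then $1w'1=u$ and $0w'0=v$ are both subwords of $w$.
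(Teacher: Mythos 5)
Your proof is correct, and it takes a genuinely different --- and in fact more careful --- route than the paper's. You run an extremal argument: take a shortest pair $u,v$ of equal-length subwords with $\delta(u,v)\ge 2$, and use minimality three separate times: to force the boundary structure $u=1u'1$, $v=0v'0$ with $h(u')=h(v')$; to get the window bound $|f(d)-f(c)|\le 1$; and, via the prolongations by the known outer letters, to force $f=-1$ at the first disagreement and $f=+1$ just before the last one (note the last disagreement lies strictly after the first, since $f$ must climb back from $-1$ to $f(n-2)=0$, so the window bound does apply and gives the contradiction). The paper instead takes an \emph{arbitrary} unbalanced pair, strips the common prefix so that the first letters differ, lets $j$ be minimal with $\delta(u_1\ldots u_j,\,v_1\ldots v_j)=2$, and asserts that $u_1=u_j\neq v_1=v_j$ with $u_i=v_i$ for $2\le i\le j-1$, so that the pattern $1w'1$, $0w'0$ appears as the length-$j$ prefixes. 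That assertion is exactly what the ``heart'' of your proof labors to establish, and without your minimality hypothesis it is actually false: for $u=1011$, $v=0100$ the paper's hypotheses hold (distinct first letters, $\delta=2$), the minimal $j$ is $4$, yet $u_2\neq v_2$, because the prefix difference dips back to $0$ before rising to $2$; the length-$j$ prefixes are $1(01)1$ and $0(10)0$, which share no common interior word. (The paper's argument can be repaired by restarting at the last index before $j$ where the prefix difference vanishes, producing the pattern as an interior block; your global minimality accomplishes the same thing in one stroke.) So the paper's route, once repaired, is shorter and finds the pattern inside any given unbalanced pair, while yours is self-contained and airtight as written, and yields extra structural information: the extremal pair has balance exactly $2$ and is literally $1w'1$, $0w'0$ for a single word $w'$.
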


\begin{proof}
Let $w$ be an unbalanced infinite word. Then there exist two subwords $u$ and $v$ of $w$ such that $\left|u\right| = \left|v\right|$ and $\delta(u,v) \ge 2$. Let $m$ be the first position in which $u$ and $v$ differ. Let $u'$ and $v'$ be the suffixes of length $\left|u\right|-m+1$ of $u$ and $v$ respectively. Note that $\left|u'\right|=\left|v'\right|$, $\delta(u',v') \ge 2$ and the first character of $u'$ and $v'$ are distinct. By replacing $u,v$ with $u',v'$ we can assume, without loss of generality, that the first characters of $u$ and $v$ are distinct. 
\par Let $u=u_1u_2\ldots u_n$ and $v=v_1v_2 \ldots v_n$ where $u_i, v_i \in \mathcal{A}_k$. Then note that $\delta(u_1,v_1)=1$. Now since $\delta(u,v) \ge 2$, there exists a minimal $j$ such that $$\delta(u_1u_2\ldots u_j, v_1v_2 \ldots v_j)=2.$$  Then $u_1=u_j \neq v_1=v_j$, and $u_i=v_i$ for $2 \le i \le j-1$. Let $w' = u_2 u_3 \ldots u_{j-1}$. Then either $u_1u_2\ldots u_j=0w'0$ and $ v_1v_2 \ldots v_j=1w'1$ or  $u_1u_2\ldots u_j=1w'1$ and $ v_1v_2 \ldots v_j=0w'0$. Since $u_1u_2\ldots u_j$ and $ v_1v_2 \ldots v_j$ are subwords of $w$, the claim follows.\\
\end{proof}

\begin{proposition}
\label{bal:pr}
Let $w$ be an infinite word and let $Sub(w)$ be the set of all subwords of $w$. If $Sub(w)$ is balanced, then for all $n > 0$, $$p_w(n) \le n+1.$$
\end{proposition}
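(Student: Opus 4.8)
The plan is to reduce the bound to a statement about \emph{special} (right-special) factors and then to forbid two of them of the same length by a direct height computation; throughout I treat $w$ as a word over $\mathcal{A}_2$, which is the setting in which height and balance are defined. First I would record the first-difference identity for an infinite word: since every factor of an infinite word extends to the right, each length-$n$ factor is the length-$n$ prefix of at least one length-$(n+1)$ factor, and counting length-$(n+1)$ factors by their length-$n$ prefixes gives $p_w(n+1)=\sum_{u\in Sub_w(n)}|R_u|$. Hence $p_w(n+1)-p_w(n)=\sum_{u}(|R_u|-1)$, and over a binary alphabet $|R_u|\le 2$, so this sum counts exactly the special factors of length $n$ (each contributing $1$). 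Since $p_w(1)\le 2$, telescoping yields $p_w(n)\le 2+\sum_{m=1}^{n-1}\#\{\text{special factors of length }m\}$, so it suffices to prove that a balanced word has \emph{at most one} special factor of each length.

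For that implication I would argue by contradiction. Suppose $u\neq v$ are distinct special factors of length $n$, so $u0,u1,v0,v1\in Sub(w)$. Let $z$ be the longest common suffix of $u$ and $v$. Because $u\neq v$ and $|u|=|v|=n$, $z$ is a \emph{proper} suffix of each, so there are letters $a,b$ immediately preceding $z$ inside $u$ and $v$ respectively; maximality of $z$ forces $a\neq b$, and as the alphabet is binary $\{a,b\}=\{0,1\}$. The key step is then to extend to the right: since $u$ ends in $az$ and $u0,u1$ are factors, both $az0$ and $az1$ are factors (as suffixes of $u0$ and $u1$), and likewise $bz0,bz1$ are factors. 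With $\{a,b\}=\{0,1\}$ this exhibits $0z0,0z1,1z0,1z1$ all as subwords of $w$.

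To finish I would compare heights of $0z0$ and $1z1$: these are subwords of equal length $|z|+2$ with $h(1z1)-h(0z0)=2$, hence $\delta(0z0,1z1)=2$, precisely the obstruction of Proposition \ref{unbal:pr}, so $Sub(w)$ is not balanced, contradicting the hypothesis. Thus no two special factors share a length, and the telescoping bound gives $p_w(n)\le n+1$ for all $n>0$. I expect the main obstacle to be the middle step: one must ensure the single differing letter is genuinely \emph{internal} to $u$ and $v$, which is exactly why it matters that $z$ is a \emph{proper} common suffix. It is this internality that lets the right-extensions of $u$ and $v$ produce the block $0z0$ alongside $1z1$, and hence a height gap of $2$; merely knowing that $z$ is bispecial is not enough, since the four one-sided extensions $0z,1z,z0,z1$ only pin the heights down to a window of size one.
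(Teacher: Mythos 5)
Your proof is correct, but it is organized differently from the paper's. The paper argues by contradiction at the smallest length $n$ for which the bound fails: there $p_w(n)-p_w(n-1)\ge 2$, and since over a binary alphabet each factor admits at most two left-extensions, pigeonhole produces two distinct \emph{left}-special factors $u,u'$ of length $n-1$; taking $z$ to be their longest common prefix, the words $0z0$ (a prefix of $0u$) and $1z1$ (a prefix of $1u'$) are both subwords, contradicting balance exactly as in your height computation. You instead prove the stronger structural lemma that a balanced binary word has at most one \emph{right}-special factor of each length (via the longest common suffix and right-extensions), and then convert this into the bound through the exact increment identity $p_w(n+1)-p_w(n)=$ (number of right-special factors of length $n$) together with telescoping from $p_w(1)\le 2$. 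The two central mechanisms are mirror images of each other, so neither is more elementary, but the surrounding structure differs: the paper's minimal-counterexample argument is more local and needs only a pigeonhole estimate, while yours isolates a reusable lemma --- indeed, your uniqueness-of-special-factors argument is precisely the one the paper itself deploys later, in the proof of Proposition \ref{fibinSt}, to show that the Fibonacci word has at most one special subword of each length, so your route effectively unifies the two proofs. Two minor remarks: you derive all four words $0z0,0z1,1z0,1z1$ when only $0z0$ and $1z1$ are needed for the contradiction; and your explicit restriction to $\mathcal{A}_2$ is the correct reading of the statement, since the paper defines height and balance only for binary words (its own proof makes the same implicit assumption).
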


\begin{proof}
The result is obvious for $n=1$. If $n=2$ the claim holds because both $00$ and $11$ cannot be elements of $Sub(w)$. Assume for contradiction that $n \ge 3$ is the smallest integer for which the statement is false. Then $p_w(n-1) \le n$ and $p_w(n) \ge n+2$. For each $v \in Sub_w(n)$, its suffix of length $n-1$ is in $Sub_w(n-1)$. So there exist two distinct words $u, u' \in Sub_w(n-1)$ such that $0u, 1u, 0u', 1u' \in Sub_w(n)$. Since $u \neq u'$, there exists a word $z$ such that $z0$ and $z1$ are prefixes of $u$ and $u'$. But then $0z0$ and $1z1$ are words in $Sub(w)$, showing that $Sub(w)$ is unbalanced.\\
\end{proof}

\begin{proposition}
An infinite Sturmian word $w$ is recurrent, that is, every subword that occurs in $w$ occurs an infinite number of times.
\label{rec:pr}
\end{proposition}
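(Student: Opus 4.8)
The plan is to reduce recurrence to a statement about \emph{prefixes} and then exploit the rigidity of the complexity $p_w(n)=n+1$ by counting special factors. First I would observe that it suffices to show every prefix of $w$ occurs infinitely often: every subword sits inside some prefix (if $u$ begins at position $i$, it lies in the prefix of length $i+|u|-1$), so each occurrence of a sufficiently long prefix drags an occurrence of $u$ with it, and $u$ then recurs as well; conversely prefixes are subwords, so the two formulations are equivalent. A short secondary reduction lets me replace ``infinitely often'' by ``at least twice'': if some prefix $P_n$ occurred only finitely often, then for every $m\ge n$ the second occurrence of the prefix $P_m$ of length $m$ would start at one of finitely many positions, so some fixed positive shift would align $w$ with arbitrarily long prefixes of itself, forcing $w$ to be purely periodic and contradicting Theorem \ref{per:th} (since $p_w(n)=n+1$ is strictly increasing, $w$ is not ultimately periodic). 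Thus the whole proof comes down to showing that \emph{every prefix of $w$ occurs at least twice}.

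The heart of the matter is a counting argument on left-special factors. Because $w$ is infinite to the right, every length-$m$ factor has at least one right extension, and the identity $p_w(m+1)-p_w(m)=1$ together with the binary alphabet forces a \emph{unique} right-special factor $u^*_m$ of each length. I would then count \emph{left} extensions: writing each length-$(m+1)$ factor uniquely as $bu$ with $u$ its length-$m$ suffix gives $p_w(m+1)=\sum_{u}\ell(u)$, where $\ell(u)$ is the number of letters that may precede $u$. Every length-$m$ factor has $\ell(u)\ge 1$ except possibly the prefix $P_m$, whose value is $\ell(P_m)=0$ precisely when $P_m$ occurs only once. Feeding this into $p_w(m+1)-p_w(m)=1$ produces the dichotomy that runs the proof: if $P_m$ recurs there is exactly one left-special factor of length $m$, whereas if $P_m$ occurs only once there are exactly two (both different from $P_m$).

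To finish, suppose some prefix occurs only once and let $n^*$ be the largest length for which $P_{n^*}$ still recurs; this exists, since the first letter must recur (else $w$ is eventually constant, hence ultimately periodic), and ``occurs only once'' propagates to all longer prefixes. Comparing the initial occurrence of $P_{n^*}$ with a later one shows $P_{n^*}$ is right-special, so $P_{n^*}=u^*_{n^*}$. The dichotomy gives a single left-special factor $\lambda$ at length $n^*$ and exactly two at length $n^*+1$; since a left-special factor has a left-special prefix, both of the latter must be $\lambda 0$ and $\lambda 1$, and their being factors forces $\lambda$ to be right-special, so $\lambda=u^*_{n^*}=P_{n^*}$. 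But then $P_{n^*+1}=\lambda c$, where $c$ is the next letter of $w$, is one of these two left-special factors, contradicting the fact that $P_{n^*+1}$ occurs only once and can therefore be preceded by no letter at all. This contradiction shows every prefix occurs at least twice, and the two reductions above then yield recurrence.

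The step I expect to be the main obstacle is the left-extension count, and in particular keeping honest track of the one exceptional factor, the prefix, whose left valence may be $0$. The entire argument hinges on the asymmetry between the two ends of a right-infinite word: right extensions always exist, giving a unique right-special factor of each length, while the prefix may fail to extend on the left, and it is exactly this defect that the one-versus-two dichotomy detects and finally turns into a contradiction.
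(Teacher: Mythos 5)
Your proof is correct, but it takes a genuinely different route from the paper's. The paper's argument is a three-line reduction to Theorem \ref{per:th}: if a factor $u$ of length $n$ occurred only finitely often, one could delete a finite prefix of $w$ to obtain an infinite tail $v$ avoiding $u$; then $p_v(n)\le p_w(n)-1=n$, so part (3) of that theorem forces $v$, and hence $w$ itself, to be ultimately periodic, contradicting the aperiodicity of Sturmian words. You never pass to a tail word; instead you reduce recurrence to the claim that every prefix occurs at least twice, and then run a counting argument on special factors: since $p_w(m+1)-p_w(m)=1$ on a binary alphabet, there is a unique right-special factor of each length, while counting left extensions (with the exceptional term $\ell(P_m)=0$ when the prefix $P_m$ occurs only once) gives your dichotomy of one versus two left-special factors; the two counts collide at the maximal length $n^*$ whose prefix recurs, since $P_{n^*}$ is forced to be the unique right-special factor and $P_{n^*+1}$ is then forced to be left-special, hence to recur. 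Both arguments are sound. The paper's is shorter because Theorem \ref{per:th}(3) already encapsulates all the needed rigidity of the hypothesis $p_w(n)=n+1$; yours re-derives that rigidity by hand at the level of special factors, which costs more work (two pigeonhole reductions plus the left-extension bookkeeping) but only invokes Theorem \ref{per:th} for aperiodicity and pure periodicity, not for its complexity-collapse statement, and it yields a structural by-product of independent interest: the exact count of left-special factors of each length and the identification of the recurring prefix with the right-special factor at the critical length.
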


\begin{proof}
Let $w$ be an infinite Sturmian word. Assume for contradiction that a subword $u$ of length $n$ occurs in $w$ only a finite number of times, and assume that $u$ does not occur after the $N$th letter of $w$. Now let $v$ be the right-infinite word obtained by removing the prefix of length $N$ from $w$. Then $v$ is contained in $w$ but does not contain $u$ as a subword, and so $p_v(n) \le n$, but, by Theorem \ref{per:th}, this implies that $v$ is ultimately periodic, a contradiction.\\
\end{proof}

\begin{lemma}
Let $w$ be an infinite word, let $n \ge 1$, and let $c $ be the number of subwords of length $n$ and valence 1 in $w$. If $w$ has a subword $u$ of length $n+c$ whose subwords of length $n$ are all of valence 1, then $w$ is eventually periodic. 
\label{conserv:lm}
\end{lemma}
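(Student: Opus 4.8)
The plan is to combine a pigeonhole count on the length-$n$ windows inside $u$ with the observation that a valence-$1$ subword has a uniquely determined right-extension in $w$; once such a window repeats inside $u$, the letters of $w$ are forced to recur periodically.

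First I would locate $u$ inside $w$, say $u = w[a, a+n+c-1]$, and look at its length-$n$ factors, the ``windows'' $W_m = w[a+m, a+m+n-1]$ for $0 \le m \le c$. There are exactly $(n+c)-n+1 = c+1$ of them, each is a length-$n$ subword of $w$, and by hypothesis each has valence $1$. Since $w$ has only $c$ distinct length-$n$ subwords of valence $1$, the pigeonhole principle produces indices $i' < j'$ in $\{0,\dots,c\}$ with $W_{i'} = W_{j'}$; write $p = j'-i' \ge 1$. Re-indexing from position $a+i'$, I set $V_m = w[a+i'+m, a+i'+m+n-1]$ for $m \ge 0$, so that $V_0 = V_p$, and $V_0,\dots,V_p$ all lie inside $u$ (their start offsets $i',\dots,j'$ are all $\le c$) and hence all have valence $1$.

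The engine of the proof is that a valence-$1$ word of length $n$ determines a unique follower letter in $w$, so the next window is a function of the current one: if $V_m$ has valence $1$, then $V_{m+1}$ is obtained from $V_m$ by deleting its first letter and appending the unique letter that follows $V_m$ in $w$ (here I use that $w$ is infinite, so every window is actually followed by a letter). I would then prove by strong induction on $m$ the joint statement that $V_m$ has valence $1$ and that $V_m = V_{m-p}$ for all $m \ge p$. The base cases $0 \le m \le p$ hold because those windows lie in $u$ and $V_0 = V_p$. In the inductive step for $m > p$, the window $V_{m-1} = V_{m-1-p}$ is a single valence-$1$ word, so applying the deterministic transition forces $V_m = V_{m-p}$, and then $V_m$ inherits valence $1$ from $V_{m-p}$. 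Consequently $(V_m)_{m\ge 0}$ is purely periodic with period $p$, and reading off first letters gives $w[a+i'+m] = w[a+i'+m+p]$ for all $m \ge 0$. Thus the tail of $w$ from position $a+i'$ is purely periodic, so $w$ is eventually periodic.

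I expect the main obstacle to be precisely the inductive step that carries the determinism past the end of $u$: the hypothesis only tells me directly that the windows lying inside $u$ have valence $1$, so I must bootstrap the valence-$1$ property of later windows from the periodicity I am simultaneously establishing. Phrasing the induction as the \emph{joint} claim ``$V_m$ is valence $1$ and $V_m = V_{m-p}$'' is what makes this bootstrap go through, since valence is a property of the distinct word and therefore transfers along the equalities $V_m = V_{m-p}$.
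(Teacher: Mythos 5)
Your proof is correct and follows essentially the same route as the paper's: a pigeonhole argument on the $c+1$ length-$n$ windows of $u$ to find a repeated window, then the observation that valence $1$ forces a unique follower letter, propagated forward because each new window equals an earlier valence-$1$ window. The only difference is presentational: where the paper says informally that ``this process continues indefinitely,'' you make the bootstrap rigorous via the joint induction ``$V_m$ has valence $1$ and $V_m = V_{m-p}$,'' which is a faithful formalization of the same idea.
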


\begin{proof}
Let $w$ be an infinite word, $n \ge 1$, and let $c$ be the number of subwords of length $n$ and valence 1 in $w$. Let $u=u_1u_2 \ldots u_{n+c}$ be a subword of length $n+c$ of $w$ such that all subwords of $u$ of length $n$ have valence 1. We want to show that $w$ is ultimately periodic. Notice that there are $(n+c)-n+1=c+1$ not necessarily distinct subwords of length $n$ in $u$. Since all subwords of length $n$ in $u$ are of valence 1 and there are $c$ distinct subwords of length $n$ and valence $1$ in $w$, a subword of length $n$, say $v$,  occurs at least twice in $u$. If we consider the second occurrence of $v$ in $u$, we already know the letters that follow $v$ by looking at the letter that follow the first occurrence of $v$ in $u$. We also know that none of these longer subwords have higher valence, because the suffix of each such word of length $>n$ is a subword of length $n$ in $u$, and therefore also has valence $1$. We can continue adding letters onto $v$ until we reach $v$ again, and this process continues indefinitely. Thus $w$ is periodic.\\
\end{proof}

\begin{theorem}
Let $w$ be an infinite word. The following conditions are equivalent:
\begin{enumerate}
\item $w$ is Sturmian,
\label{it1}
\item $w$ is balanced and aperiodic.\\
\label{it2}
\end{enumerate}
\label{SBA:th}
\end{theorem}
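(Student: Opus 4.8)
The plan is to prove the two implications of Theorem \ref{SBA:th} separately, beginning with the short one, \ref{it2} $\Rightarrow$ \ref{it1}. Assume $w$ is balanced and aperiodic. Balancedness immediately gives $p_w(n) \le n+1$ for all $n > 0$ by Proposition \ref{bal:pr}. On the other hand, since $w$ is not ultimately periodic, part (3) of Theorem \ref{per:th} gives $p_w(n) \ge n+1$. Combining the two bounds yields $p_w(n) = n+1$ for every $n \ge 1$ (and $p_w(0)=1$ trivially), so $w$ is Sturmian. This half requires no new ideas.

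For \ref{it1} $\Rightarrow$ \ref{it2}, assume $w$ is Sturmian, so $p_w(n) = n+1$. Aperiodicity is immediate: an ultimately periodic word has bounded subword complexity by the equivalence stated just after Theorem \ref{per:th}, whereas $p_w(n) = n+1$ is unbounded; hence $w$ is not ultimately periodic. I also record that $w$ is recurrent by Proposition \ref{rec:pr}. The substantive task is to show $w$ is balanced, and here I would argue by contradiction. The engine of the argument is the elementary identity $p_w(n+1) - p_w(n) = \sum_{|v|=n} (|R_v| - 1)$, where the sum runs over the subwords $v$ of length $n$: since $w$ is infinite, every factor extends to the right, so each length-$(n+1)$ subword is the right-extension of a unique length-$n$ subword, and over a binary alphabet each summand is $0$ or $1$. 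Thus $p_w(n+1)-p_w(n)$ simply counts the special (valence-$2$) subwords of length $n$, and for a Sturmian word this difference is $1$, so there is exactly one special subword of each length. It therefore suffices to exhibit two distinct special subwords of a common length.

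To produce them, suppose $w$ is unbalanced. By Proposition \ref{unbal:pr} there is a word $w'$ with $0w'0, 1w'1 \in Sub(w)$; I would choose $w'$ of minimal length $m$. The key claim is that this minimal $w'$ is a \emph{strong} bispecial factor, meaning all four of $0w'0, 0w'1, 1w'0, 1w'1$ lie in $Sub(w)$. Granting this, both $0w'$ and $1w'$ are special subwords of length $m+1$, each being followed by both $0$ and $1$, so $p_w(m+2) - p_w(m+1) \ge 2$, forcing $p_w(m+2) \ge (m+2) + 2 > m+3$ and contradicting $p_w(m+2) = m+3$. This closes the argument and completes the theorem.

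The main obstacle is precisely the upgrade to a strong bispecial factor. Proposition \ref{unbal:pr} only guarantees the two extensions $0w'0$ and $1w'1$, and a direct analysis shows that an \emph{arbitrary} such $w'$ is generally only a neutral bispecial factor (exactly three of the four extensions present), which does not raise the complexity at all — so the existence of the fourth extension must be forced using minimality of $w'$ together with recurrence. The base case $m=0$ is clean: since $00,11 \in Sub(w)$ and $w$ is recurrent, $11$ recurs infinitely often, so $0$ cannot always be followed by $0$, giving $01 \in Sub(w)$, and symmetrically $10 \in Sub(w)$. The difficulty is the descent for $m \ge 1$: one must show that if, say, $1w'0 \notin Sub(w)$, then a strictly shorter word inherits the unbalanced two-sided flanking, contradicting minimality. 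Making this descent rigorous is the technical heart of the proof, and I expect it to require first establishing that $Sub(w)$ is closed under reversal (so that the shortest unbalanced factor may be taken to be a palindrome), after which the symmetry between prefixes and suffixes of $w'$ supplies the missing extension.
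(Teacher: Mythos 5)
Your first half is fine: the implication from balanced and aperiodic to Sturmian via Proposition \ref{bal:pr} and Theorem \ref{per:th}, and the deduction of aperiodicity from $p_w(n)=n+1$, are exactly the paper's argument. The problem is the balancedness half, where your ``key claim'' --- that the minimal word $w'$ with $0w'0,1w'1 \in Sub(w)$ is a \emph{strong} bispecial factor, i.e.\ that all four of $0w'0,0w'1,1w'0,1w'1$ occur --- is precisely what you never prove. You defer it to a proposed lemma that $Sub(w)$ is closed under reversal, which you also do not prove, and this is not a routine technicality: the standard proofs of reversal closure for Sturmian words go \emph{through} balancedness (Sturmian $\Rightarrow$ balanced $\Rightarrow$ mechanical $\Rightarrow$ closed under reversal), so as sketched your plan is circular. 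Worse, the fact that is actually easy to derive from the Sturmian hypothesis points the other way: since $p_w(n+2)-p_w(n+1)=1$, there is exactly one right-special factor of length $|w'|+1$, and its length-$|w'|$ suffix must be $w'$; hence exactly one of $0w'$, $1w'$ is special. Since $1w'1$ is a factor, $1w'$ is followed by $1$, so if $0w'$ is the special one then $1w'0$ simply fails to be a factor --- the minimal $w'$ is forced to be \emph{neutral} (three of four extensions), not strong. Any argument producing the fourth extension is therefore equivalent in strength to the theorem itself, and cannot be waved in.

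The paper's proof accepts this neutrality and extracts the contradiction from aperiodicity instead of from the count of special factors. It first shows (from minimality alone, no reversal closure needed --- your instinct here was right) that the minimal $w'=v$ of length $n$ is a palindrome. Taking $0v$ to be the unique special factor of length $n+1$, so that $1v0 \notin Sub(w)$, it then uses the palindrome property to show that $0v$ cannot occur anywhere in the window $u=w_iw_{i+1}\ldots w_{i+2n+1}$ of length $2n+2$ starting at an occurrence of $1v1$: an occurrence would force a prefix of $0v$ to equal a suffix of $1v1$, giving $v_k=0$ and $v_{n-k+1}=1$ for some $k$, contradicting $v_k=v_{n-k+1}$. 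Thus every factor of length $n+1$ occurring in $u$ has valence $1$, and since there are exactly $n+1$ such valence-$1$ factors in $w$ and $|u|=(n+1)+(n+1)$, Lemma \ref{conserv:lm} applies and forces $w$ to be ultimately periodic, contradicting aperiodicity. So the missing engine in your write-up is the periodicity lemma (Lemma \ref{conserv:lm}) combined with the palindrome property; to salvage your route you would instead have to supply an independent, balance-free proof of reversal closure, which is a substantially harder task than the step you label as merely ``technical.''
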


\begin{proof}
$(\ref{it2}) \Rightarrow (\ref{it1}):$ Let $w$ be balanced and aperiodic. Then by Proposition \ref{bal:pr} and by Theorem \ref{per:th}, we have that for all $n \ge 1$, $p_w(n) \le n+1$ and  $p_w \ge n+1$. Thus $p_w(n)=n+1$ for all $n \ge 1$ and so $w$ is infinite Sturmian.\\
\par $(\ref{it1}) \Rightarrow (\ref{it2}):$ Let $w$ be infinite Sturmian. Then $p_w(n)=n+1$ for all $n \ge 1$, so $p_w(n) \neq p_w(n+1)$ for any $n$. Thus $w$ is aperiodic by Theorem \ref{per:th}.
\par By contradiction suppose that $w$ is not balanced. We will show that $w$ is ultimately periodic. By Proposition \ref{unbal:pr} there must exists a subword $v$ of $w$ such that $0v0$ and $1v1$ are both subwords of $w$. Consider such a word $v=v_1v_2 \ldots v_{n}$, $v_i \in \mathcal{A}_2$, of minimal length $n$. Note that $v \neq \epsilon$ because otherwise $00$ and $11$ would be subwords of length two in $w$, by Proposition \ref{rec:pr} that 0 and 1 occur an infinite number of times in $w$, so 01 and 10 must occur, which would imply that $p_w(2)=4$. But $w$ is Sturmian, so $p_w(2)=3$.

We will prove that $v$ is a palindrome, that is, $v_i=v_{n-i+1}$ for $1 \le i \le n$. Assume $v$ is not a palindrome. Let $j \ge 1$ be the first index such that $v_j \neq v_{n-j+1}$. Without loss of generality let $v_j=0$ and $v_{n-j+1}=1$. Then we have that $0v_1 \ldots v_{j-1}0$ and $1v_{n-j+2} \ldots v_n1$ is an unbalanced pair in $w$ of shorter length, contradicting the minimality of $v$. 

Since $w$ is Sturmian, we know there are $n+1$ distinct subwords of $w$ of length $n$. Note that  $v$ is a special subword in $w$, and therefore is a suffix of a special subword of length $n+1$. There is exactly one special subword of length $n+1$. Suppose that $0v$ is special and thus $1v$ is not, therefore $0v1$ is a subword of $w$ and $1v0$ is not. 

Let $i$ be the index of an occurrence of $1v1$ in $w$. We claim that the subword $0v$ cannot occur in $u=w_i w_{i+1} \ldots w_{i+2n+1}$. The length of $u$ is $2n+2$. The length of $1v1$ is $n+2$ and the length of $1v$ is $n+1$. Suppose that a prefix of $0v$ equals a suffix of $1v1$, then there exists $k$ such that  $0v_1 \ldots v_{n-k+1} = v_k v_{k+1} \ldots v_n1$. But this implies that $v_k=0$ and $v_{n-k+1}=1$, a contradiction to $v$ being a palindrome. It follows that $0v$ is not a subword of $u=w_i w_{i+1} \ldots w_{i+2n+1}$.

There are exactly $n+2$ not necessarily distinct subwords of length $n+1$ in $u$. Since $w$ is Sturmian, there are $n+2$ distinct subwords of length $n+1$ in $w$, one of them is $0v$.  One of the subwords of length $n+1$ of $u$ must occurs at least twice, because $u$ is a subword of $w$ and $0v$ does not occur in $u$. Since $0v$ is the only special subword of length $n+1$ in $w$, all the subwords of length $n+1$ of $u=w_i w_{i+1} \ldots w_{i+2n+1}$ are not special, that is, have valence 1. Thus, by Lemma \ref{conserv:lm}, $w$ is ultimately periodic, a contradiction. It follows that $w$ is balanced.

\end{proof}

\begin{definition}
A function $\varphi : \mathcal{A}^* \rightarrow \mathcal{B}^*$ is called a morphism (or substitution) if $\varphi(xy) = \varphi(x)\varphi(y)$ for every $x,y \in \mathcal{A}^*$. We say a morphism $\varphi$ is nonerasing if the image of every letter is a nonempty word.
\end{definition}

\begin{definition}
We say that a word $x$ is a fixed point of a morphism $\varphi$ if $x=\varphi(x)$.
\end{definition}
 
 \begin{proposition}
 Let $\varphi$ be a nonerasing morphism from $\mathcal{A}^*$ to itself, and let $a$ be a letter such that $\varphi(a)=ab$ for some nonempty word $b$. For $n \ge 0$, set \[ u_n=\varphi^n(a), \,\,\,\,\, v_n=\varphi^n(b).\] Then
 \begin{enumerate}
 \item $u_{n+1}=u_nv_n$, so $u_n$ is a prefix of $u_{n+1}$ for all $n \ge 0$,
 \item $u_{n+1}=av_0v_1v_2 \ldots v_n$,
 \item the infinite word $$w=ab\varphi(b)\varphi^2(b)\dots \varphi^n(b) \dots$$ is the direct limit of the sequence of words $u_n$ as $n \to \infty$. We write $w = \lim_{n \to \infty}u_n$. The word $w$ is the unique fixed point of $\varphi$ starting with the letter $a$. We call $w$ a morphic word.
 \end{enumerate}
 \label{pref}
 \end{proposition}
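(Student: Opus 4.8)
The plan is to handle the three parts in order, since each builds on the previous one. For part (1) I would use that $\varphi^n$ is itself a morphism, being a composition of morphisms, together with the defining relation $\varphi(a)=ab$: then
$$u_{n+1} = \varphi^{n+1}(a) = \varphi^n(\varphi(a)) = \varphi^n(ab) = \varphi^n(a)\varphi^n(b) = u_n v_n.$$
Since $u_{n+1}=u_n v_n$, the word $u_n$ is by definition a prefix of $u_{n+1}$. Part (2) then follows by telescoping this identity: starting from $u_0=\varphi^0(a)=a$ and repeatedly substituting $u_{k+1}=u_k v_k$ gives $u_{n+1}=u_0 v_0 v_1 \cdots v_n = a v_0 v_1 \cdots v_n$. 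Formally this is a one-line induction on $n$ with base case $u_1 = ab = a v_0$ (using $v_0=\varphi^0(b)=b$) and inductive step supplied directly by part (1).

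For part (3) I would first observe that $w = ab\varphi(b)\varphi^2(b)\cdots = a v_0 v_1 v_2 \cdots$, since $v_k=\varphi^k(b)$. The key preliminary point is that the lengths $|u_n|$ grow without bound: because $\varphi$ is \emph{nonerasing} and $b\neq\epsilon$, each $v_n=\varphi^n(b)$ is nonempty, so part (1) gives $|u_{n+1}| = |u_n| + |v_n| \ge |u_n| + 1$ and hence $|u_n| \ge n$. Combined with part (1), which exhibits $u_0,u_1,u_2,\ldots$ as a nested chain of prefixes, this guarantees that the direct limit $w=\lim_{n\to\infty}u_n$ is a well-defined right-infinite word having every $u_{n+1}=a v_0 \cdots v_n$ as a prefix, matching the displayed expression. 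To see $w$ is a fixed point, I would use $\varphi(u_n)=\varphi(\varphi^n(a))=\varphi^{n+1}(a)=u_{n+1}$: since $u_n$ is a prefix of $w$, applying $\varphi$ shows $u_{n+1}=\varphi(u_n)$ is a prefix of $\varphi(w)$; as the words $u_{n+1}$ are prefixes of both $w$ and $\varphi(w)$ and have unbounded length, we conclude $\varphi(w)=w$. Finally $w$ begins with $a$ because $u_1=ab$ does.

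For uniqueness, suppose $w'$ is any fixed point of $\varphi$ beginning with $a$. I would show by induction that $u_n$ is a prefix of $w'$ for every $n$: the base case $u_0=a$ is the hypothesis, and if $u_n$ is a prefix of $w'$, then $u_{n+1}=\varphi(u_n)$ is a prefix of $\varphi(w')=w'$. Since $|u_n|\to\infty$ and each $u_n$ is a common prefix of $w$ and $w'$, it follows that $w'=w$. The main obstacle, and the only genuinely delicate point, is the passage to the limit in part (3): one must justify both that the nested prefixes converge to an infinite word—which is exactly where the nonerasing hypothesis on $\varphi$ and the nonemptiness of $b$ enter, forcing $|u_n|\to\infty$—and that $\varphi$ commutes with this limit, i.e. that applying the morphism to the infinite word $w$ is compatible with applying it to its prefixes $u_n$. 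Everything else reduces to the single morphism identity established in part (1).
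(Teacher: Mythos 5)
Your proof is correct and takes essentially the same route as the paper: the identical morphism identity $\varphi^{n+1}(a)=\varphi^n(ab)=u_nv_n$ for (1), the same telescoping induction for (2), and the same prefix-induction (applying $\varphi$ to a prefix of a fixed point) for uniqueness in (3). The only difference is cosmetic and arguably an improvement in rigor: you verify $\varphi(w)=w$ by comparing unboundedly long common prefixes, making explicit where the nonerasing hypothesis forces $|u_n|\to\infty$, whereas the paper simply computes $\varphi(w)=\varphi(a)\varphi(b)\varphi^2(b)\cdots=w$ termwise and calls the limit ``clear.''
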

 
 \begin{proof}
 \begin{enumerate}
 \item We have $$u_{n+1}=\varphi^{n+1}(a)=\varphi^n(\varphi(a))=\varphi^n(ab)=\varphi^n(a)\varphi^n(b)=u_nv_n.$$ Thus $u_n$ is a prefix of $u_{n+1}$ for all $n \ge 0$.
 \label{part1}
 \item By part (\ref{part1}) we have $u_1= av_0$. Proceeding by induction, assume $u_{n}=av_0v_1v_2 \ldots v_{n-1}$ for some $n$. Then \[
 u_{n+1} = u_n v_n= av_0 v_1\dots v_{n-1}v_n.
 \]
 \item It is clear that $w=ab\,\varphi(b)\varphi^2(b) \dots \varphi^n(b) \dots$ is the direct limit of $u_n$ as $n \to \infty$. Since $$\varphi(w)=\varphi(a) \varphi(b) \varphi^2(b) \dots = ab\varphi(b)\varphi^2(b)\dots = w,$$  $w$ is a fixed point of $\varphi$.
 
 Now assume that $x$ is another fixed point of $\varphi$ starting with $a$. We will show by induction that $\forall n$, $ab\varphi(b)\varphi^2(b)\varphi^3(b) \dots \varphi^n(b)$ is a prefix of $x$, and, therefore $x=w$.
 
 The claim holds for $n=1$. Note that $x$ starts with letter $a$ and $\varphi(a) = ab$. Since $x$ is a fixed point, $\varphi(ab)=\varphi(a)\varphi(b)=ab\varphi(b)$ is a prefix of $x$.
 
 Now suppose that $ab\varphi(b)\varphi^2(b)\varphi^3(b) \dots \varphi^n(b)$ is a prefix of $x$. We want to show that $ab\varphi(b)\varphi^2(b)\varphi^3(b) \dots \varphi^{n+1}(b)$ is a prefix of $x$. Since  $\varphi(x)=x$ we get that
 
 \begin{align*}
 \varphi(ab\varphi(b)\varphi^2(b) \dots \varphi^n(b)) & = \varphi(a)\varphi(b)\varphi^2(b)\varphi^3(b) \dots \varphi^{n+1}(b)\\
  &= ab\,\varphi(b)\varphi^2(b)\varphi^3(b) \dots \varphi^{n+1}(b)
  \end{align*}
 is a prefix of $x$.
 
 \end{enumerate}
 \end{proof}

\begin{example}
Let $\mathcal{A}=\{0,1\}$. Consider the nonerasing morphism $\varphi$ defined by
\begin{align*}
&\varphi(0)=01,\\
&\varphi(1)= 0.
\end{align*}

Consider the word $f_0=0$ and define $f_n = \varphi(f_{n-1})$. Note that $\varphi(f_0) =01$. Using the notation of Proposition \ref{pref} we have that 
\[ u_n = \varphi^n(0) = f_n, \,\,\,\,\, v_n = \varphi^n(1)=f_{n-1}\] for $n \ge 1$. By Proposition \ref{pref}, $f_n$ is a prefix of $f_{n+1}$ for all $n \ge 0$. We also have that $\lim_{n \to \infty} f_n$ exists, and then the infinite morphic word $f$ defined by
\[
f = \lim_{n \to \infty} f_n = 01001010010010100101001001\ldots
\]
is the unique fixed point of $\varphi$ starting with the letter 0. We also have that
\begin{equation}
f_{n+1}=f_n f_{n-1}.
\label{fibrec}
\end{equation}
The word $f= \lim_{n \to \infty} f_n = 01001010010010100101001001\ldots$ is called the \textit{Fibonacci word}. 

Note that the sequence of the lengths of the words $f_n$ is the traditional Fibonacci sequence. Equation (\ref{fibrec}) gives a recursive definition of the sequence $f_n$.\\
\label{fib:ex}
\end{example}

\begin{proposition}
The Fibonacci word $f$ is infinite Sturmian. 
\label{fibinSt}
\end{proposition}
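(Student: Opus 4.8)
The plan is to invoke the characterization of Sturmian words proved in Theorem~\ref{SBA:th}: an infinite word is Sturmian if and only if it is balanced and aperiodic. Thus it suffices to establish two facts about the Fibonacci word $f = \lim_{n\to\infty} f_n$, namely that $f$ is aperiodic and that $f$ is balanced. Both will be read off from the self-similar structure of $f$, in particular from the recurrence~(\ref{fibrec}), $f_{n+1} = f_n f_{n-1}$, and from the explicit form of the generating morphism $\varphi(0)=01$, $\varphi(1)=0$.

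For aperiodicity, I would track the height and length of the building blocks $f_n$. From $f_{n+1}=f_nf_{n-1}$ we get $|f_{n+1}| = |f_n| + |f_{n-1}|$ and $h(f_{n+1}) = h(f_n) + h(f_{n-1})$, so both $|f_n|$ and $h(f_n)$ are Fibonacci numbers; writing $F_0=0,F_1=1,\dots$, one checks $h(f_n)=F_n$ and $|f_n|=F_{n+2}$. Hence the frequency of the letter $1$ in $f$ equals $\lim_{n\to\infty} h(f_n)/|f_n| = \lim_{n\to\infty} F_n/F_{n+2} = (3-\sqrt5)/2$, where the intermediate prefixes are squeezed because the ratios $|f_{n+1}|/|f_n|$ are bounded. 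This limit is irrational, whereas any ultimately periodic word $uv^\infty$ has rational letter-frequency $h(v)/|v|$; so $f$ cannot be ultimately periodic. By Theorem~\ref{per:th} this already forces $p_f(n)\ge n+1$ for all $n$.

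The harder half is balance, and this is where I expect the real work to be. The strategy I would use is an infinite descent driven by Proposition~\ref{unbal:pr}. Assume $f$ is not balanced; then there is a subword $v$ with both $0v0$ and $1v1$ in $Sub(f)$, and these form an unbalanced pair of equal length $|v|+2$ with $\delta(0v0,1v1)=2$. Choose such a $v$ of minimal length. The key observation is that, because $f=\varphi(f)$ and the only occurrences of the letter $1$ arise as the second symbol of a block $\varphi(0)=01$, the word $f$ admits a \emph{unique} factorization into the blocks $01$ and $0$: a $1$ always closes an $01$-block, a $0$ followed by $1$ opens one, and a $0$ followed by $0$ is a lone $0$-block. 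This lets me ``desubstitute'' the occurrences of $0v0$ and $1v1$: each pulls back under $\varphi$ to a subword of $f$, and since $\varphi$ strictly increases length (it sends $0\mapsto 01$), the preimages are strictly shorter than $v$. Tracking how $\varphi$ acts on heights --- $h(\varphi(x)) = |x| - h(x)$ --- I would show the two preimages again form an unbalanced pair of the shape $0v'0$, $1v'1$, contradicting the minimality of $v$.

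The main obstacle is making this desubstitution rigorous at the two boundaries: an occurrence of $0v0$ or $1v1$ need not begin or end exactly at a block boundary, so one must analyze the possible partial blocks at each end and verify that, after pulling back, the extremal letters are again a $0$ and a $1$ producing discrepancy $2$. Here I expect to reuse the palindrome normalization from the proof of Theorem~\ref{SBA:th} to control the first and last letters of $v$ before desubstituting. Once balance is established, Proposition~\ref{bal:pr} gives $p_f(n)\le n+1$; combined with the lower bound from aperiodicity this yields $p_f(n)=n+1$ for all $n\ge 1$, i.e.\ $f$ is Sturmian. Alternatively, balance can be obtained by proving the lemma that $\varphi$ maps balanced words to balanced words and inducting on $f_n$ via~(\ref{fibrec}); this merely relocates the same boundary bookkeeping into the inductive step.
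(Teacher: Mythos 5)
Your proposal is correct in substance, but it reaches the conclusion by a genuinely different route than the paper for the crucial half of the argument. The balance half is essentially identical: the paper also fixes a minimal $u$ with $0u0, 1u1 \in Sub(f)$, notes that $u$ must begin and end in $0$ (since $11$ is not a subword of $f$), and desubstitutes through $f = \varphi(f)$ to produce a strictly shorter unbalanced pair --- the same descent you describe, with the same lightly-treated block-alignment bookkeeping. Where you diverge is in how the lower bound $p_f(n) \ge n+1$ is obtained. You get it from aperiodicity (irrational letter frequency: $h(f_n)/|f_n| = F_n/F_{n+2} \to (3-\sqrt{5})/2$) combined with Theorem \ref{SBA:th}; this is legitimate, since that theorem is proved before and independently of Proposition \ref{fibinSt}, so there is no circularity. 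The paper, by contrast, never proves aperiodicity and never invokes Theorem \ref{SBA:th}: it argues directly from the special-subword definition, using balance to show there is \emph{at most} one special subword of each length, and then proving the explicit identity $f_{n+2} = g_n\, f_n^{-1} f_n^{-1} t_n$ (by induction, via the reversal property $\varphi(w^{-1})0 = 0\varphi(w)^{-1}$) to exhibit $f_n^{-1}$ as a special subword, giving \emph{at least} one of each length. Your route is shorter and bypasses that technical machinery entirely; the paper's route is constructive, identifying the special subwords explicitly (the reversed prefixes $f_n^{-1}$ and their suffixes), which yields more structural information about $f$. One small repair to your aperiodicity step: the squeeze you invoke does not by itself prove that the frequency of $1$ in $f$ exists (the natural bounds $h(f_n)/|f_{n+1}|$ and $h(f_{n+1})/|f_n|$ converge to different limits). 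But you do not need the full limit: if $f = uv^\infty$ were ultimately periodic, the frequency over all prefixes would converge to the rational number $h(v)/|v|$, hence so would the subsequence $h(f_n)/|f_n|$, contradicting the irrationality of its limit $(3-\sqrt{5})/2$.
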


\begin{proof} To prove that $f$ is infinite Sturmian, we need to show there there is exactly one special subword of each length. Note that $f$ is a concatenation of $01$s and $0$s. Thus $11$ is not a subword of $f$, and so $p_f(2)=3$. 

We first show that $f$ is balanced. By Proposition \ref{bal:pr} we need to show that, for any word $u$, both $0u0$ and $1u1$ cannot be subwords of $f$. This claim will be proved using induction on the length of $u$. If $u$ is the empty word, then we have just established that $1u1=11$ is not a subword of $f$. Assume for contradiction that there exists a subword $u$ of minimal length such that both $0u0$ and $1u1$ are subwords of $f$. Note that $u$ must start and end with $0$, otherwise there exists a subword $v$ of $f$ such that $11v11$ is a subword of $f$, which would imply that $11$ is a subword of $f$. So $u=0v0$ for some subword $v$ of $f$. Then $00v00$ and $10v01$ are subwords of $\varphi(f)$. Since $f = \varphi(f)$, there exists a subword $z$ of $f$ such that $\varphi(z)=0v$. Then, by the definition of $\varphi$, $00v0=\varphi(1z1)$ and $010v01 = \varphi(0z0)$, this implies that $0z0$ and $1z1$ are both subwords of $f$. But $\left|z\right| < \left|u\right|$, which contradicts the minimality of $u$. It follows that $f$ is balanced.

To show that $f$ has at most one special subword of each length, assume for contradiction that both $u$ and $v$ are special subwords of the same length, $u \neq v$, and let $z$ be their longest common suffix. Since $u, v$ are special, $u0, u1, v0, v1$ are all subwords of $f$. Since $z$ is the longest common suffix of $u$ and $v$, we have that $u$ and $v$ differ in the letter preceding $z$. But then $0z0$, $0z1$, $1z0$ and $1z1$ are subwords of $f$. This contradicts the fact that $f$ is balanced.

We proved that $f$ has at most one special subword of each length. We will now prove that $f$ has at least one special subword of each length. 
Recall that for a word $w=w_1 w_2 \ldots w_n$, the reverse word is $w^{-1}=w_n w_{n-1} \ldots w_1$. Define the following words

\[
g_2 = \epsilon \, \, \, \, \mbox{and} \, \, \, \, g_n = f_{n-3} \cdots f_1 f_0 \, \, \, \, \mbox{for}\, \, \, \,  n\ge 3
\]
and
\[
t_n=
 \left\{
	\begin{array}{ll}
		01,  & \mbox{if} \text{ $n$ is odd,} \\
		10,  & \mbox{if} \text{ $n$ is even.}
	\end{array}
\right. \\
\]

We claim that
\begin{equation}
f_{n+2} = g_n \,f^{-1}_n \,f^{-1}_n\, t_n, \, \, \, \, \, \, \,  n \ge 2.
\label{fibclaim}
\end{equation}

This claim will be proved using induction. The relation holds for
\begin{align*}
f_4 & = g_2 \,f_2^{-1}\, f^{-1}_2\, t_2 = \epsilon (010)(010)10=01001010,\\ 
f_5 & = g_3 \,f_3^{-1} \,f_3^{-1} \,t_3 = 0(10010)(10010)01= 0100101001001.
\end{align*}

To prove claim (\ref{fibclaim}), we will use the following three properties of $\varphi$.
\begin{enumerate}
\item[(a)] $\varphi(w^{-1})0 = 0 \varphi(w)^{-1}$ for any word $w$. We prove this by using induction on the length of $w$. The claim holds when $w = \epsilon$ and $w= 0, 1$. Suppose  $\varphi(w^{-1})0 = 0 \varphi(w)^{-1}$ for all words $w$ such that $\left|w\right| = n$. Consider $w'$ such that $\left|w'\right| = n+1$. Consider two cases
\begin{enumerate}
\item[(a1)] If $w' = w0$, then  
\begin{align*}
\varphi(w'^{-1})0 & =  \varphi((w0)^{-1})0  = \varphi(0w^{-1})0\\ & = \varphi(0)\varphi(w^{-1})0  = 01 0\varphi(w)^{-1}\\  & = 0 \varphi(0)^{-1} \varphi(w)^{-1} = 0(\varphi(w)\varphi(0))^{-1}\\ & = 0 \varphi(w0)^{-1}. 
\end{align*}
\item[(a2)] The proof is similar for $w'=w1$.\\
\end{enumerate}
\label{1}

\item[(b)] $\varphi(f_n^{-1} t_n) = 0 f_{n+1}^{-1} t_{n+1}$ . Consider two cases
\begin{enumerate}
\item[(b1)] Let $n$ be even. Then $t_n=10$ and
\[
\varphi(f_n^{-1}t_n) = \varphi(f_n^{-1}10) = \varphi(f_n^{-1})001.
\]
It follows from property (a) that
\begin{align*}
 \varphi(f_n^{-1})001= 0\varphi(f_n)^{-1} 01 = 0 f_{n+1}^{-1} t_{n+1}.
\end{align*}
\item[(b2)] The proof is similar for $n$ odd.\\
\end{enumerate}

\item[(c)] $\varphi(g_n)0 = g_{n+1}$. By the definition of $g_n$ 
\begin{align*}
 \varphi(g_n)0 & = \varphi(f_{n-3} \dots f_0)0 = \varphi(f_{n-3}) \dots \varphi(f_0) 0\\ &= f_{n-2} \dots f_1 0 =  f_{n-2} \dots f_1 f_0\\ &= g_{n+1}.\\
 \end{align*} 

\end{enumerate}

Combining properties (1), (2), and (3) we get that 
\begin{align*}
f_{n+3} &= \varphi(f_{n+2}) = \varphi(g_n)\varphi(f_n^{-1})\varphi(f_n^{-1}t_n)\\ 
              &= \varphi(g_n)\varphi(f_n^{-1})0f_{n+1}^{-1}t_{n+1}\\ & =\varphi(g_n)0\varphi(f_n)^{-1}f_{n+1}^{-1}t_{n+1}\\ 
              &= g_{n+1}f_{n+1}^{-1}f_{n+1}^{-1}t_{n+1}.
\end{align*}
This proves claim (\ref{fibclaim}).

Consider $f_{n+2} = g_n \,f^{-1}_n \,f^{-1}_n\, t_n$. Observe that the first letter of $f_n^{-1}$ is the opposite of the first letter of $t_n$. This is because the last letter of $f_n$, which is the first letter of $f_n^{-1}$, is 0 when $n$ is even and 1 when $n$ is odd, which is proved by induction using the recursive definition of the Fibonacci word. Thus $f_n$ is a special subword of $f_{n+2}$ for $n \ge 2$, and, therefore, a special subword of $f$. Since the suffix of a special subword is also a special subword, this proves that special subwords of any length exist.

We've shown that there exists at most one special subword of each length and at least one special subword of each length, and thus there exists exactly one special subword of $f$ of every length. Therefore $f$ is an infinite Sturmian word.

\end{proof}

\textbf{Geometric Interpretation.} There is a well known geometric interpretation of Sturmian words that involves relating Sturmian words to lines. Let $\theta > 0$ be an irrational real number, and consider the line $L_\theta$ given by $y = \theta x$. Following this line to the right, we can define a word $s$ by

\[
s_i = 
 \left\{
	\begin{array}{ll}
		0  & \mbox{if } \text{$L$ crosses a vertical grid-line} \\
		1  & \mbox{if } \text{$L$ crosses a horizontal grid-line}
	\end{array}
\right.
 \]

\begin{figure}[ht]
\begin{center}
\psfragscanon
\psfrag{0}{$1$}
\psfrag{1}{$0$}
\psfrag{0 1}{$1\, 0$}
\psfrag{t}{$\theta$}
\psfrag{y=tx}{$y=\theta x$}
\includegraphics[scale=.5]{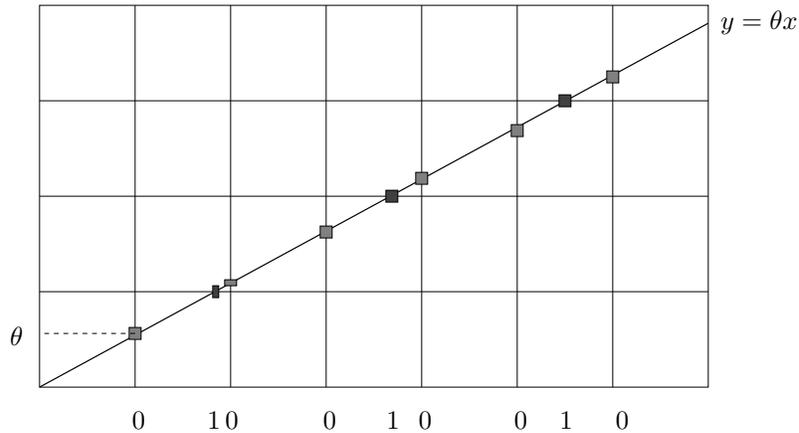}
\caption{The cutting sequence given by line $y=\theta x$.}
\label{cutseq:ex}
\end{center}
\end{figure}

so the resulting infinite word would be
\[ s_\theta = s_1s_2s_3 \ldots \]
This $s_\theta$ is sometimes called a \textit{cutting sequence}, and is a Sturmian word. Figure \ref{cutseq:ex} shows a graph of such a line $L_\theta$. 

If $\theta$ is rational, this construction corresponds to a periodic word. Note that not only do all such lines $y=\theta x$, for $\theta$ irrational, correspond to Sturmian words, but given a Sturmian word $w$, we can find a cutting-sequence representation of $w$. This interpretation allows for nice results and is often used as a tool to prove properties of Sturmian words. 

Using this cutting sequence interpretation, we get another equivalent definition of a Sturmian word. \textit{Mechanical words}, or \textit{rotation words}, are the infinite words defined for $0 < \alpha < 1$ and $0 \le \rho \le 1$ by
\[
s_{\alpha,\rho}(n) = 
 \left\{
	\begin{array}{ll}
		0,  & \mbox{if $\lfloor (n+1)\alpha + \rho \rfloor= \lfloor n\alpha + \rho \rfloor$,}\\
		1,  & \mbox{otherwise,}\\
	\end{array}
\right.
\]
and
\[
s'_{\alpha,\rho}(n) = 
 \left\{
	\begin{array}{ll}
		0,  & \mbox{if $\lceil (n+1)\alpha + \rho \rceil= \lceil n\alpha + \rho \rceil$,}\\
		1,  & \mbox{otherwise.}
	\end{array}
\right.
 \]
for $n \ge 0$. The word $s_{\alpha,\rho}$ ( $s'_{\alpha,\rho}$) is called the lower (upper) mechanical word with slope $\alpha$ and intercept $\rho$. When $\alpha$ is irrational, mechanical words are Sturmian. Also, any Sturmian word is a mechanical word.
 

Due to these geometric interpretations, Sturmian words are now receiving some attention in computer graphics and image processing. For example, counting the number of essentially different digitized straight lines corresponds to counting the number of subwords of length $n$ in all Sturmian words, that is, the number of all finite Sturmian words of length $n$ (see Section 6). De Luca and Mignosi give a proof of this is \cite{LM:1994}. For more on mechanical words, see Berstel \cite{JB:2007}, or Lothaire \cite{L:2002}.

\bigskip
\section{Finite Sturmian Words}
In this section we will consider finite words of low subword complexity, in particular we will talk about finite Sturmian words.

\begin{definition}
A finite word $s$ is called Sturmian if $s$ is a subword of an infinite Sturmian word.
\label{finstu:def}
\end{definition}

\begin{proposition}
Let $w$ be an infinite Sturmian word. The word that results from removing a finite prefix from $w$ is infinite Sturmian.
\label{prefSt}
\end{proposition}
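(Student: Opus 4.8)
The plan is to show directly that the deleted word has the same subword complexity as $w$, namely $p(n)=n+1$ for every $n$, and then invoke Definition \ref{51:def}. Write $w = pv$, where $p$ is the finite prefix to be removed, say $\left|p\right| = N$, and $v = w_{N+1}w_{N+2}\ldots$ is the resulting right-infinite word. Since deleting a finite prefix leaves infinitely many letters, $v$ is again right-infinite, so it suffices to prove that $p_v(n) = n+1$ for all $n \ge 0$. I would obtain this by establishing the two inequalities $p_v(n) \le n+1$ and $p_v(n) \ge n+1$ separately; in fact the argument will show the stronger statement $Sub(v) = Sub(w)$.

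For the upper bound, I would note that $v$ is a suffix of $w$, so every subword of $v$ is also a subword of $w$; that is, $Sub_v(n) \subseteq Sub_w(n)$ for every $n$. Hence $p_v(n) \le p_w(n) = n+1$, using that $w$ is Sturmian. This direction is routine and requires no special structure of Sturmian words beyond the complexity value.

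The substantive direction is the lower bound, and here the key tool is recurrence. By Proposition \ref{rec:pr}, an infinite Sturmian word is recurrent, so every subword $u$ of $w$ occurs in $w$ infinitely many times. In particular, $u$ has at least one occurrence beginning at some position strictly greater than $N$, and this occurrence lies entirely within $v$. Therefore every subword of $w$ is also a subword of $v$, giving $Sub_w(n) \subseteq Sub_v(n)$ and hence $p_v(n) \ge p_w(n) = n+1$. Combining the two inequalities yields $p_v(n) = n+1$ for all $n$, so $v$ is infinite Sturmian by Definition \ref{51:def}.

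The only place where real content enters is the lower bound, so I expect the main (though mild) obstacle to be justifying that no subword is lost when the prefix is discarded; everything hinges on citing recurrence (Proposition \ref{rec:pr}) correctly, after which the two containments collapse to $Sub(v) = Sub(w)$ and the complexity values match automatically. No delicate case analysis or computation is needed beyond this observation.
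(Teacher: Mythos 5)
Your proposal is correct and follows essentially the same route as the paper: both arguments reduce the claim to showing $p_{w'}(n)=p_w(n)=n+1$ and both hinge on recurrence (Proposition \ref{rec:pr}) to show that no subword is lost when the prefix is removed. Your write-up merely makes explicit the trivial containment $Sub(v) \subseteq Sub(w)$ that the paper leaves implicit.
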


\begin{proof}
Let $w$ be an infinite Sturmian word, and let $p$ be a finite prefix of $w$. Let $w'$ be the infinite word obtained by removing prefix $p$ from $w$. We want to show that $w'$ is infinite Sturmian. We need to show that $p_{w'}(n)=n+1$ for all $n$. Recall from Proposition \ref{rec:pr} that any subword $u$ of $w$ occurs infinitely many times in $w$, and thus every subword of $w$ occurs in $w'$. So $p_{w'}(n)=p_{w}(n)=n+1$ for all $n$.\\ 
\end{proof}

The following equivalent definition of finite Sturmian words follows from Proposition \ref{prefSt}.

\begin{definition}
A finite word $s$ is Sturmian if it is the prefix of an infinite Sturmian word.
\label{prefSt}
\end{definition}

The following proposition is from de Luca \cite{AL:1999} and modified for this paper.
\begin{proposition}
Let $w$ be a finite Sturmian word of length $N$. Then
\[ N = R_w + K_w \]
where $R_w$ and $K_w$ are defined in Section 2.
\end{proposition}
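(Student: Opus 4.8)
The plan is to show that, for a finite Sturmian word containing both letters, the subword complexity sequence rises in unit steps to a single plateau and then falls in unit steps, and to identify $R_w$ and $K_w$ as the two ends of that plateau, whose common height I then compute in two different ways. Throughout I assume $p_w(1)=2$, i.e. that $w$ is not a power of a single letter; in the degenerate case $w=a^N$ one has $R_w=1$, $K_w=N$, so $R_w+K_w=N+1$ as in Proposition \ref{R+K}, and the stated identity is understood to exclude this case.

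First I would isolate the single place where the Sturmian hypothesis is used. Since $w$ is a subword of an infinite Sturmian word $W$, any special factor of $w$ is also special in $W$; because $W$ is binary and, by definition (the equivalent characterisation accompanying Definition \ref{51:def}), has exactly one special subword of each length, it follows that $w$ has \emph{at most} one special subword of each length. In particular $s_w(n,i)=0$ for $i\ge 3$ and $s_w(n)\le 1$ for every $n$.

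Next I set $m=\min\{R_w,K_w\}$ and $M=\max\{R_w,K_w\}$. For $1\le n<m\le R_w$ the definition of $R_w$ forces at least one special factor of length $n$, so in fact $s_w(n)=1$; since also $n<K_w$, equation (\ref{eq1}) gives $p_w(n+1)-p_w(n)=s_w(n)=1$, and starting from $p_w(1)=2$ this yields $p_w(m)=m+1$. I would then show $p_w$ is constant on the whole plateau $[m,M]$: if $R_w<K_w$ this is immediate since $m=R_w$, while if $K_w<R_w$, for each $n$ with $K_w\le n<R_w$ there is still a special factor, so $s_w(n)=1$ and equation (\ref{eq2}) gives $p_w(n+1)-p_w(n)=s_w(n)-1=0$. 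In either case $p_w(m)=p_w(R_w)$, which by Proposition \ref{Nmax:prop} is the maximal value and equals $N-M+1$. Equating the two computations of the plateau height,
\[
m+1=p_w(m)=p_w(R_w)=N-M+1,
\]
gives $m=N-M$, and hence $R_w+K_w=m+M=N$.

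The main obstacle is the constancy of the plateau in the case $K_w<R_w$: a priori Theorem \ref{main} guarantees only that $p_w$ is nondecreasing on $[K_w,R_w]$, and Proposition \ref{R+K} correspondingly yields only $N\ge R_w+K_w$. Upgrading this weak inequality to an equality is exactly where the Sturmian property does its work, through the bound $s_w(n)\le 1$ from the first step, which forces the inequality in (\ref{eq2}) to be an equality. Once this is in hand, the remaining steps are routine manipulations of the difference equations (\ref{eq1}) and (\ref{eq2}) together with the evaluation of the maximum in Proposition \ref{Nmax:prop}.
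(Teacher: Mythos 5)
Your proof is correct and follows essentially the same route as the paper's: the rise--plateau--fall structure from Theorem \ref{main}, the Sturmian hypothesis used in the form ``at most one special factor of each length'' to force equality in equation (\ref{eq2}) on the plateau when $K_w<R_w$, and the conclusion obtained by computing the peak height two ways, $m+1=N-M+1$. Your explicit exclusion of $w=a^N$ is a genuine refinement rather than pedantry: such words \emph{are} finite Sturmian under Definition \ref{finstu:def} (e.g.\ $0^N$ is a factor of the infinite Sturmian word $\psi(f)$ constructed in Section 6), they satisfy $R_w+K_w=N+1$ by Proposition \ref{R+K}, and the paper's own proof silently assumes $p_w(1)=2$ when it asserts $p_w(n)\ge n+1$ on $[1,m]$, so the proposition as literally stated fails for exactly these words.
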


\begin{proof}
Let $m = \min\{R_w,K_w\}$ and $M= \max\{R_w,K_w\}$. For $n \in [1,m]$ we know by Theorem \ref{main} that subword complexity sequences are strictly increasing, so $p_w(n) \ge n+1$. But $w$ is a subword of an infinite Sturmian word, so we have that $p_w(n) \le n+1$ and thus $p_w(n)=n+1$. Now to show that $p_w(n+1) = p_w(n)$ for $n \in [m,M]$ we consider two cases:
\begin{enumerate}
\item $R_w < K_w$\\
It follows from Theorem \ref{main} that $p_w$ is constant on $[R_w,K_w]$ and $p_w(R_w)=p_w(K_w) = R_w + 1$.
\item $K_w \le R_w$\\
By Theorem \ref{main}, $p_w$ is nondecreasing on $[K_w, R_w]$. When $n < R_w$, $w$ contains at least special subword of length $n$. Since $w$ is Sturmian, there is at most one special subword of length $n$ in $w$. Therefore $w$ contains exactly one special subword of length $n$ in $w$. By Equation (\ref{eq2}) in Theorem \ref{main}, $$p_w(n+1)=p_w(n) + s_w(n,2)-1 = p_w(n)+1-1=p_w(n).$$ Thus $p_w$ is constant on $[K_w,R_w]$ and $p_w(K_w)=p_s(R_w)=K_w+1$.
\end{enumerate}
Now by Theorem \ref{main}, $p_w$ is strictly decreasing on the interval $[M,N]$, and for $n \in [M,N]$, $p_w(n+1) = p_w(n)-1$. It follows that
\begin{align*}
&&p_w(m) - (N-M) & = 1 \\
& \Rightarrow &m+1 - (N-M) & =1 \\
& \Rightarrow &m+ M & =N \\
& \Rightarrow &R_w + K_w &= N\\
\end{align*}

\end{proof}

It is important to note that the condition $N = R_w + K_w$ does not characterize finite Sturmian words. Consider the following example:

\begin{example}
Let $w=0011$. Then $\left|w\right| = 4$ and $R_w = 2$ and $K_w = 2$, but $w$ is not a finite Sturmian word since $w$ is not balanced.
\end{example}

\begin{figure}[ht]
\begin{center}
\psfragscanon
\psfrag{1}{$1$}
\psfrag{N}{$N$}
\psfrag{p}{$p_w(n)$}
\psfrag{n}{$n$}
\psfrag{k}{$\frac{N}{2}$}
\includegraphics[scale=.4]{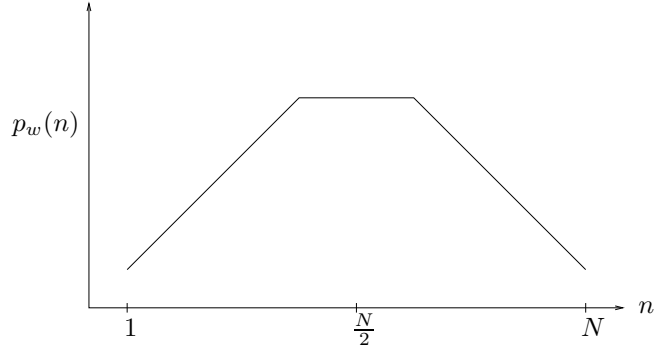}
\caption{The subword complexity function of a finite Sturmian word.} 
\end{center}
\end{figure}

\begin{definition}
A very low complexity word is a finite word $w$ for which there exist positive integers $a,b$, $a <b$, such that $p_w(n)=n+1$ for $1 \le n \le a$, $p_w(n+1)=p_w(n)$ for $a \le n \le b$, and $p_w(n)=N-n+1$ for $b \le n \le N$. 
\end{definition}

Note that if $w$ is a very low complexity word, then $a=\min \{R_w,K_W\}$ and $b= \max \{R_w,K_w\}$. All finite Sturmian words are very low complexity words, however not all very low complexity words are finite Sturmian words. For example $w=0011$ is a very low complexity word, but it is not a finite Sturmian word because it is unbalanced.

It's particularly interesting to find a low complexity word whose subword complexity function does not plateau, or has a plateau of length 1. Such a word is considered in Proposition \ref{peak}. The subword complexity sequence of the word constructed in Proposition \ref{peak} obtains its maximum value at the latest possible length. It follows that only the first $\frac{N}{2}$ values of a subword complexity sequence are significant, and the rest can be extrapolated.

\begin{proposition}
For any $N$, there exists a binary word of length $N$ such that $p_w(n)=n+1$ for $1 \le n \le \lfloor\frac{N}{2}\rfloor$, and $p_w(n)=N-n+1$ for $\lceil \frac{N}{2}\rceil \le n \le N$.
\label{peak}
\end{proposition}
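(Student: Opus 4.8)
The plan is to avoid the Sturmian machinery entirely and instead exhibit one explicit word whose complexity I can read off by hand. Put $a = \lfloor N/2 \rfloor$ and $b = \lceil N/2 \rceil - 1$, so that $a + b + 1 = N$ and $a - b \in \{0,1\}$ (with $a=b$ when $N$ is odd and $a = b+1$ when $N$ is even). I would take
\[
w = 0^{a}\,1\,0^{b},
\]
a single letter $1$ carrying a block of $a$ zeros on its left and $b$ zeros on its right. Morally this is the extreme very low complexity word: its one special letter is pushed as close to the center as the parity of $N$ permits, so the complexity can climb for as long as possible before the length bound $p_w(n) \le N-n+1$ from Theorem \ref{min:th} forces it back down.

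First I would catalogue $Sub_w(n)$ exactly. A length-$n$ factor of $w$ either avoids the unique $1$ or contains it. A factor avoiding the $1$ must be $0^n$, and this occurs precisely when $n \le a$, since the longest run of zeros is the left block of length $a \ge b$. A factor containing the $1$ has the shape $0^i 1 0^j$ with $i + j = n-1$, $0 \le i \le a$, and $0 \le j \le b$; distinct admissible values of $i$ give distinct factors, and such a factor is never equal to $0^n$. Hence the count is exact:
\[
p_w(n) = c_0(n) + \#\{\, i : \max(0,\,n-1-b) \le i \le \min(a,\,n-1)\,\},
\]
where $c_0(n) = 1$ if $n \le a$ and $c_0(n)=0$ otherwise.

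Then I would evaluate this on the two prescribed ranges. For $1 \le n \le a = \lfloor N/2\rfloor$ one has $n-1 \le a-1 \le b$ (using $a \le b+1$), so $c_0(n)=1$ and $i$ ranges over $0 \le i \le n-1$, giving $p_w(n) = 1 + n = n+1$. For $n > a$ one has $c_0(n)=0$, $\min(a,n-1)=a$, and $\max(0,n-1-b)=n-1-b$, so the count is $a-(n-1-b)+1 = (a+b+1)-n+1 = N-n+1$. When $N$ is odd this second range already covers every $n \in [\lceil N/2\rceil, N]$; when $N$ is even the only boundary value $n = \lfloor N/2\rfloor = \lceil N/2\rceil$ falls in the first range, where the two target formulas $n+1$ and $N-n+1$ coincide. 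This gives exactly $p_w(n)=n+1$ for $1 \le n \le \lfloor N/2\rfloor$ and $p_w(n)=N-n+1$ for $\lceil N/2\rceil \le n \le N$, and the degenerate cases $N=1,2$ are checked directly.

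The construction is the entire idea; everything after it is bookkeeping. The one delicate point is the junction at the peak: the two prescribed formulas must agree exactly where the ranges meet, which is why the zeros are split as $\lfloor N/2\rfloor$ and $\lceil N/2\rceil - 1$ rather than evenly. For even $N$ the peak value $N/2+1$ must be attained at the single index $n=N/2$, whereas for odd $N$ it must be attained as a length-one plateau at $n=\lfloor N/2\rfloor$ and $n=\lceil N/2\rceil$; I expect this parity bookkeeping at the boundary to be the only spot that needs genuine care.
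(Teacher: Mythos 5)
Your proof is correct, and your word is in fact the \emph{same} word the paper uses: with $a=\lfloor N/2\rfloor$ and $b=\lceil N/2\rceil-1$, your $0^a 1 0^b$ is exactly the paper's $0^{N/2}10^{N/2-1}$ for even $N$ and $0^{\lfloor N/2\rfloor}10^{\lfloor N/2\rfloor}$ for odd $N$. The difference lies entirely in the verification of the decreasing tail. The paper counts the factors only for $n\le\lfloor N/2\rfloor$ (the same ``all zeros, plus $n$ placements of the $1$'' argument you give), then computes $R_w$ and $K_w$ and invokes the unimodality theorem (Theorem \ref{main}) to conclude that $p_w$ decreases by exactly $1$ from the peak onward; this requires splitting into the cases $R_w=K_w$ (even $N$) and $R_w<K_w$ (odd $N$). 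You instead write down an exact closed-form count of $Sub_w(n)$ valid for every $n$, namely $c_0(n)+\#\{\,i:\max(0,n-1-b)\le i\le\min(a,n-1)\,\}$, which handles both parities uniformly and makes the proposition self-contained, with no appeal to $R_w$, $K_w$, or Theorem \ref{main}. Your route buys elementarity and a single unified computation; the paper's route is shorter given the machinery it has already built, and serves to illustrate how $R_w$ and $K_w$ determine the shape of the complexity sequence, which is a running theme of the paper. Both are complete proofs; your boundary analysis at $n=\lceil N/2\rceil$ (where for even $N$ the two formulas must agree at the single peak index) is exactly the point that needs care, and you handle it correctly.
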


\begin{proof}
We consider two cases:
\begin{enumerate}
\item Let N be even. Consider the word  

\[
w_N =  \underbrace{00\ldots0}_{ \frac{N}{2}-1 \text{ zeroes}} 01 \underbrace{00\ldots0}_{ \frac{N}{2}-1 \text{ zeroes}}.\\
\]

Let $n \le \frac{N}{2}$. Then the distinct subwords of length $n$ are the subword consisting of all 0s, and the subwords that contain a 1. Since the 1 can be in any of the $n$ places, there are $n$ such subwords that contain a 1. So $p_w(n)=n+1$ for all $n \le \frac{N}{2}$.

Now we want to show that $K_w=R_w=\frac{N}{2}$. $K_w=\frac{N}{2}$ because ``100...0" ($\frac{N}{2}-1$ zeroes) is the shortest suffix of multiplicity 1 in $w_N$.  The minimum length $n$ such that all subwords of length $n$ have valence 1 is $n=\frac{N}{2}$. So $R_w = K_w = \frac{N}{2}$.

It follows from Theorem \ref{main} that $p_w$ is decreasing by one on the interval $[\frac{N}{2},N]$, so $p_w(n)=N-n+1$ for all $\frac{N}{2} \le n \le N$. 

\begin{figure}[ht]
\begin{center}
\psfragscanon
\psfrag{1}{$1$}
\psfrag{N}{$N$}
\psfrag{p}{$p_w(n)$}
\psfrag{n}{$n$}
\psfrag{k}{$\frac{N}{2}$}
\includegraphics[scale=.4]{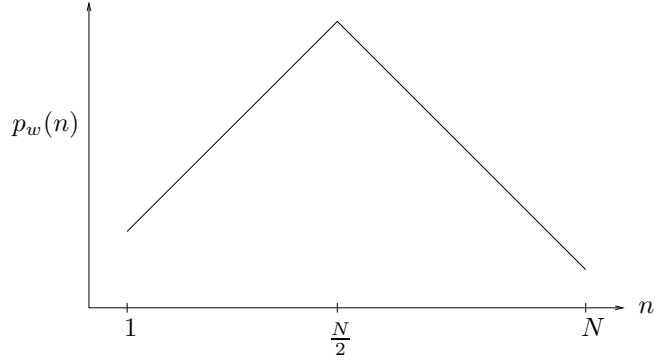}
\caption{Subword complexity sequence for $N$ even.} 
\end{center}
\end{figure}

\item Let $N$ be odd. Consider the word 

\[
w_N =  \underbrace{00\ldots0}_{ \lfloor \frac{N}{2}\rfloor \text{ zeroes}} 1 \underbrace{00\ldots0}_{ \lfloor\frac{N}{2}\rfloor \text{ zeroes}}.\\
\]

Let $n \le \lfloor \frac{N}{2} \rfloor$. Then the distinct subwords of length $n$ are the subword consisting of all 0s, and the subwords that contain a 1. Since the 1 can be in any of the $n$ places, there are $n$ such subwords that contain a 1. So $p_w(n)=n+1$ for all $n \le \lfloor \frac{N}{2}\rfloor$.

Now we want to show that $K_w= \lceil\frac{N}{2}\rceil$ and $R_w=\lfloor\frac{N}{2}\rfloor$. $K_w= \lceil\frac{N}{2}\rceil$ because ``100...0" ($\lfloor\frac{N}{2}\rfloor-1$ zeroes) is the shortest suffix of multiplicity 1 in $w_N$. The minimum length $n$ such that all subwords of length $n$ have valence 1 is $n=\lfloor\frac{N}{2}\rfloor$. So $R_w=\lfloor\frac{N}{2}\rfloor$ and $K_w= \lceil\frac{N}{2}\rceil$.

Since $R_w < K_w$, it follows from Theorem \ref{main} that $p_w$ is constant on the interval $[R_w, K_w]$ and $p_w$ is decreasing by one on the interval $[K_w,N]$, so $p_w(n)=N-n+1$ for all $\lceil\frac{N}{2}\rceil \le n \le N$. 

\begin{figure}[ht]
\begin{center}
\psfragscanon
\psfrag{1}{$1$}
\psfrag{N}{$N$}
\psfrag{p}{$p_w(n)$}
\psfrag{n}{$n$}
\psfrag{a}{$\lfloor\frac{N}{2}\rfloor$}
\psfrag{b}{$\lceil \frac{N}{2} \rceil$}
\includegraphics[scale=.4]{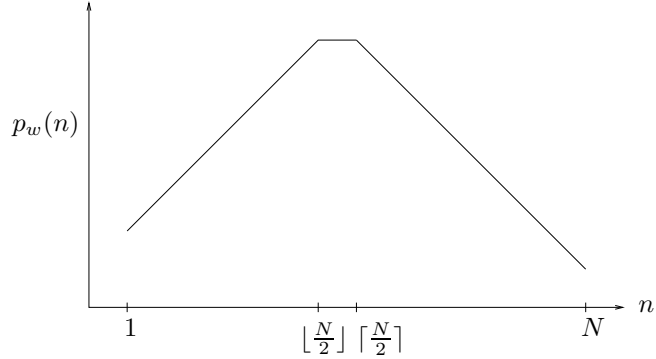}
\caption{Subword complexity sequence for $N$ odd.} 
\end{center}
\end{figure}
\end{enumerate}

Thus for any even $N$, there exists a binary word of length $N$ satisfying  $p_w(n)=n+1$ for $1 \le n \le \lfloor\frac{N}{2}\rfloor$, and $p_w(n)=N-n+1$ for $\lceil \frac{N}{2}\rceil \le n \le N$.

\end{proof}

Obviously the word above is a very low complexity word. We can show that $w_N$ is a prefix of an infinite Sturmian word, and therefore a finite Sturmian word. 

We will show that, for any $l$, the word $u=0^{l+1}10^l$ is a Sturmian word. Consider the Fibonacci word $f$ defined in Example \ref{fib:ex} and the morphism $\psi$ defined by
\begin{align*}
&\psi(0)=0^{l+1}1,\\
&\psi(1)= 0^l1.
\end{align*}

Note that $u$ is a prefix of $\psi(f)$. We will show that $\psi(f)$ is an infinite Sturmian word.

Since $f$ is aperiodic, $\psi(f)$ is aperiodic as well. We need to show that $\psi(f)$ is balanced. Assume, for contradiction, that $\psi(f)$ is unbalanced. Then there exists a subword $v$ of $\psi(f)$ such that both $0v0$ and $1v1$ are subwords of $\psi(f)$. Since $\psi(f)$ consists of $0^l1$ and $0^{l+1}1$ blocks, there exists a subword $z$ of $\psi(f)$ such that $10^l1z10^{l}1$ and $10^{l+1}1z10^{l+1}1$ are subwords of $\psi(f)$. Hence there exists a subword $x$ of $f$, $z=\psi(x)$, such that $1x1$ and $0x0$ are subwords of $f$, a contradiction to $f$ being balanced. Since $\psi(f)$ is both balanced and aperiodic, it follows from Theorem \ref{SBA:th} that $\psi(f)$ is infinite Sturmian. Therefore $u$ is finite Sturmian.

Hence $w_N$, defined in Proposition \ref{peak}, is finite Sturmian.

\bigskip
\section{Conjectures and Open Problems}

\begin{definition}
Let $a_k(n)$ denote the number of distinct subword complexity sequences of length $n \ge 1$ over a $k$-letter alphabet.
\end{definition}

\begin{table}[ht] 
\caption{Number of Distinct Subword Complexity Sequences} 
\centering     
\begin{tabular}{| c | c | c | c | c | c | c | c |}   
\hline\hline                     
n & $a_2(n)$ & $a_3(n)$ & $a_4(n)$ & $a_5(n)$ & $a_6(n)$ & $a_7(n)$ & $a_8(n)$\\ [0.5ex] 
\hline                  
1 & 1 & 1 & 1 & 1 & 1 & 1 & 1\\  \hline 
2 & 2 & 2 & 2 & 2 & 2 & 2 & 2\\ \hline 
3 & 2 & 3  & 3 & 3 & 3 & 3 & 3  \\ \hline 
4 & 3 & 4 & 5 & 5 & 5  & 5 & 5 \\ \hline 
5 & 4 & 6 & 7 & 8 & 8 & 8 & 8\\ \hline 
6 & 5 & 8 & 10 & 11 & 12 & 12 & 12 \\  \hline  
7 & 7 & 12 & 15 & 17 & 18 & 19 & 19\\ \hline 
8 & 9 & 17  & 22 & 25 & 27 & 28 & 29\\ \hline 
9 & 13 & 25 & 33 & 38 & 41 & 43 & 44\\ \hline 
10 & 18 & 37 & 49 & 57 & 62 & 65 & 67\\ \hline 
11 & 25 & 53 & 72 & 84 & 92 & 97 & 100\\   \hline  
12 & 34 & 76 & 105 & 124 & 136 & 144 & 149\\ \hline 
13 & 48 & 109  & 153 & 182 & 201 & 213 & 221\\ \hline 
14 & 67 & 159 & 224 & 268 & 297 & 316 & 328\\ \hline 
15 & 97 & 231 &  330 & 395 & 439 & 468 & 487\\ \hline 
16 & 134 & 336 & 483 &582 & 647 & 691& 720 \\  \hline 
17 & 191 & 485 & 708  &807 &906 & 1053& 1097\\ \hline 
18 & 258 & 690 & 1017  & 1164&1263 & 1362 & 1427\\ \hline 
19 & 374 & 998 & && &&\\ \hline 
20 & 521 & 1434 & & & &&\\ \hline 
21 & 738 & 2057 &  & & &&\\   \hline 
22 & 1024 &  &  & & &&\\ \hline 
23 & 1431 &   &  & & &&\\ \hline 
24 & 1972 &  & & & &&\\ \hline 
25 & 2755 &  & & &  &&\\ \hline 
26 & 3785 &  &  & & &&\\    \hline 
27 & 5244 &  &  & & &&\\ \hline 
28 & 7223 &   & & &  &&\\ \hline 
29 & 9937 &  &  & &&&\\ \hline 
30 & 13545 &  & & &&& \\ [1ex]       
\hline\hline     
\end{tabular} 
\label{table:nonlin}  
\end{table} 

\begin{conjecture}[Enayati and Green]
$a_2(n) \sim 2^{n/2}$.
\label{conj1}
\end{conjecture}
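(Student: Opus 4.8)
\emph{Reduction to ascent profiles.} The plan is to reduce the conjecture to an asymptotic enumeration problem and then attack it both combinatorially and analytically. By Theorem \ref{main}, every binary subword complexity sequence $p_w=(p_w(1),\dots,p_w(N))$ is unimodal and, once it begins to decrease, drops by exactly $1$ until it reaches $1$ at position $N$. Combined with Propositions \ref{Nmax:prop} and \ref{R+K}, this shows the whole sequence is recovered from its restriction to $[1,M]$, where $M=\max\{R_w,K_w\}$: the peak value is $p_w(R_w)=N-M+1$ and the descending tail is forced, while $m=\min\{R_w,K_w\}\le N/2$, so the informative data lives on an interval of length at most $N/2$. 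I would encode a profile by the difference sequence $\delta_n=p_w(n+1)-p_w(n)$, equivalently (via equations (\ref{eq1}) and (\ref{eq2}) in the proof of Theorem \ref{main}) by the special-factor counts $s_w(n,2)$. Then $a_2(N)$ is exactly the number of \emph{realizable} such profiles, and the problem becomes: which difference sequences occur, and how many.

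\emph{The characterization is the crux.} The necessary conditions available so far are far too weak. The value caps $p_w(n)\le\min\{2^n,N-n+1\}$ of Theorem \ref{min:th}, the doubling bound $\delta_n\le 2\delta_{n-1}$ for $n<K_w$ of Theorem \ref{kineq:th}, and the unimodal shape are not close to sufficient: a quick transfer-matrix estimate shows that the number of difference sequences obeying only the doubling bound and a linear budget $\sum_n(\delta_n+1)=O(N)$ already grows like $\varphi^{N}$, where $\varphi=(1+\sqrt5)/2>\sqrt2$. Hence the true rate $\sqrt2$ can only come from strictly finer realizability constraints. I would obtain these from the theory of bispecial factors (factors that are simultaneously left- and right-special): the second difference $s_w(n+1,2)-s_w(n,2)$ is governed by the bilateral multiplicities of the length-$n$ bispecial factors of $w$, and the admissible sign-and-magnitude patterns of this quantity are far more restrictive than the doubling bound. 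Deriving sharp necessary conditions this way, and then proving their \emph{sufficiency} by constructing, for each admissible profile, an explicit binary word realizing it (generalizing the Sturmian construction behind Proposition \ref{peak} and the de Bruijn constructions of Section 4), is the main obstacle — and, as noted in the introduction, precisely the ingredient for which no complete characterization is currently known.

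\emph{Counting and the sharp constant.} Granting such a characterization, I would package the admissible profiles into a transfer operator whose states are the current special-factor count, whose transitions encode the bispecial constraints, and whose steps are weighted by $x^{\delta_n+1}$ to track the budget $N$, yielding a generating function $F(x)=\sum_N a_2(N)\,x^N$. The exponential rate is $1/\rho$, with $\rho$ the dominant singularity of $F$; the target is a meromorphic $F$ with a simple pole at $\rho=2^{-1/2}$, giving rate exactly $\sqrt2=2^{1/2}$, consistent with the per-step ratios near $1.41$ visible in Table \ref{table:nonlin}. To upgrade the rate to the sharp asymptotic $a_2(N)\sim 2^{N/2}$ I would invoke singularity analysis, which reduces the leading constant to the residue of $F$ at $\rho$; the claim $a_2(N)\sim 2^{N/2}$ is then equivalent to that residue being exactly $1$, after separating the even/odd-$N$ behavior inherited from the two cases of Proposition \ref{peak}.

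\emph{Where the difficulty concentrates.} Two steps carry essentially all the risk. First, the sufficiency half of the characterization: the bispecial constraints are so far only conjecturally sufficient, and a construction realizing every admissible profile is exactly the missing piece flagged in the introduction. Second, the sharp constant: proving the ratio tends to $1$, rather than merely $a_2(N)=2^{(1/2+o(1))N}$, demands that the characterization be exact up to a set of profiles of size $o(2^{N/2})$ and that the residue of $F$ at $2^{-1/2}$ be pinned to precisely $1$ — a computation sensitive to every lower-order term, and the place where the conjecture, as opposed to its weaker exponential form, genuinely resides.
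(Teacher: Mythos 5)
There is no proof in the paper to compare against: the statement you were given is Conjecture \ref{conj1}, which the paper leaves open and supports only by the numerical data in Table \ref{table:nonlin} (the introduction explicitly notes that no necessary-and-sufficient characterization of subword complexity sequences is known). Your submission is, by its own account, a research program rather than a proof, and the two pillars it rests on are exactly the open parts. Your reduction step is sound and matches the paper's framework: by Theorem \ref{main} and Propositions \ref{Nmax:prop} and \ref{R+K} the sequence is determined by its values up to $M=\max\{R_w,K_w\}$, with the descending tail forced, and the paper itself remarks (after Proposition \ref{peak}) that only the first $N/2$ values are significant. But from there nothing is established: the ``sharp necessary conditions from bispecial factors'' are asserted, not derived; their sufficiency would require, for every admissible profile, a realizing binary word, and no such construction is given (the de Bruijn and Sturmian constructions of Sections 4--6 realize only the two extreme profiles); and the transfer-operator/singularity-analysis step presupposes the characterization it is meant to count. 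So the gap is not a fixable local step --- it is the entire content of the conjecture, which you have reformulated rather than resolved.

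One further concrete caution: the sharp form with leading constant $1$ is not even cleanly supported by the paper's data, so be wary of treating it as the target your machinery must confirm. The consecutive ratios $a_2(n+1)/a_2(n)$ computable from Table \ref{table:nonlin} hover around $1.36$--$1.40$ for $22\le n\le 30$, below $\sqrt2\approx 1.414$, and $a_2(30)=13545$ against $2^{15}=32768$ gives a ratio near $0.41$ that is still drifting. This is consistent with $a_2(n)=2^{n/2+o(n)}$, or with $a_2(n)\sim c\,2^{n/2}$ for some $c\ne 1$ (possibly parity-dependent, as your even/odd split anticipates), but it means your final step --- pinning the residue of $F$ at $2^{-1/2}$ to exactly $1$ --- may be aiming at a false constant. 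A more defensible intermediate goal within your framework would be the exponential rate $\lim_n a_2(n)^{1/n}=\sqrt2$, with the constant left as a separate question; your own closing paragraph correctly identifies that this weaker statement and the full conjecture are separated by precisely the estimates you do not yet have.
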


The following conjectures come from numerical data.

\begin{conjecture}
$a_k(n) \sim \log_2(k) \times 2^{n/2}$.
\end{conjecture}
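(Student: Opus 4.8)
The plan is to prove the statement by reducing it to the binary case, Conjecture \ref{conj1}, via an asymptotic correspondence that supplies the factor $\log_2(k)$. The first step is structural. By Theorem \ref{main}, every subword complexity sequence of a length-$n$ word is nondecreasing up to a peak at position $M = \max\{R_w, K_w\}$ and thereafter decreases by exactly $1$ at each step down to $1$ at position $n$; by Proposition \ref{Nmax:prop} the peak value is $P = p_w(M) = n - M + 1$, so $P + M = n + 1$. Hence the falling tail is completely forced once the rising segment $(p_w(1), \dots, p_w(M))$ is known, and $a_k(n)$ equals the number of \emph{admissible rising profiles}: nondecreasing integer sequences, strictly increasing on an initial block, starting at $p_w(1) \le k$ and terminating at the value $n - M + 1$, that are realizable as the initial segment of a subword complexity sequence over $\mathcal{A}_k$.

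Next I would translate the profile count into a count of admissible first-difference sequences $d_j = p_w(j+1) - p_w(j)$ and expose the constraints that govern it. In the rising region the differences are nonnegative, they are bounded below by $1$ on the strictly increasing block, and, crucially, they satisfy the multiplicative bound $d_{j+1} \le k\, d_j$ for $j < K_w$ (Theorem \ref{kineq:th}) together with the two ceilings $p_w(j) \le \min\{k^j, n - j + 1\}$ (Theorem \ref{min:th}). A naive count of nondecreasing profiles to a peak near the midpoint behaves like $\binom{n}{n/2} \approx 2^n$; it is precisely the multiplicative constraint $d_{j+1} \le k\, d_j$ — which reflects that each right-special factor of length $j+1$ restricts to a right-special factor of length $j$ in an at most $k$-to-one fashion — that collapses this count to the conjectured order $2^{n/2}$. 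I would make this rigorous through a transfer-matrix or generating-function encoding of admissible difference words, locating the dominant singularity to obtain the growth rate $\sqrt{2}$ per step, and then controlling the polynomial corrections finely enough to extract the constant $1$ in $a_2(n) \sim 2^{n/2}$.

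With the binary asymptotic in hand, the passage to a general alphabet should rest on the observation that the constraints are alphabet-independent \emph{away from} the initial steep climb: the ceiling $k^j$ and the bound $d_{j+1} \le k\, d_j$ bind only while $p_w(j) < k^j$, i.e. on a prefix of length $O(\log_k n)$, whereas the remainder of the profile is governed by the same binary combinatorics for every $k$. This is why the exponential rate $2^{n/2}$ is independent of $k$. The alphabet size survives only as a multiplicative constant counting the distinct admissible initial climbs, and the plan is to show, by a fibrewise comparison of $k$-ary climbs against base-$2$ climbs, that this constant is asymptotically $\log_2(k) = \log k / \log 2$, reflecting that the height of the exponential ramp is measured in units of $\log_k$ rather than $\log_2$. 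Conjecture \ref{conj1} then yields the general statement.

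The hard part is twofold, and it is what keeps the statement a conjecture. First, the paper provides only \emph{necessary} conditions on subword complexity sequences (Propositions \ref{3.1}, \ref{card:prop} and Theorems \ref{min:th}, \ref{kineq:th}, \ref{main}); no exact characterization of which rising profiles are actually realizable by a word is known, so the enumeration above counts a superset of $a_k(n)$, and one must separately prove that the non-realizable profiles are exponentially negligible. Second, even with a faithful model, extracting the exact constants — the leading $1$ in Conjecture \ref{conj1} and the factor $\log_2(k)$ here — demands sharp asymptotics in which the interaction of the multiplicative constraint with the two ceilings generates delicate lower-order terms; showing that these conspire to leave precisely $\log_2(k)$, rather than merely some constant depending on $k$, is the crux of the problem.
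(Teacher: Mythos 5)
There is no proof in the paper for you to match: this statement is presented as a conjecture supported only by the numerical data in Table 1, and the paper explicitly emphasizes that no necessary-and-sufficient characterization of subword complexity sequences is known. Your proposal, as you yourself concede in its final paragraph, is a research program rather than a proof, so the honest verdict is that it has genuine gaps --- the same gaps that keep the statement a conjecture. Concretely: (a) your structural reduction is fine (by Theorem \ref{main} and Proposition \ref{Nmax:prop} the decreasing tail is forced, so $a_k(n)$ is the number of realizable rising profiles), but every constraint you then impose on the profiles --- Proposition \ref{card:prop}, Theorems \ref{min:th} and \ref{kineq:th} --- is merely necessary, so your enumeration bounds $a_k(n)$ from above; you give no mechanism for showing the non-realizable profiles are asymptotically negligible, and deciding realizability is precisely the open problem the paper lists in Section 3. (b) The analytic core --- the transfer-matrix or generating-function argument that is supposed to produce the growth rate $2^{n/2}$ and, worse, the exact constants --- is asserted, not carried out; nothing in the paper's toolkit (or yours) identifies the dominant singularity or shows the rate is $\sqrt{2}$ per step. (c) The factor $\log_2(k)$ is obtained from a heuristic about the length of the ``exponential ramp'' being $O(\log_k n)$ and an undefined ``fibrewise comparison'' of $k$-ary against binary climbs; note also that the constraint $d_{j+1} \le k\,d_j$ of Theorem \ref{kineq:th} is available only for $j < K_w$, not throughout the rising region, so even your claim that the binding constraints are confined to the initial climb needs justification. (d) Finally, the whole scheme conditions on Conjecture \ref{conj1}, which is itself open, so at best you would be reducing one conjecture to another.

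That said, the outline is a sensible way to think about the problem, and it is consistent with the paper's own implicit heuristics: the unimodal shape, the forced tail, and the observation (made after Conjecture \ref{con2}) that alphabet size only matters through the early, steep part of the sequence all point the same way. But identifying a plausible strategy and flagging its obstructions is not a proof, and neither of the two hard steps you name --- faithfulness of the profile model and extraction of the constants --- is resolved here or anywhere in the paper.
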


\begin{table}[ht] 
\caption{Difference in the Number of Distinct Subword Complexity Sequences} 
\centering     
\begin{tabular}{| c | c | c | c | c |}   
\hline\hline                     
n & $a_3(n) - a_2(n)$ & $a_4(n) - a_3(n)$ & $a_5(n) -a_4(n)$ & $a_6(n) - a_5(n)$ \\ [0.5ex] 
\hline                  
1 & 0 & 0 & 0 & 0  \\  \hline 
2 & 0 & 0 & 0 &  0 \\ \hline 
3 & 1 & 0  & 0 & 0    \\ \hline 
4 & 1 & 1 & 0 & 0  \\ \hline 
5 & 2 & 1 & 1 & 0  \\ \hline 
6 & 3 & 2 & 1 & 1  \\  \hline  
7 & 5 & 3 & 2 & 1  \\ \hline 
8 & 8 & 5  & 3 & 2  \\ \hline 
9 & 12 & 8 & 5 & 3 \\ \hline 
10 & 19 & 12 & 8 & 5  \\ \hline 
11 & 28 & 19 & 12 & 8  \\   \hline  
12 & 42 & 29 & 19 & 12  \\ \hline 
13 & 61 & 44  & 29 & 19  \\ \hline 
14 & 92 & 65 & 44 & 29  \\ \hline 
15 & 134 & 99 &  65 & 44 \\ \hline 
16 & 202 & 147 & 99 & 65 \\  \hline 
17 & 294 & 223 &  & \\ \hline 
18 & 432 & 327 &  & \\ \hline 
19 & 624 &  & &  \\ \hline 
20 & 913 & & &  \\  \hline
21 & 1319 & & & \\ [1ex]      
\hline\hline     
\end{tabular} 
\label{dif:tb}  
\end{table}

\begin{conjecture}
There exists a function $f(k)$ such that for $n \le f(k)$: $$a_{k+2}(n)-a_{k+1}(n) = a_{k+1}(n-1)-a_k(n-1).$$
\label{con2}
\end{conjecture}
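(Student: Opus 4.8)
The plan is to grade subword complexity sequences by their first term, reduce the conjecture to a shift identity for this graded count, prove one inequality by an explicit injection, and then isolate the reverse inequality as the genuine difficulty. For the reduction, let $b_j(n)$ denote the number of distinct subword complexity sequences of length $n$ whose first term is $j$; this count is intrinsic to the sequences and does not depend on any alphabet. Since $p_w(1)$ equals the number of distinct letters occurring in $w$, a sequence with first term $j$ is realized only by words using exactly $j$ letters, so it occurs over $\mathcal{A}_k$ precisely when $j\le k$. Hence $a_k(n)=\sum_{j=1}^{k}b_j(n)$, and telescoping gives $a_{k+2}(n)-a_{k+1}(n)=b_{k+2}(n)$ together with $a_{k+1}(n-1)-a_k(n-1)=b_{k+1}(n-1)$. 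The conjecture is therefore equivalent to
\[
b_{k+2}(n)=b_{k+1}(n-1)\qquad\text{for } n\le f(k).
\]

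First I would prove the easy inequality $b_{k+2}(n)\ge b_{k+1}(n-1)$ for every $n$ by prepending a fresh letter. Let $w$ have length $n-1$, use exactly $k+1$ letters, and realize $p$; pick a letter $c$ not occurring in $w$ and put $w'=cw$. Each subword of $w'$ either avoids $c$, and is then a subword of $w$, or begins with $c$, in which case it is the unique length-$m$ word $c\,w[1,m-1]$. Thus $p_{w'}(m)=p(m)+1$ for $1\le m\le n-1$ and $p_{w'}(n)=1$, so $w'$ uses $k+2$ letters and realizes $\Phi(p):=\bigl(p(1)+1,\dots,p(n-1)+1,1\bigr)$. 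The map $\Phi$ sends length-$(n-1)$, first-term-$(k+1)$ sequences to length-$n$, first-term-$(k+2)$ sequences, and is injective because $p$ is recovered by subtracting $1$ and deleting the last entry.

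Everything then rests on showing $\Phi$ is \emph{surjective} for $n\le f(k)$: that every subword complexity sequence $q$ of length $n$ with $q(1)=k+2$ equals $\Phi(p)$ for some $p$, equivalently that the shifted truncation $q'=(q(1)-1,\dots,q(n-1)-1)$ is itself a subword complexity sequence of length $n-1$. By Theorem \ref{main} the candidate $q'$ automatically has the right \emph{shape}: it is unimodal with positive first term $k+1$, and since $q$ comes from a word with at least two letters, Proposition \ref{R+K} gives $\max\{R_w,K_w\}\le n-1$, whence $q(n-1)=2$ and $q'$ ends at $1$, decreasing by $1$ on its descending part. The only remaining way for $q'$ to fail is to violate a growth bound for the smaller alphabet: the cardinality bound $q'(m)\le(k+1)^m$ of Proposition \ref{card:prop} or the slope bound of Theorem \ref{kineq:th}. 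Both are met by $q$ relative to $\mathcal{A}_{k+2}$ but are strictly sharper relative to $\mathcal{A}_{k+1}$, and a violation at level $m$ forces $q(m)$, hence (via $q(m)\le n-m+1$) the length $n$, to be large. I expect $f(k)$ to be exactly the largest $n$ below which no such violation is possible; for $k=2$ the binding constraint is $q'(2)\le 9$, first violable only once $q(2)=11$ is available, i.e. once $n-1\ge 11$, which predicts the breakdown at $n=12$ and the value $f(2)=11$ recorded in Table \ref{dif:tb}, and suggests $f(k)=(k+1)^2+2$ in general.

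The main obstacle is showing that in this range these necessary conditions are also \emph{sufficient}, so that ``$q'$ satisfies the shape and growth bounds'' implies ``$q'$ is realized by a word.'' This is a restricted case of the open characterization problem for subword complexity sequences mentioned in the Introduction, and I expect it to be the hardest step. My plan is to establish the needed sufficiency constructively in the stable regime by generalizing the explicit block words $0^{a}1\,0^{b}$ of Proposition \ref{peak} so as to realize any admissible shape with the prescribed small values; granting such a realization result, the shift $q\leftrightarrow q'$ becomes a bijection exactly below the threshold at which the growth bounds first separate $\mathcal{A}_{k+1}$ from $\mathcal{A}_{k+2}$, simultaneously proving the conjecture and pinning down $f(k)$.
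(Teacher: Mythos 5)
Be aware that the statement you set out to prove is presented in the paper as a \emph{conjecture}: the author gives no proof, only the numerical evidence of Tables 1 and 2 and some heuristic counts of sequences gained when the alphabet grows. So your proposal can only be judged on its own completeness, and it is not complete, though part of it is correct and genuinely sharpens what the paper says. The grading $a_k(n)=\sum_{j=1}^k b_j(n)$ is valid (a sequence with first term $j$ is realizable over $\mathcal{A}_k$ exactly when $j\le k$, since complexity is invariant under renaming letters), the telescoping reduction of the conjecture to $b_{k+2}(n)=b_{k+1}(n-1)$ is correct, and the prepending injection $\Phi$ works: the fresh letter $c$ occurs only at position $1$ of $cw$, so the subwords of length $m$ containing $c$ reduce to the single word $c\,w[1,m-1]$, giving $p_{cw}=(p(1)+1,\dots,p(n-1)+1,1)$. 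This yields the unconditional inequality $a_{k+2}(n)-a_{k+1}(n)\ge a_{k+1}(n-1)-a_k(n-1)$ for \emph{all} $n$, a statement the paper never isolates and which Table \ref{dif:tb} confirms (for $k=2$, $n=12$: $29\ge 28$); your predicted first failure of equality at $n=12$ also matches the data.

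The genuine gap is the surjectivity of $\Phi$, and it is not a loose end but the entire problem. To conclude you must show that $q'=(q(1)-1,\dots,q(n-1)-1)$ is itself a subword complexity sequence whenever $n$ is small, and your argument is: $q'$ has the right unimodal shape (true, by Theorem \ref{main}), it satisfies the known growth bounds (Proposition \ref{card:prop}, Theorem \ref{kineq:th}), and \emph{therefore} it is realizable. That last implication assumes the known necessary conditions are sufficient in the stable regime, which is precisely the open characterization problem the paper flags in the Introduction and in Section 7; nothing in the paper, and nothing in your proposal, supplies it. Your plan to generalize the words $0^{a}10^{b}$ of Proposition \ref{peak} cannot do so: those words realize only the single extreme low-complexity shape, whereas $q'$ ranges over all admissible shapes with first term $k+1$, including near--de Bruijn shapes with $q'(m)$ close to $(k+1)^m$, and no construction in the paper produces a word with a prescribed admissible complexity sequence. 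Even the preliminary claim that $q'$ passes the known tests for small $n$ is unproven: knowing $q$ satisfies Theorem \ref{kineq:th} over $\mathcal{A}_{k+2}$ gives the slope factor $k+2$, not the factor $k+1$ that $q'$ must satisfy, so a separate argument is needed that a violation forces $n$ large. Finally, since $f(k)$ is defined in your proposal only as ``the threshold where the bounds first separate,'' the conjecture's quantifier (existence of $f(k)$ with equality for all $n\le f(k)$) is not established for even one value of $k$; the formula $f(k)=(k+1)^2+2$ is a guess calibrated to a single data point. Keep the reduction and the lower bound; the upper bound is the open problem in disguise.
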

For example, $f(2)=10$. 

Note that for all $n \ge 1$, $i\ge 0$, $a_n(n)=a_{n+i}(n)$, which corresponds to a 0 in Table \ref{dif:tb}. This is because a word of length $n$ can have at most $n$ distinct letters, so adding additional letters to the alphabet will have no effect on the complexity of the word.

Also note that $a_n(n)-a_{n-1}(n)=1$ for all $n$. This is because given a word length $n$, increasing the alphabet size from $n-1$ letters to $n$ letters will only give one new word; the word containing all $n$ distinct letters. That would then give the additional subword complexity sequence $p_w = (n, n-1, n-2, \ldots, 2, 1)$. So there is an increase by 1 in the number of distinct subword complexity sequences.

Similarly, $a_{n-2}(n)-a_{n-3}(n)=2$ for all $n$. Again, given a word of length $n$, increasing the alphabet size from $n-3$ letters to $n-2$ letters will only give new words that contain all $n-2$ letters. So we know $p_w(1)=n-2$, and $p_w(4)=n-3$. Using the unimodality of subword complexity sequences we can deduce the only possible additional subword complexity sequences, and they are: $p_w = (n-2, n-1, n-2, n-3, \ldots, 2, 1)$ and $p_w = (n-2, n-2, n-2, n-3, \ldots, 2, 1)$.

We also have $a_{n-3}(n)-a_{n-4}(n)=3$ for all $n$, since the new subword complexity sequences that would result from increasing the size of the alphabet would be:
$p_w = (n-3, n-3, n-3, n-3, n-4, \ldots,1)$, $p_w = (n-3, n-2, n-2, n-3, n-4, \ldots,1)$ and $p_w = (n-3, n-1, n-2, n-3, n-4, \ldots,1)$.

It is important to note that we cannot continue this method of getting new subword complexity sequences indefinitely. We got the above sequences by using the unimodality of subword complexity sequences. There are no known necessary and sufficient conditions for subword complexity sequences, but we do know some necessary conditions and some sufficient conditions. For example, we need to consider the inequality from Theorem \ref{kineq:th}, that is, $p_w(n+1)-p_w(n) \le k(p_w(n)-p_w(n-1))$ where $k$ is the size of our alphabet, which may eliminate some of the sequences that would result using unimodality.

\section{Acknowledgements} 
I would like to thank Irina Gheorghiciuc for giving me the opportunity to write my masters thesis under her supervision. I am extremely thankful for all of her guidance, patience, and support. I would also like to thank Arda Antikacioglu for his programming help, and the committee members Tim Flaherty and James Cummings for their time and effort.

\clearpage
\bibliographystyle{plain}
\bibliography{References}
\end{document}